\numberwithin{equation}{section}
 \newtheorem{theorem}{Theorem}[section]
\newtheorem{lemma}[theorem]{Lemma}
\newtheorem{corollary}[theorem]{Corollary}
\newtheorem{remark}[theorem]{Remark}
\newtheorem{assumption}[theorem]{Assumption}
\begin{document}
\title{Analysis of nonconforming  IFE methods and a new scheme for elliptic interface problems}
\author{
Haifeng Ji\footnotemark[1]\qquad
Feng Wang\footnotemark[2] \qquad
Jinru Chen\footnotemark[3] \qquad
Zhilin Li\footnotemark[4]
}
\footnotetext[1]{School of Science, Nanjing University of Posts and Telecommunications, Nanjing 210023, China  (hfji@njupt.edu.cn; hfji1988@foxmail.com)}
\footnotetext[2]{Jiangsu Key Laboratory for NSLSCS, School of Mathematical Sciences, Nanjing Normal University, Nanjing 210023, China  (fwang@njnu.edu.cn)}
\footnotetext[3]{Jiangsu Key Laboratory for NSLSCS, School of Mathematical Sciences, Nanjing Normal University, Nanjing 210023, China \& School of Mathematical Sciences, Jiangsu Second Normal University, Nanjing 211200, China  (jrchen@njnu.edu.cn)}
\footnotetext[4]{Department of Mathematics, North Carolina State University, Raleigh, NC 27695, USA  (zhilin@math.ncsu.edu)}

\date{}
\maketitle

\begin{abstract}
In this paper, an important discovery has been found for  nonconforming immersed finite element (IFE) methods using the integral values on edges as degrees of freedom for solving elliptic interface problems. We show that those IFE methods without penalties are not guaranteed to converge optimally if the tangential derivative of the exact solution and the jump of the coefficient are not zero on the interface. A nontrivial counter  example  is also provided to  support our theoretical analysis. 
To recover the optimal convergence rates, we develop a new nonconforming IFE method with additional terms locally on interface edges. The new method is parameter-free which removes the limitation of the conventional partially penalized IFE method. We show the IFE basis functions are unisolvent on arbitrary triangles which is not considered in the literature. Furthermore, different from multipoint Taylor expansions, we derive the optimal approximation capabilities of both the Crouzeix-Raviart and the rotated-$Q_1$ IFE spaces via a unified approach which can handle the case of variable coefficients easily.
Finally, optimal error estimates  in  both $H^1$- and $L^2$- norms are proved and confirmed with  numerical experiments.
\end{abstract}

\textbf{Keywords.} interface problem, nonconforming, immersed finite element, unfitted mesh

\textbf{AMS subject classifications.} 65N15, 65N30, 35R05

\section{Introduction}
Let $\Omega\subset\mathbb{R}^2$  be a convex polygonal domain and $\Gamma$  be a $C^2$-smooth interface immersed in $\Omega$.  Without loss of generality, we assume that  $\Gamma$  divides $\Omega$ into two disjoint sub-domains $\Omega^+$ and $\Omega^-$ such that $\Gamma=\partial\Omega^-$, see Figure~\ref{interfacepb} for an illustration. 
We consider  the following second-order elliptic interface problem
\begin{align}
-\nabla\cdot(\beta(x)\nabla u(x))&=f(x)  \qquad\mbox{in } \Omega\backslash\Gamma,\label{p1.1}\\
[u]_{\Gamma}(x)&=0~~~~ \qquad\mbox{on } \Gamma,\label{p1.2}\\
[\beta\nabla u\cdot \textbf{n}]_{\Gamma}(x)&=0~~~~ \qquad\mbox{on } \Gamma,\label{p1.3}\\
u(x)&=0 ~~~~\qquad\mbox{on } \partial\Omega,\label{p1.4}
\end{align}
where  $\textbf{n}(x)$ is the unit normal vector of the interface $\Gamma$ at point $x\in\Gamma$ pointing toward $\Omega^+$, and the notation $[v]_\Gamma$  is defined as
\begin{equation}
[v]_{\Gamma}:=v^+|_{\Gamma}-v^-|_{\Gamma} ~~\mbox{ with } ~~v^\pm=v|_{\Omega^\pm}
\end{equation}
for a  piecewise smooth function $v$. 
The coefficient $\beta(x)$  can be  discontinuous across the interface $\Gamma$ and is assumed to be piecewise smooth
\begin{align}\label{p1.5}
\beta(x)=\beta^+(x) \mbox{ if } x\in\Omega^+ \mbox{ and } \beta(x)=\beta^-(x) \mbox{ if } x\in\Omega^-,
\end{align}
with $\beta^s(x)\in C^1(\overline{\Omega^s})$, $s=+,-$. We also assume that there exist two positive constants $\beta_{min}$ and $\beta_{max}$  such that $\beta_{min}\leq \beta^s(x)\leq\beta_{max}$ for all $x\in \overline{\Omega^s}$, $s=+,-$.

\begin{figure} [htbp]
\centering
\includegraphics[height=0.3\textwidth]{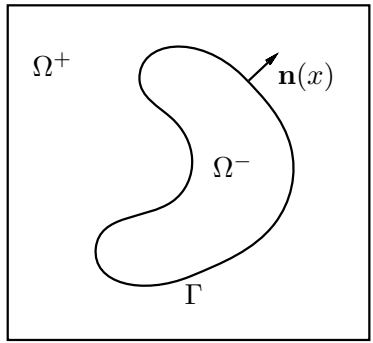}
 \caption{A diagram of the geometry of an interface problem.}\label{interfacepb} %% label for entire figure
\end{figure}

It is well-known that traditional isoparametric finite element methods using an interface-fitted mesh can solve the interface problem with optimal convergence rates, see for example \cite{Li2010Optimal,bramble1996finite,chen1998finite,0xu}. For  complicated interfaces or moving interfaces, unfitted meshes, which are not necessarily aligned with interfaces, have some advantages over  interface-fitted meshes. However, traditional  finite element methods using unfitted meshes only achieve suboptimal convergence rates ($O(h^{1/2})$ in the $H^1$ norm and $O(h)$ in the $L^2$ norm) no matter how high degree of the polynomial is used, see  \cite{babuvska1970finite,2014A}.

The design and analysis of finite element methods on unfitted meshes with optimal convergence rates was started in \cite{Barrett1984,Barrett1987}. Since then, many unfitted mesh finite element methods have been developed (see \cite{hansbo2002unfitted,2012An,2018Robust,multi_CHU2010,GuzmanJSC2017,Li2003new} for a few examples).
 Among these methods,  immersed finite element (IFE) methods \cite{li1998immersed,Li2003new,ygong-li,li2004immersed,he2012convergence,He2008,Chang2011An,Guo2018CMA,2019Non_Lin}  are designed to recover the optimal convergence rates of traditional  finite element methods on unfitted meshes while keeping the degrees of freedom and the structure unchanged. 
The basic idea of IFEs is to modify traditional shape functions on interface elements to satisfy  interface conditions approximately.
However, these modifications are done on each interface element independently, which may cause discontinuities of IFE basis functions across interface edges. 
Even for the $P_1$ conforming IFE method, these discontinuities are not negligible \cite{he2012convergence,taolin2015siam} and the optimal convergence rates cannot be achieved if the discontinuities are not treated appropriately. To overcome the difficulty, Lin et al. \cite{taolin2015siam} proposed a partially penalized IFE method where extra penalty terms at interface edges were added to penalize the discontinuity.  For nonconforming IFE (i.e., a modification to the traditional Crouzeix-Raviart element \cite{crouzeix1973conforming} or  the rotated-$Q_1$ element \cite{rannacher1992simple}), we can choose midpoint values or integral-values on edges as degrees of freedom. If we choose the midpoint values of edges as degrees of freedom, the discontinuities of IFE basis functions are also not negligible and the optimal convergence rates can be obtained by adding penalties (see \cite{zhangphd}). In contrast to the case of  midpoint values  as degrees of freedom,  if the integral-values on edges are used  as the degrees of freedom, then the IFE basis functions have less severe discontinuity across interface edges  since the basis functions  maintain the integral-value continuous \cite{zhangphd, Guo2018CMA}. 
It seems  that this choice of degrees of freedom might overcome the difficulty caused by the discontinuities without using penalties. Extensive numerical examples in the literature \cite{Chang2011An, 2019Non_Lin, zhangphd, Guo2018CMA} support this opinion. However, the rigorous proof is missing and the current research tends to improve the analysis of the related algorithms,
%the research direction seems to be focused on how to prove it, 
as  quoted  in \cite[pp. 96]{zhangphd}: ``How to theoretically prove that the Galerkin IFE scheme with nonconforming rotated $Q_1$ IFE functions using integral-value degrees of freedom does converge optimally is an interesting future research topic.''

In this paper, we show that those nonconforming IFE methods using the integral-value degrees of freedom are not guaranteed to  converge optimally without penalties unless the tangential derivative of the true solution (i.e., $\nabla u\cdot \mathbf{t}$) or the jump of the coefficient $\beta$ is  zero  on the interface (see Theorem~\ref{the_H1_err} and Remark~\ref{remark_suboptimal}).  Furthermore, to validate our theoretical analysis, a nontrivial counter  example with $\nabla u\cdot \mathbf{t}\not=0$ (see {\em Example}~\ref{ex1}) is constructed to show that nonconforming IFE methods using the integral-value degrees without adding penalties may only achieve suboptimal convergence rates (i.e., $O(h^{1/2})$ in the $H^1$ norm and $O(h)$ in the $L^2$ norm).  Note that it is relatively easy to construct an exact solution to satisfy the homogeneous interface conditions (\ref{p1.2})-(\ref{p1.3}) when the exact solution is a constant along the interface (i.e., $\nabla u\cdot \mathbf{t}=0$). To the best of our knowledge, almost all existing numerical examples in the literature satisfy the condition $\nabla u\cdot \mathbf{t}=0$ on the interface and thus the optimal convergence rates are observed (see \cite{Chang2011An, 2019Non_Lin, zhangphd, Guo2018CMA} for example) which is in agreement with our theoretical analysis (see Theorem~\ref{the_H1_err} in Section~\ref{sec_nonconformingIFE}) in this paper.

To achieve the optimal convergence rates, the natural way is to add penalties on interface edges \cite{zhangphd}.   However, the  symmetric  partially penalized IFE methods proposed in \cite{zhangphd} need a manually chosen parameter which is assumed to be large enough. In \cite{refId0},  Guo et al. analyzed a partially penalized IFE methods for elasticity interface problems and derived a lower bound for the penalty parameter. In this paper, we  develop a parameter-free nonconforming IFE method 
by using  a lifting operator defined locally on interface edges. We consider 
both the Crouzeix-Raviart  and  the rotated-$Q_1$  element for solving interface problems with variable coefficients.  
The method is symmetric and the coercivity is ensured without requiring a sufficiently large parameter.
To avoid integrating on curved regions, we also approximate  the interface by line segments connecting the intersection points of the mesh and the interface.  The optimal error estimates are derived rigorously and  are verified  by numerical experiments.  
  
There are other contributions of this paper. First, we  prove that Crouzeix-Raviart IFE basis functions are unisolvent on arbitrary triangles if  the integral-values on edges are used  as degrees of freedom. 
Our recent study in \cite{2021ji_IFE} shows that, for the IFEs using nodal values as degrees of freedom, the maximum angle condition, 
$\alpha_{max}\leq \pi/2$ on interface triangles, is necessary to ensure the  unisolvence of the basis functions. 
The unisolvence of basis functions on arbitrary triangles is a significant advantage of the nonconforming IFEs using integral-value degrees of freedom over the IFEs using nodal values as degrees of freedom. 
Another contribution is a unified proof of the optimal  interpolation error estimates for both the Crouzeix-Raviart and the rotated-$Q_1$  nonconforming IFE spaces for interface problems with piecewise smooth coefficients. Different from multipoint Taylor expansions \cite{Guo2018CMA,zhangphd}, our proof is based on auxiliary functions constructed on interface elements and some useful inequalities developed by Li et al. in \cite{Li2010Optimal} and by Bramble and King in \cite{James1994A} for estimating errors in the region near the interface.  
The other contribution is a new theoretical result that the interpolation polynomial on one side of the interface  can approximate the extensions of the exact solution optimally on the whole element $T$ no matter how small $T\cap\Omega^+$ or $T\cap\Omega^-$    might be (see Theorem~\ref{chazhi_error} in Section~\ref{sec_capabilities}). The result is useful for proving the optimal interpolation error estimates on interface edges (see (\ref{pro_int2b}) in the proof of Lemma~\ref{ener_app}).

The rest of the paper is organized as follows. In Section~\ref{sec_notation}, we introduce some notations,  assumptions,  and basic lemmas that are needed for the analysis. In Section~\ref{sec_IFEspace}, we describe two nonconforming IFE spaces based on the Crouzeix-Raviart  and the rotated-$Q_1$  elements with  integral-value degrees of freedom and prove the unisolvence of IFE basis functions on arbitrary triangles. Furthermore, optimal approximation capabilities of these nonconforming IFE spaces are derived via a new approach. In Section~\ref{sec_nonconformingIFE}, we analyze  the nonconforming IFE method using the integral-value degrees of freedom without adding penalties. In Section~\ref{sec_method2}, we develop a new nonconforming IFE method and derive optimal error estimates in $H^1$- and $L^2$- norms.  Numerical examples are presented  in Section~\ref{sec_num} to validate our theoretical findings.   We conclude in the last section.

\section{Preliminaries}\label{sec_notation}
Throughout the paper we adopt the standard notation $W^k_p(\Lambda)$ for Sobolev spaces on a domain $\Lambda$ with the norm $\|\cdot\|_{W^k_p(\Lambda)}$ and the seminorm $|\cdot|_{W^k_p(\Lambda)}$. Specially, $W^k_2(\Lambda)$ is denoted by $H^{k}(\Lambda)$ with the norm  $\|\cdot\|_{H^{k}(\Lambda)}$ and the seminorm $|\cdot|_{H^{k}(\Lambda)}$. As usual $H_0^1(\Lambda)=\{v\in H^1(\Lambda) : v=0 \mbox{ on }\partial \Lambda\}$. For a domain $\Lambda$, we define 
$\Lambda^s:=\Lambda\cap \Omega^s$, $s=+,-$ and a space
\begin{equation}\label{def_H2}
\widetilde{H}^2(\Lambda):=\{v\in L^2(\Lambda) : v|_{\Lambda^s} \in H^2(\Lambda^s), s=+,-,~ [v]_{\Gamma\cap\Lambda}=0,~ [\beta\nabla v\cdot \textbf{n}]_{\Gamma\cap\Lambda}=0\}
\end{equation}
when $\Lambda^s\not =\emptyset$, $s=+,-$. Obviously, $\widetilde{H}^2(\Lambda)\subset H^1(\Lambda)$.
The space $\widetilde{H}^2(\Lambda)$ is equipped with the norm $\|\cdot\|_{H^2(\Lambda^+\cup\Lambda^-)}$ and the  semi-norm $|\cdot|_{H^2(\Lambda^+\cup\Lambda^-)}$ satisfying
$$
\|\cdot\|^2_{H^2(\Lambda^+\cup\Lambda^-)}=\|\cdot\|^2_{H^2(\Lambda^+)}+\|\cdot\|^2_{H^2(\Lambda^-)}, \quad|\cdot|^2_{H^2(\Lambda^+\cup\Lambda^-)}=|\cdot|^2_{H^2(\Lambda^+)}+|\cdot|^2_{H^2(\Lambda^-)}.
$$
By integration by parts, we can immediately derive the following weak formulation of  (\ref{p1.1})-(\ref{p1.4}): find $u\in H^1_0(\Omega)$ such that
\begin{equation}\label{weakform}
a(u,v):=\int_{\Omega}\beta(x)\nabla u\cdot\nabla vdx=\int_\Omega fvdx\qquad \forall v\in H_0^1(\Omega).
\end{equation}
We have the following regularity theorem for the weak solution (see \cite{2012Uniform} for the case of piecewise smooth  coefficients and \cite{huang2002some,multi_CHU2010} for the case of piecewise constant coefficients).
\begin{theorem}\label{regular_assumption}
If $f\in L^2(\Omega)$, then (\ref{p1.1})-(\ref{p1.4}) has a unique solution $u\in \widetilde{H}^2(\Omega)$ satisfying the following a priori estimate
\begin{equation}\label{regular}
\|u\|_{H^2(\Omega^+\cup\Omega^-)}\leq C\|f\|_{L^2(\Omega)},
\end{equation}
where $C$ is a positive constant depending only on $\Omega$, $\Gamma$ and $\beta$.
\end{theorem}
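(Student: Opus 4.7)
The plan is to decouple the argument into (i) existence/uniqueness of a weak solution and (ii) piecewise $H^2$ regularity with the a priori bound. For (i), I would apply the Lax--Milgram lemma to the bilinear form $a(\cdot,\cdot)$ in (\ref{weakform}) on $H_0^1(\Omega)$: the bound $\beta_{\min}\le\beta(x)\le\beta_{\max}$ together with Poincar\'e's inequality makes $a$ continuous and coercive, yielding a unique $u\in H_0^1(\Omega)$ with $\|u\|_{H^1(\Omega)}\le C\|f\|_{L^2(\Omega)}$. Testing against arbitrary $v\in C_c^\infty(\Omega^\pm)$ and integrating by parts recovers (\ref{p1.1}) distributionally on each subdomain; the trace $[u]_\Gamma=0$ is automatic because $u\in H^1(\Omega)$, while choosing test functions supported in tubular neighborhoods of $\Gamma$ produces the flux condition (\ref{p1.3}) in $H^{-1/2}(\Gamma)$.

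For (ii), I would combine three localization steps via a finite partition of unity. Interior to each subdomain, standard elliptic regularity with $C^1$ coefficients gives $u^\pm\in H^2_{\mathrm{loc}}(\Omega^\pm)$ with the usual estimate. Near the interface, I would cover $\Gamma$ by finitely many balls $B_i$ and, exploiting the $C^2$-smoothness of $\Gamma$, use a local $C^2$ diffeomorphism to flatten $\Gamma\cap B_i$ onto a segment of the $x_1$-axis; this transforms the problem into a two-phase Dirichlet problem with $C^1$ coefficients on each side and homogeneous transmission conditions across the flattened interface. Since difference quotients in the normal direction are inadmissible across the jump, I would take tangential difference quotients to obtain $H^1$ control of $\partial_{x_1}u^\pm$ on each side, and then read off the second normal derivative from the PDE rewritten as $\beta^\pm\partial_{nn}u^\pm = -f - \beta^\pm\partial_{tt}u^\pm - \nabla\beta^\pm\cdot\nabla u^\pm$. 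This is the main obstacle: one has to show carefully that the tangential difference quotient machinery respects both Dirichlet conditions on $\partial B_i$ and the two transmission conditions across the flat interface, and that the resulting estimate absorbs only the $H^1$ norm of $u$ and the $L^2$ norm of $f$.

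Near $\partial\Omega$, since $\Gamma$ is immersed inside $\Omega$ it stays a positive distance away from the polygonal boundary, so there the problem reduces to a single-phase Dirichlet problem $-\nabla\cdot(\beta^+\nabla u)=f$ with a $C^1$ coefficient on a convex polygonal neighborhood. Convexity is exactly what is needed to invoke Grisvard-type results (as used in \cite{2012Uniform,huang2002some,multi_CHU2010}) to conclude $u\in H^2$ with an estimate in $\|f\|_{L^2}+\|u\|_{H^1}$ up to the corners. Summing the three local estimates through the partition of unity and then absorbing the $\|u\|_{H^1(\Omega)}$ contribution using the bound from step (i) delivers (\ref{regular}). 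The secondary technical points are the corner analysis at $\partial\Omega$ and the globalization of the flattening charts, but both are standard; the crux remains the interface step, where the two jump conditions must be leveraged to recover full $H^2$ control on each side of $\Gamma$ from tangential regularity alone.
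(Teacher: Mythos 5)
The paper does not actually prove Theorem~\ref{regular_assumption}: it is stated as a known result with a pointer to \cite{2012Uniform} for piecewise smooth coefficients and to \cite{huang2002some,multi_CHU2010} for piecewise constant ones. Your proposal reconstructs, in outline, exactly the argument that underlies those references: Lax--Milgram for the weak solution, recovery of the transmission conditions from the variational formulation, interior elliptic regularity, local flattening of the $C^2$ interface followed by tangential difference quotients, recovery of $\partial_{nn}u^\pm$ from the equation using $\beta^\pm\geq\beta_{\min}$, and Grisvard-type $H^2$ regularity near the convex polygonal boundary (which $\Gamma$ does not touch, since $\Gamma=\partial\Omega^-$ is compactly contained in $\Omega$). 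This is sound and is the standard route; what the paper's citation buys is avoiding the genuinely fiddly step you correctly single out, namely verifying that the tangential difference quotients of $u$ solve a perturbed two-phase problem whose transmission conditions (now for the transformed conormal derivative, since flattening turns $\beta^\pm I$ into a full matrix coefficient) are satisfied up to commutator terms controlled by $\|u\|_{H^1}+\|f\|_{L^2}$. If you were to write this out in full you would also need to track that the flattened operator is not $\beta^\pm\Delta$ but a divergence-form operator with a uniformly elliptic $C^1$ matrix on each side, so the identity you use to read off the second normal derivative must be written with that matrix; this changes nothing structurally but is where a careless version of the argument breaks. As a blind reconstruction of a cited result, your sketch is correct and complete at the level of detail one could reasonably expect.
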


 Let $ \{\mathcal{T}_h\}_{h>0}$ be a family of triangular or rectangular subdivisions of $\Omega$ such that no vertex of any element lies in the interior of an edge of another element. The diameter of $T\in\mathcal{T}_h$  is denoted by $h_T$. We define  $h=\max_{T\in\mathcal{T}_h}h_T$ and assume that $\mathcal{T}_h$ is shape regular, i.e., for every $T$, there exists a positive constant $\varrho$ such that  $ h_T\leq \varrho r_T$ where $r_T$ is the diameter of the largest circle inscribed in $T$.  
%
%$r_T\leq|e|\leq h_T$
% 
Denote $\mathcal{E}_h$  as the set of edges of the subdivision, and let $\mathcal{E}^\circ_h$ and $\mathcal{E}^b_h$  be the sets of interior edges and boundary edges. We adopt the convention that elements $T\in\mathcal{T}_h$ and edges $e\in\mathcal{E}_h$ are open sets. Then, the sets of interface elements and interface edges are defined as
\begin{equation*}
\mathcal{T}_h^\Gamma :=\{T\in\mathcal{T}_h :  T\cap \Gamma\not = \emptyset\} \quad\mbox{ and }\quad \mathcal{E}_h^\Gamma:=\{e\in \mathcal{E}_h : e \cap \Gamma\not = \emptyset\}, 
\end{equation*}
and the sets of non-interface elements and non-interface edges are $\mathcal{T}^{non}_h=\mathcal{T}_h\backslash\mathcal{T}_h^{\Gamma}$ and $\mathcal{E}^{non}_h=\mathcal{E}_h\backslash\mathcal{E}_h^{\Gamma}$.
%We assume that $\mathcal{T}_h$ satisfies the following assumption when the mesh size $h$ is small enough.
We can always refine the mesh near the interface to  satisfy the following assumption.
\begin{assumption}\label{assum_2}
The interface $\Gamma$ does not intersect the boundary of any interface element at more than two points. The interface $\Gamma$ does not intersect  the closure $\overline{e}$ for any $e\in\mathcal{E}_h$  at more than one point.
\end{assumption}

The interface $\Gamma$ is approximated by $\Gamma_h$ that is composed of all the line segments connecting the intersection points of the boundaries of interface elements and the interface. In addition, we assume that $\Gamma_h$ divides $\Omega$ into two disjoint sub-domains $\Omega^+_h$ and $\Omega^-_h$ such that $\Gamma_h=\partial \Omega^-_h$.  

Given an  interface element $T\in\mathcal{T}_h^\Gamma$, we denote the intersection points of $\Gamma$ and $\partial T$ by $D$ and $E$. The straight line $DE$ divides $T$ into $T^+_h=T\cap \Omega^+_h$ and $T^-_h=T\cap \Omega^-_h$, see Figure~\ref{interface_ele} for an illustration.

Let $\textbf{n}_h(x)$  be the unit normal vector of $\Gamma_h$ pointing toward $\Omega^+_h$.  The  tangent vector of $\Gamma_h$ can be defined as $\textbf{t}_h(x)=R_{\pi/2}\textbf{n}_h(x)$, where
$R_\alpha$ is  a  rotation matrix 
\begin{equation*}
R_\alpha=
\left[
\begin{aligned}
&\cos\alpha &-\sin\alpha\\
&\sin\alpha  &\cos\alpha
\end{aligned}
\right].
\end{equation*}

Denote $\mbox{dist}(x,\Gamma)$ as  the distance between a point $x$ and the interface $\Gamma$, and  $U(\Gamma,\delta)=\{x\in\mathbb{R}^2: \mbox{dist}(x,\Gamma)< \delta\}$  as the neighborhood of $\Gamma$ of thickness $\delta$. 
Define the meshsize of $\mathcal{T}_h^\Gamma$ by
\begin{equation}\label{hgamma}
h_\Gamma:=\max_{T\in\mathcal{T}_h^\Gamma}h_T.
\end{equation}
It is obvious that $h_\Gamma\leq h$ and $\bigcup_{T\in\mathcal{T}_h^\Gamma} T\subset U(\Gamma,h_\Gamma)$.

We also define a signed distance function $\rho(x)$ with $\rho(x)|_{\Omega^+}=\mbox{dist}(x,\Gamma)$ and $\rho(x)|_{\Omega^-}=-\mbox{dist}(x,\Gamma)$. There exists a constant $\delta_0>0$ such that $\rho(x)$ is well-defined in $U(\Gamma,\delta_0)$ and $\rho(x)\in C^2(U(\Gamma,\delta_0))$ (see \cite{foote1984regularity}).

\begin{assumption}\label{assumption_delta}
We assume that $h_\Gamma<\delta_0$ so that  $\overline{T}\subset U(\Gamma,\delta_0)$ for all $T\in\mathcal{T}_h^\Gamma$.
\end{assumption}

We extend the coefficients $\beta^s(x)$, $s=+,-$ smoothly to slightly larger domains $\Omega_e^s:=\Omega^s\cup U(\Gamma,\delta_0)$, $s=+,-$ such that 
\begin{equation}\label{ext_beta}
\beta^s(x)\in C^1(\overline{\Omega_e^s})~~\mbox{ and }~~ \beta^e_{min}\leq \beta^s(x)\leq\beta^e_{max},~~s=+,-,
\end{equation}
where the constants $\beta^e_{min}$ and $\beta^e_{max}$ depend on $\Gamma$, $\beta^\pm$ and $\delta_0$.
Thus, there exists a constant $C_\beta$ such that
\begin{equation}\label{new_vari_deri}
\|\nabla \beta^s\|_{L^\infty(\overline{\Omega_e^s})}\leq C_\beta,\qquad s=+,-.
\end{equation}
%The coefficients $\beta^s(x)$, $s=+,-$ are smoothly extended to slight larger domains $\Omega_e^s:=\Omega^s\cup U(\Gamma,h)$, $s=+,-$ and these extensions are also denoted by $\beta^s(x)$. Thus we have  
%\begin{equation}\label{ext_beta}
%\beta^s(x)\in C^1(\overline{T}),~ s=+,-, ~\mbox{ for all } T\in\mathcal{T}_h^\Gamma.
%\end{equation}
By using the signed distance function $\rho$, we can evaluate the unit normal and tangent vectors of the interface as 
\begin{equation}\label{n_and_t}
   \textbf{n}(x)=\nabla \rho,~~~~ \textbf{t}(x) =\left(-\frac{\partial\rho}{\partial x_2},\frac{\partial\rho}{\partial x_1}\right)^T,
\end{equation}
which are well-defined in the region $U(\Gamma,\delta_0)$.
We note that the functions $\textbf{n}_h(x)$ and $\textbf{t}_h(x)$ are also viewed as piecewise constant vectors defined on interface elements.
On each interface element $T\in\mathcal{T}_h^\Gamma$, since $\Gamma$ is in  $C^2$, by  Rolle's Theorem, there exists at least one point $x^*\in\Gamma\cap T$, see Figure~\ref{interface_ele},  such that
\begin{equation}\label{rolle}
\textbf{n}(x^*)=\textbf{n}_h(x^*) \quad \mbox{ and }\quad \textbf{t}(x^*)=\textbf{t}_h(x^*).
\end{equation}
Since $\rho(x)\in C^2(U(\Gamma,\delta_0))$, we have
\begin{equation}\label{nt_smooth}
\textbf{n}(x)\in \left(C^1(\overline{T})\right)^2\quad \mbox{ and }\quad\textbf{t}(x)\in \left(C^1(\overline{T})\right)^2\quad \forall T\in\mathcal{T}_h^\Gamma.
\end{equation}
Using  Taylor's expansion  at $x^*$, we further have
\begin{equation}\label{error_nt}
\|\textbf{n}-\textbf{n}_h\|_{L^\infty(T)}\leq Ch_T\quad \mbox{ and }\quad\|\textbf{t}-\textbf{t}_h\|_{L^\infty(T)}\leq Ch_T\quad \forall T\in\mathcal{T}_h^\Gamma.
\end{equation}
The following lemma presents a $\delta$-strip argument that will be used for the error estimate in the region near the interface (see Lemma 2.1 in \cite{Li2010Optimal}).
\begin{lemma}\label{strip}
Let $\delta$ be sufficiently small.  Then it holds  for any $v\in H^1(\Omega)$ that 
\begin{equation*}
\|v\|_{L^2(U(\Gamma,\delta))}\leq C\sqrt{\delta}\,  \| v\|_{H^1(\Omega)}.
\end{equation*}
Furthermore,  if $v|_{\Gamma}=0$, then there holds
\begin{equation*}
\|v\|_{L^2(U(\Gamma,\delta))}\leq C\delta \, \|\nabla v\|_{L^2(U(\Gamma,\delta))}.
\end{equation*}
\end{lemma}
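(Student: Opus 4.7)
The plan is to flatten the tubular neighborhood $U(\Gamma,\delta)$ via the normal coordinate map and then reduce both inequalities to one-dimensional fundamental-theorem-of-calculus estimates along the normal fibres. Since $\rho\in C^2(U(\Gamma,\delta_0))$ and $\Gamma$ is a compact $C^2$ curve sitting in the interior of the bounded polygonal domain $\Omega$, I may, after shrinking $\delta_0$ if necessary, fix an $L_0>0$ for which the map
$$
\Phi:\Gamma\times(-L_0,L_0)\to U(\Gamma,L_0),\qquad \Phi(y,s)=y+s\,\mathbf{n}(y),
$$
is a $C^1$-diffeomorphism with Jacobian $|1-s\kappa(y)|$ pinched between two positive constants, and such that $\overline{U(\Gamma,L_0)}\subset\Omega$. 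I assume throughout that $\delta<L_0/2$.

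For the first inequality, with $y\in\Gamma$ fixed I set $\varphi_y(s):=v(\Phi(y,s))$ and use the identity
$$
\varphi_y(s)=\varphi_y(s_0)-\int_{s}^{s_0}\varphi_y'(t)\,dt,\qquad s\in[-\delta,\delta],\ s_0\in[L_0/2,L_0].
$$
Squaring, applying $(a+b)^2\le 2a^2+2b^2$ together with Cauchy--Schwarz, and averaging over $s_0\in[L_0/2,L_0]$ produces a pointwise bound of the form
$$
|\varphi_y(s)|^2\le C\int_{L_0/2}^{L_0}|\varphi_y(s_0)|^2\,ds_0+C\int_{-L_0}^{L_0}|\varphi_y'(t)|^2\,dt.
$$
Integrating over $s\in[-\delta,\delta]$ contributes a factor $2\delta$ on the right. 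Integrating over $y\in\Gamma$ and reverting the change of variables through $\Phi$ (using the two-sided Jacobian bounds) turns the two right-hand terms into $\|v\|_{L^2(U(\Gamma,L_0))}^2$ and $\|\nabla v\|_{L^2(U(\Gamma,L_0))}^2$ respectively, which are both controlled by $\|v\|_{H^1(\Omega)}^2$, giving $\|v\|_{L^2(U(\Gamma,\delta))}^2\le C\delta\,\|v\|_{H^1(\Omega)}^2$.

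For the second inequality, the hypothesis $v|_\Gamma=0$ (interpreted in the trace sense and justified by the density of $\{w\in C^\infty(\overline\Omega):w|_\Gamma=0\}$ in the corresponding subspace of $H^1(\Omega)$) gives $\varphi_y(0)=0$, so $\varphi_y(s)=\int_0^s\varphi_y'(t)\,dt$ and Cauchy--Schwarz yields $|\varphi_y(s)|^2\le|s|\int_{-\delta}^{\delta}|\varphi_y'(t)|^2\,dt$. Integrating over $s\in[-\delta,\delta]$ produces a factor $\delta^2$, and a final integration over $y\in\Gamma$ followed by the reverse change of variables through $\Phi$ gives $\|v\|_{L^2(U(\Gamma,\delta))}^2\le C\delta^2\,\|\nabla v\|_{L^2(U(\Gamma,\delta))}^2$. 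Crucially, $\varphi_y'(t)=\nabla v(\Phi(y,t))\cdot\mathbf{n}(y)$, so only the gradient (not $v$ itself) appears after unflattening, which is what lets the right-hand side involve $\|\nabla v\|_{L^2(U(\Gamma,\delta))}$ alone and remain localized to the strip.

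The main technical obstacle is establishing the existence of the constant $L_0$ making $\Phi$ a bi-Lipschitz diffeomorphism with $\overline{U(\Gamma,L_0)}\subset\Omega$ and with Jacobian bounded away from zero; this uses compactness of $\Gamma$ and the regularity $\rho\in C^2(U(\Gamma,\delta_0))$. A minor technical point is the density argument underlying $v|_\Gamma=0$ in the second part; the rest is a routine unwinding of tubular coordinates.
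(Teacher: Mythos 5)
Your proof is correct. The paper itself gives no argument for this lemma --- it simply quotes it as Lemma 2.1 of \cite{Li2010Optimal} --- and your tubular-coordinate argument (flattening $U(\Gamma,\delta)$ by $\Phi(y,s)=y+s\,\mathbf{n}(y)$, controlling the Jacobian $|1-s\kappa(y)|$ via compactness of the $C^2$ curve $\Gamma$, and running one-dimensional fundamental-theorem-of-calculus estimates along the normal fibres) is precisely the standard proof of such $\delta$-strip estimates, so there is nothing to reconcile with the paper's presentation. The only points worth being slightly more explicit about are the ones you already flag: absolute continuity of $v\circ\Phi(y,\cdot)$ on almost every fibre (or a density argument in $H^1$), and the identification of $\varphi_y(0)$ with the trace of $v$ on $\Gamma$ in the second estimate; both are routine.
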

Recalling $T^s=T\cap \Omega^s,~ T_h^s=T\cap\Omega_h^s$, $s=+,-$ for all $T\in \mathcal{T}_h^\Gamma$, we define
\begin{equation}\label{tri}
T^\triangle:=(T^-\cap T_h^+)\cup(T^+\cap  T_h^-).
\end{equation}
Since $\Gamma$ is in $C^2$, we have $|T^\triangle|\leq Ch_T^3$.
We shall need the following estimate on the region $T^\triangle$ (see Lemma 2 in \cite{James1994A}).
\begin{lemma}\label{lem_h3}
Assume  that $v\in H^1(T)$ and $T\in\mathcal{T}_h^\Gamma$. Then there is a constant $C$, independent of $h$  and the interface location relative to the mesh, such that
\begin{equation*}
\|v\|_{L^2(T^\triangle)}^2\leq C(h_T^2\|v\|^2_{L^2(\Gamma\cap T)}+h_T^4\|\nabla v\|^2_{L^2(T^\triangle)}).
\end{equation*}
\end{lemma}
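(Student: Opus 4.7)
The plan is to exploit the thinness of $T^\triangle$ in the normal direction to $\Gamma$ and apply a one-dimensional Newton--Leibniz argument along normal fibers emanating from $\Gamma\cap T$. Because $\Gamma_h\cap T$ is the straight chord joining the two points $D,E\in\Gamma\cap\partial T$ of a $C^2$ curve, the standard chord--arc estimate (applied in the region $U(\Gamma,\delta_0)$ where the signed distance $\rho$ is $C^2$) gives $\sup_{x\in T^\triangle}\mbox{dist}(x,\Gamma)\leq Ch_T^2$. Consequently, every point $x\in T^\triangle$ can be written uniquely (for $h_\Gamma$ small enough) as $x=y+s\,\textbf{n}(y)$ with $y\in\Gamma\cap T$ and $|s|\leq Ch_T^2$; this is a $C^1$-diffeomorphism with Jacobian bounded above and below by constants independent of $h$ and of the interface location relative to the mesh, since $\textbf{n}=\nabla\rho$ is $C^1$ on $\overline{T}\subset U(\Gamma,\delta_0)$.

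Next I would split $v$ into its trace on $\Gamma\cap T$ and a normal increment. For $v\in H^1(T)$ (and by density it suffices to argue for smooth $v$), the fundamental theorem of calculus along the normal fiber gives
\begin{equation*}
v(y+s\,\textbf{n}(y))=v(y)+\int_0^s \nabla v(y+t\,\textbf{n}(y))\cdot\textbf{n}(y)\,dt.
\end{equation*}
Squaring and applying the Cauchy--Schwarz inequality on the inner integral yields the pointwise bound
\begin{equation*}
|v(y+s\,\textbf{n}(y))|^2\leq 2|v(y)|^2+2|s|\int_0^{|s|}|\nabla v(y+t\,\textbf{n}(y))|^2\,dt.
\end{equation*}

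Finally I would integrate this bound over $T^\triangle$ by pushing forward through the diffeomorphism $(y,s)\mapsto y+s\textbf{n}(y)$. The first term contributes $C\bigl(\sup|s|\bigr)\|v\|^2_{L^2(\Gamma\cap T)}\leq Ch_T^2\|v\|^2_{L^2(\Gamma\cap T)}$, since the $s$-fiber has length at most $Ch_T^2$. The second term contributes at most $C\,(\sup|s|)^2\|\nabla v\|^2_{L^2(T^\triangle)}\leq Ch_T^4\|\nabla v\|^2_{L^2(T^\triangle)}$, after pulling back the $(y,t)$ variables to the original domain and using the bounded Jacobian. Adding the two estimates gives the claimed inequality with a constant that depends only on $\|\rho\|_{C^2(U(\Gamma,\delta_0))}$ and the shape regularity constant, hence is independent of $h$ and of how $\Gamma$ cuts $T$.

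The main obstacle is justifying the normal-coordinate change of variables uniformly in the interface/mesh configuration. This reduces to checking that the map $(y,s)\mapsto y+s\textbf{n}(y)$ is a bi-Lipschitz diffeomorphism onto a neighborhood of $\Gamma\cap T$ containing $T^\triangle$ with Jacobian pinched between two positive constants; this follows from the smoothness of $\rho$ on $U(\Gamma,\delta_0)$ together with Assumption \ref{assumption_delta}, and is where the independence of $C$ from the interface location is really encoded. Once this diffeomorphism is in place, the rest of the argument is a routine combination of the trace-plus-fundamental-theorem-of-calculus identity and Fubini's theorem.
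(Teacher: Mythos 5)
The paper does not actually write out a proof of this lemma; it simply cites Lemma~2 of Bramble and King \cite{James1994A}, whose argument is the same one you are reconstructing: the mismatch region is $O(h_T^2)$ thin transversally to $\Gamma$, so a one-dimensional fundamental-theorem-of-calculus estimate along transversal fibers, combined with the trace on $\Gamma\cap T$, gives the two terms with weights $h_T^2$ and $h_T^4$. Your proposal is correct in substance and buys a self-contained proof, but it differs from the cited one in the choice of fibers, and that choice creates two points you assert rather than verify. First, for $x\in T^\triangle$ near the endpoints $D$ and $E$, the nearest-point projection onto $\Gamma$ need not land on the arc $\Gamma\cap T$: writing $\Gamma$ locally as a graph over its tangent at $D$, the foot of a point in the cusp at distance $x_0$ from $D$ is displaced by $O(h_T^3)$ and can fall beyond $D$, hence outside $\overline{T}$, where $v\in H^1(T)$ is not even defined; so the identity $v(y+s\,\textbf{n}(y))=v(y)+\int_0^s\nabla v\cdot\textbf{n}\,dt$ is not available for those fibers as written. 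Second, to land the gradient term in $\|\nabla v\|^2_{L^2(T^\triangle)}$ rather than in a possibly larger $Ch_T^2$-tube around $\Gamma\cap T$, you need that each normal fiber from its foot point to $x\in T^\triangle$ stays inside $T^\triangle$; this is true (monotonicity of $\rho$ along the fiber plus $\textbf{n}\cdot\textbf{n}_h=1-O(h_T^2)>0$), but it is exactly the kind of statement a referee would ask you to prove, not assume.

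Both issues are avoided, not just patched, in the Bramble--King version: place the chord $\overline{DE}$ on a coordinate axis, write the arc as the graph of $\phi$ with $\|\phi\|_\infty\leq Ch_T^2$ and $\|\phi'\|_\infty\leq Ch_T$, and integrate along fibers in the fixed direction normal to the chord. Each such fiber runs from a point of $T^\triangle$ to a point of the arc $\Gamma\cap T$ (the curve and the chord meet at $D$ and $E$, so the fibers degenerate gracefully at the endpoints), stays inside $T^\triangle$ by construction, and the surface measure on the arc dominates the coordinate measure since $ds=\sqrt{1+\phi'^2}\,dx\geq dx$. If you want to keep your normal-coordinate formulation, either treat the two $O(h_T^3)$-size cusp caps separately or switch to chord-aligned fibers; otherwise the argument is the standard one and the constants depend only on $\|\rho\|_{C^2(U(\Gamma,\delta_0))}$, as you say.
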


\section{Nonconforming  IFE spaces and their properties}\label{sec_IFEspace}
In this section, we describe nonconforming IFE spaces based on the Crouzeix-Raviart  element  or  the rotated-$Q_1$ element and present their properties. To begin with, we define IFE shape function spaces.
On a non-interface element $T\in\mathcal{T}_h^{non}$,  we use the traditional  shape function space
\begin{equation*}
V_h(T)=\left\{
\begin{aligned}
&\mbox{Span} \{1,x_1,x_2\},\qquad &&\mbox{ for the Crouzeix-Raviart element ($T$ is a triangle), }\\
&\mbox{Span} \{1,x_1,x_2, x_1^2-(\kappa_T x_2)^2\}, &&\mbox{ for the rotated-$Q_1$ element ($T$ is a rectangle),}
\end{aligned}\right.
\end{equation*}
where $\kappa_T=|e_1|/|e_2|$, $e_1$ and $e_2$ are edges of the rectangle and parallel to the $x_1$-axis and the $x_2$-axis, respectively.
On an interface element $T\in\mathcal{T}_h^{\Gamma}$, the  IFE shape function space $S_h(T)$ is defined as the set of the following functions
\begin{equation}\label{shape1}
\phi(x)=\left\{
\begin{aligned}
\phi^+(x)\in V_h(T)\quad \mbox{ if } x=(x_1,x_2)^T\in T_h^+,\\
\phi^-(x)\in V_h(T)\quad \mbox{ if } x=(x_1,x_2)^T\in T_h^-,
\end{aligned}
\right.
\end{equation}
 satisfying
\begin{align}
&[\phi]_{\Gamma_h\cap T}(x)=\phi^+(x)-\phi^-(x)=0\quad \forall x \in \Gamma_h\cap T, \label{shape2.1}\\
&\beta_c^+ (\nabla \phi^+ \cdot \textbf{n}_h)(x_p)-\beta_c^- (\nabla \phi^- \cdot \textbf{n}_h)(x_p)=0, \label{shape2}
\end{align}
where $x_p$ is an arbitrary point on  $\Gamma_h\cap T$ and the constants $\beta^+_c$ and $\beta^-_c$ are chosen such that 
\begin{equation}\label{shape_condi}
\|\beta^s(x)-\beta_c^s\|_{L^\infty(T)}\leq Ch_T,\quad s=+,-.
\end{equation}
Actually, we can choose $\beta^s_c=\beta^s(x_c^s)$ with arbitrary $x_c^s\in T$, $s=+,-$, to satisfy the condition (\ref{shape_condi}) since we know that $\beta^s(x)\in C^1(\overline{T})$, $s=+,-$ from (\ref{ext_beta}).
\begin{remark}\label{remark_cc}
For the Crouzeix-Raviart element, the condition (\ref{shape2.1}) is equivalent to 
$$\phi^+(D)=\phi^-(D),\quad \phi^+(E)=\phi^-(E),$$
since $\phi^s(x)$, $s=+,-$,  are linear functions. 
For the rotated-$Q_1$ element,  we can write $\phi^s(x)$ as 
$$\phi^s(x)=a^s+b^s x_1+c^s x_2+d^s(x_1^2-(\kappa_T x_2)^2), \quad x=(x_1,x_2)^T,~~s=+,-,$$
where $a^s, b^s, c^s, d^s$, $s=+,-$, are constants.  If we define a functional $d: V_h(T)\rightarrow \mathbb{R}$ as
\begin{equation}\label{def_d}
d(\phi^s)= \frac{1}{2}\frac{\partial^2\phi^s}{\partial x_1^2}=\frac{1}{\sqrt{(4\kappa_T^4+4)|T|}}|\phi^s|_{H^2(T)},
\end{equation}
then $d^s=d(\phi^s)$. Similar to Lemma~2.1 in \cite{He2008},  the condition (\ref{shape2.1}) is equivalent to 
$$\phi^+(D)=\phi^-(D),\quad \phi^+(E)=\phi^-(E),\quad d(\phi^+)=d(\phi^-).$$ 
\end{remark}

\begin{remark}
For the Crouzeix-Raviart element, $\beta_c^s \nabla \phi^s \cdot \textbf{n}_h$, $s=+,-$, are constants on the interface element. Thus, the condition (\ref{shape2}) is equivalent to
\begin{equation}\label{cr_n}
\beta_c^+ (\nabla \phi^+ \cdot \textbf{n}_h)(x)=\beta_c^- (\nabla \phi^- \cdot \textbf{n}_h)(x)\qquad \forall x\in\Gamma_h\cap T.
\end{equation}
 However, for the rotated-$Q_1$ element, the relation (\ref{cr_n}) is no longer valid.  In \cite{2019Non_Lin}, the authors weakly enforce the continuity by using the following condition
 \begin{equation*}
 \int_{\Gamma_h\cap T}\beta_c^+ (\nabla \phi^+ \cdot \textbf{n}_h)-\beta_c^- (\nabla \phi^- \cdot \textbf{n}_h)ds=0
 \end{equation*}
 which is a particular case of  (\ref{shape2}) since there exists a point $x_p\in \Gamma_h\cap T$ such that 
  \begin{equation*}
 \int_{\Gamma_h\cap T}(\beta_c^+ \nabla \phi^+-\beta_c^- \nabla \phi^-) \cdot \textbf{n}_hds=|\Gamma_h\cap T|(\beta_c^+ \nabla \phi^+-\beta_c^- \nabla \phi^-)(x_p) \cdot \textbf{n}_h.
 \end{equation*}
\end{remark}

Let $\mathcal{I}=\{1,2,3\}$ for the Crouzeix-Raviart element and $\mathcal{I}=\{1,2,3,4\}$ for the rotated-$Q_1$ element. The degrees of freedom are defined as the mean values over edges
\begin{equation*}
N_i(\phi):=\frac{1}{|e_i|}\int_{e_i}\phi ds,\qquad  i\in\mathcal{I},
\end{equation*}
where $e_i$, $i\in\mathcal{I}$ are edges of the element $T$, and $|e_i|$ denotes the length of the edge $e_i$. 
%\textbf{The immersed finite elements (IFEs)}.
On an interface element $T\in\mathcal{T}_h^\Gamma$,  the  immersed finite element is defined as $(T, S_h(T), \Sigma_T)$ with $\Sigma_T=\{ N_i , i\in\mathcal{I}\}$.

\textbf{The nonconforming IFE spaces}. The nonconforming IFE space $V_h^{\rm IFE}$ is defined as the set of all functions satisfying
\begin{equation*}
\left\{
\begin{aligned}
&\phi|_T \in S_h(T) ~~~~~\forall  T\in\mathcal{T}_h^\Gamma,\\
&\phi|_T \in V_h(T) ~~~~~\forall  T\in\mathcal{T}_h^{non},\\
&\int_e[\phi]_e ds=0~~~~~\forall e\in\mathcal{E}^\circ_h.
\end{aligned}
\right.
\end{equation*}
We also need a space for homogeneous boundary conditions
$$V_{h,0}^{\rm IFE}:=\left \{v\in V_h^{\rm IFE} : \int_evds=0~~ \forall e\in \mathcal{E}_h^b \right \}.$$

\subsection{The unisolvence of IFE basis functions}

It was proved in \cite{Guo2018CMA} that  the function $\phi\in S_h(T)$ is uniquely determined by $N_i(\phi)$, $i=1,2,3,4$ for the rotated-$Q_1$ element, and $i=1,2,3$ for the Crouzeix-Raviart element when the interface element is an  isosceles right triangle. Now we prove that the result is also valid for arbitrary triangles in the following lemma.  Note that for the IFEs using nodal values as degrees of freedom, the maximum angle condition, $\alpha_{max}\leq \pi/2$  on interface triangles, is necessary to ensure the  unisolvence (see \cite{2021ji_IFE}). This property of  the unisolvence of basis functions is one of advantages of nonconforming IFEs compared with the IFEs using nodal values as degrees of freedom.

\begin{figure} [htbp]
\centering
\includegraphics[height=0.25\textwidth]{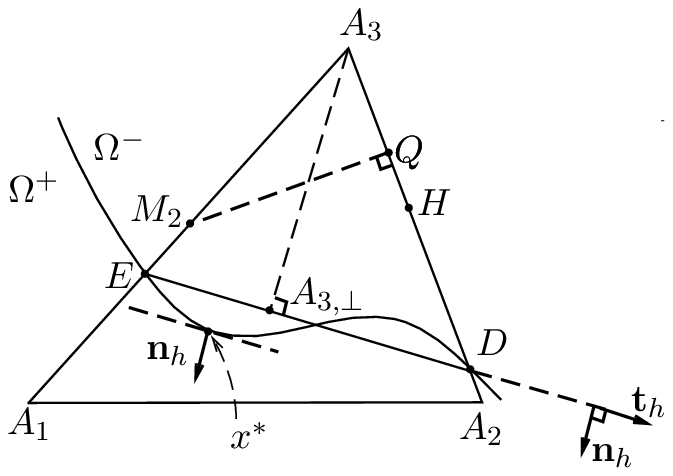}
\quad
\includegraphics[height=0.25\textwidth]{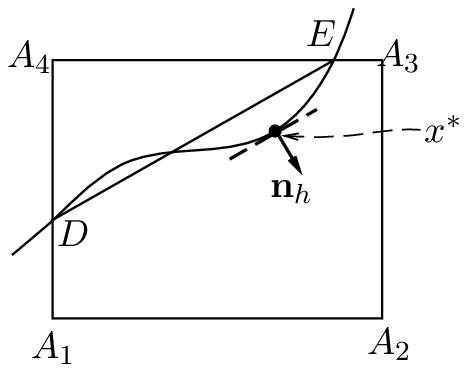}
 \caption{Interface elements.}\label{interface_ele} %% label for entire figure
\end{figure}

\begin{lemma}\label{lem_uniq}
Let $T$ be an arbitrary interface triangle. For the Crouzeix-Raviart element, the function $\phi\in S_h(T)$ is uniquely determined by $N_i(\phi)$, $i=1,2,3$.
\end{lemma}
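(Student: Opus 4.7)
The plan is to show that $N:S_h(T)\to\mathbb{R}^3$, $\phi\mapsto(N_1(\phi),N_2(\phi),N_3(\phi))$, is injective. Suppose $\phi\in S_h(T)$ with $N_i(\phi)=0$ for $i=1,2,3$; I will show $\phi\equiv 0$.

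A preliminary observation is that $\phi$ is globally continuous on $T$: the difference $\phi^+-\phi^-$ is a linear function vanishing at the two distinct points $D,E\in\Gamma_h\cap T$, hence on the entire line through $DE$, so the jump condition (\ref{shape2.1}) extends to the full chord. Consequently $\nabla(\phi^+-\phi^-)$ is parallel to $\textbf{n}_h$, and the scalar
$$\tau:=\nabla\phi^+\cdot\textbf{t}_h=\nabla\phi^-\cdot\textbf{t}_h$$
is well defined.

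The central step is to prove $\tau=0$ using the divergence theorem. Without loss of generality, edges $e_1,e_2$ are cut by $\Gamma_h$ (at $D$ and $E$) and $e_3$ is uncut; write $e_i^\pm:=e_i\cap T_h^\pm$. Applying the divergence theorem to the constant vector field $\phi^\pm\textbf{t}_h$ on each $T_h^\pm$ and using $\textbf{t}_h\cdot\textbf{n}_h=0$ to kill the interface contribution yields
$$|T_h^+|\tau=\sum_{i=1}^{2}(\textbf{t}_h\cdot\textbf{n}_i)\int_{e_i^+}\phi^+\,ds+(\textbf{t}_h\cdot\textbf{n}_3)\int_{e_3}\phi\,ds,\qquad |T_h^-|\tau=\sum_{i=1}^{2}(\textbf{t}_h\cdot\textbf{n}_i)\int_{e_i^-}\phi^-\,ds.$$
The term over $e_3$ vanishes by $N_3(\phi)=0$, and adding the two identities while using $\int_{e_i^+}\phi^++\int_{e_i^-}\phi^-=|e_i|N_i(\phi)=0$ for $i=1,2$ gives $|T|\tau=0$, hence $\tau=0$.

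To conclude, I would apply an energy identity. Integration by parts on each $T_h^s$, together with $\nabla\cdot(\beta_c^s\nabla\phi^s)=0$ (since $\phi^s$ is linear and $\beta_c^s$ is constant), reduces $a_T(\phi,\phi):=\sum_s\beta_c^s|\nabla\phi^s|^2|T_h^s|$ to boundary integrals. The interface contributions cancel by $\phi^+=\phi^-$ on $\Gamma_h\cap T$ and the pointwise flux identity (\ref{cr_n}); the $\partial T$ contributions simplify via $N_i(\phi)=0$ and $\beta_c^+\nabla\phi^+-\beta_c^-\nabla\phi^-=(\beta_c^+-\beta_c^-)\tau\,\textbf{t}_h$, yielding $a_T(\phi,\phi)=(\beta_c^+-\beta_c^-)\tau^2|T_h^+|=0$. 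Coercivity of $a_T$ then forces $\nabla\phi^\pm=0$, so $\phi^\pm$ are constants, equal by continuity at $D$; any $N_i(\phi)=0$ then forces this constant to vanish.

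The subtle point is that the flux jump $\beta_c^+\nabla\phi^+-\beta_c^-\nabla\phi^-$ is tangent to $\Gamma_h$ but generally not tangent to the edges $e_i$, so the usual edge-by-edge cancellation in the standard Crouzeix--Raviart unisolvence argument breaks down. The key observation is that one can still couple the cut-edge contributions through the scalar $\tau$ by two divergence-theorem applications (one on each $T_h^\pm$) that are independent of $\beta_c^\pm$ and whose sum exploits $|T_h^+|+|T_h^-|=|T|$; this is precisely what sidesteps any requirement on the triangle's shape and explains why no maximum angle condition is needed, in contrast to the nodal-value IFE case.
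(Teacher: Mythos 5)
Your proof is correct, but it takes a genuinely different route from the paper. The paper parametrizes $\phi$ explicitly as in (\ref{pro_phi}), eliminates $c_0$ via the flux condition, reduces the problem to the $2\times 2$ rank-one-perturbed system (\ref{unique_shape_eq}), computes $\boldsymbol{\gamma}^T\boldsymbol{\delta}=k_1k_2\in[0,1]$ by explicit trigonometric and projection arguments, and invokes the Sherman--Morrison formula; this yields existence and uniqueness at once, together with the closed-form solution (\ref{soltu}). You instead run a kernel argument: continuity across the full chord $DE$ makes the tangential derivative $\tau$ single-valued, two applications of the divergence theorem (one per subelement, summed using $|T_h^+|+|T_h^-|=|T|$ and $N_i(\phi)=0$) force $\tau=0$, and an energy identity --- whose interface terms cancel by (\ref{shape2.1}) and (\ref{cr_n}) and whose edge terms cancel once the flux jump $(\beta_c^+-\beta_c^-)\tau\,\mathbf{t}_h$ vanishes --- forces $\nabla\phi^\pm=0$ and hence $\phi\equiv 0$. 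This is coordinate-free, avoids all the $\sin\angle A_3$ computations, and makes transparent why no maximum angle condition is needed. Two caveats: (i) injectivity alone is the statement of the lemma, but for the basis functions (\ref{basis}) to exist one also needs surjectivity, which you get for free from the dimension count $\dim S_h(T)\geq 6-3=3$; you should say this. (ii) Your argument is purely qualitative, whereas the paper's explicit formula (\ref{soltu}) combined with the lower bound (\ref{est_max}) is reused in the appendix to prove the uniform bounds (\ref{bounds_basis1}) on the basis functions, with constants independent of the interface location; your approach would need a separate quantitative argument to recover that, so it cannot fully replace the paper's computation in the overall development.
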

\begin{proof}
We follow the argument proposed in \cite{Guo2018CMA,GuoIMA2019}. Consider a  triangle $\triangle A_1A_2A_3$ with edges $e_1=\overline{A_2A_3}$, $e_2=\overline{A_1A_3}$ and $e_3=\overline{A_1A_2}$. The interface $\Gamma$ cuts $e_1$ and $e_2$ at points $D$ and $E$, see Figure~\ref{interface_ele} for an illustration. Without loss of generality, we  assume $T_h^-=\triangle EDA_3$ since the case $T_h^+=\triangle EDA_3$ can be treated by reversing $\beta^+_c$ and $\beta^-_c$.  Let  $\lambda_i(x)$, $i=1,2,3$, be basis functions in $V_h(T)$ such that $\frac{1}{|e_j|}\int_{e_j}\lambda_i(x)dx=\delta_{ij}$ for all $j\in\{1,2,3\}$, where $\delta_{ij}$ is the Kronecker function. Using (\ref{shape2.1}) and $|e_3|^{-1}\int_{e_3}\phi ds=N_3(\phi)$,  we can write the IFE shape function $\phi(x)$ in (\ref{shape1})  as
\begin{equation}\label{pro_phi}
\phi(x)=\left\{
\begin{aligned}
&\phi^+(x)=c_1\lambda_1(x)+c_2\lambda_2(x)+N_3(\phi)\lambda_3(x)\quad &&\mbox{ if } x=(x_1,x_2)^T\in T_h^+,\\
&\phi^-(x)=\phi^+(x)+c_0\textbf{n}_h\cdot \overrightarrow{Dx}\quad &&\mbox{ if } x=(x_1,x_2)^T\in T_h^-,
\end{aligned}
\right.
\end{equation}
where $c_0,c_1,c_2$ are unknowns. Applying the condition (\ref{shape2}),  the unknown $c_0$  can be expressed as 
\begin{equation}\label{exc_0}
c_0=(\beta_c^+/\beta_c^--1)\nabla \phi^+\cdot\textbf{n}_h=(\beta_c^+/\beta_c^--1) (c_1\nabla\lambda_1+c_2\nabla\lambda_2+N_3(\phi)\nabla\lambda_3)\cdot\textbf{n}_h.
\end{equation}
Substituting (\ref{exc_0}) into (\ref{pro_phi}) and using  $N_i(\phi)=|e_i|^{-1}\int_{e_i}\phi ds$, $i=1,2$, we obtain the following linear system of equations for other coefficients
(see \cite{Guo2018CMA,GuoIMA2019} for details),
\begin{equation}\label{unique_shape_eq}
(\mathbf{I}+(\beta^+_c/\beta^-_c-1)\boldsymbol{\delta}\boldsymbol{\gamma}^T)\boldsymbol{c}=\boldsymbol{b},
\end{equation}
where 
\begin{equation}\label{shap_eq}
\begin{aligned}
&\boldsymbol{\delta}=\left(|e_1|^{-1}\int_{\overline{A_3D}}L(x)ds, |e_2|^{-1}\int_{\overline{A_3E}}L(x)ds\right)^T,  ~~L(x)=\textbf{n}_h\cdot\overrightarrow{Dx},\\
&\boldsymbol{c}=(c_1, c_2)^T,~~\boldsymbol{\gamma}=\left(\nabla \lambda_1\cdot\textbf{n}_h, \nabla \lambda_2\cdot\textbf{n}_h\right)^T,\\
&\boldsymbol{b}=\left(N_1(\phi)-\frac{(\beta^+_c/\beta^-_c-1)N_3(\phi)\nabla \lambda_3\cdot\textbf{n}_h}{|e_1|} \int_{\overline{A_3D}}L(x)ds,\right.\\
&~~~~~~~~~~~~\left.N_2(\phi)-\frac{(\beta^+_c/\beta^-_c-1)N_3(\phi)\nabla \lambda_3\cdot\textbf{n}_h}{|e_2|} \int_{\overline{A_3E}}L(x)ds \right)^T.
\end{aligned}
\end{equation}
Set $k_1=|A_3D||e_1|^{-1}$ and $k_2=|A_3E||e_2|^{-1}$. Let $M_i$ be the midpoint of the edge $e_i$, $i=1,2,3$, and $Q$ be  the orthogonal projection of $M_2$ onto the line $A_2A_3$. We can  find out $\boldsymbol{\gamma}(1)$ and $\boldsymbol{\delta}(1)$  as below
\begin{equation*}
\begin{aligned}
\boldsymbol{\gamma}(1)&=\nabla\lambda_1\cdot \textbf{n}_h= |M_2Q|^{-1}\overrightarrow{M_2Q}|M_2Q|^{-1}\cdot \textbf{n}_h=|M_2Q|^{-1}R_{\pi/2}\left(\overrightarrow{M_2Q}|M_2Q|^{-1}\right)\cdot R_{\pi/2}(\textbf{n}_h)\\
&=|M_2Q|^{-1}\overrightarrow{A_2A_3}|A_2A_3|^{-1}\cdot \textbf{t}_h=\left(\frac{1}{2}|e_2|\sin\angle A_3\right)^{-1}|e_1|^{-1}\overrightarrow{A_2A_3}\cdot \textbf{t}_h,
\end{aligned}
\end{equation*}
and
\begin{equation*}
\begin{aligned}
\boldsymbol{\delta}(1)=|e_1|^{-1}\int_{\overline{A_3D}}\textbf{n}_h\cdot\overrightarrow{Dx}ds=|e_1|^{-1}|A_3D|\textbf{n}_h\cdot\overrightarrow{DH}=-\frac{1}{2}k_1|A_3A_{3,\perp}|,
\end{aligned}
\end{equation*}
where $ \angle A_3\in (0,\pi)$, $H$ is the midpoint of the line segment $\overline{A_3D}$, and $A_{3,\perp}$ is the orthogonal projection of $A_3$ onto the line $DE$. 
Thus,
\begin{equation*}
\begin{aligned}
\boldsymbol{\gamma}(1)\boldsymbol{\delta}(1) &=\nabla\lambda_1\cdot \textbf{n}_h|e_1|^{-1}\int_{\overline{A_3D}}\overrightarrow{Dx}ds=-k_1\overrightarrow{A_2A_3}\cdot \textbf{t}_h |A_3A_{3,\perp}|\left(|e_1||e_2|\sin\angle A_3\right)^{-1}\\
&=-\overrightarrow{DA_3}\cdot \textbf{t}_h |A_3A_{3,\perp}|\left(|e_1||e_2|\sin\angle A_3\right)^{-1}.
\end{aligned}
\end{equation*}
Analogously, we have 
\begin{equation*}
\boldsymbol{\gamma}(2)\boldsymbol{\delta}(2)=\nabla\lambda_2\cdot \textbf{n}_h|e_2|^{-1}\int_{\overline{A_3E}}\overrightarrow{Dx}ds=-\overrightarrow{A_3E}\cdot \textbf{t}_h |A_3A_{3,\perp}|\left(|e_1||e_2|\sin\angle A_3\right)^{-1}.
\end{equation*}
Therefore, 
\begin{equation*}
\boldsymbol{\gamma}^T\boldsymbol{\delta}=\overrightarrow{ED}\cdot\textbf{t}_h|A_3A_{3,\perp}|\left(|e_1||e_2|\sin\angle A_3\right)^{-1}=|DE||A_3A_{3,\perp}|\left(|e_1||e_2|\sin\angle A_3\right)^{-1}.
\end{equation*}
As long as  \ $ \angle A_3ED\in (0,\pi)$,  it is true that $|A_3A_{3,\perp}|=k_2|e_2|\sin\angle A_3ED$, which together with  the relations $|DE|\sin^{-1}\angle A_3=k_1|e_1|\sin^{-1}\angle A_3ED$ yields
\begin{equation*}
\boldsymbol{\gamma}^T\boldsymbol{\delta}= k_1|e_1|\left(\sin\angle A_3ED\right)^{-1}k_2|e_2|\left(\sin\angle A_3ED\right)\left(|e_1||e_2|\right)^{-1}=k_1k_2\in [0,1].
\end{equation*}
From the above inequality, we have
\begin{equation}\label{est_max}
1+(\beta^+_c/\beta^-_c-1)\boldsymbol{\gamma}^T\boldsymbol{\delta}\geq \min(1,\beta^+_c/\beta^-_c)\geq\beta_{min}/\beta_{max}>0.
\end{equation}
Hence, by the well-known  Sherman-Morrison formula, the linear system (\ref{unique_shape_eq}) has a unique solution
\begin{equation}\label{soltu}
\boldsymbol{c}=\boldsymbol{b}-\frac{(\beta^+_c/\beta^-_c-1)(\boldsymbol{\gamma}^T\boldsymbol{b})\boldsymbol{\delta}}{1+(\beta^+_c/\beta^-_c-1)\boldsymbol{\gamma}^T\boldsymbol{\delta}},
\end{equation}
 which completes the proof of the lemma. 
\end{proof}

\begin{remark}
In \cite{Guo2018CMA}, the authors also consider the case that the curved interface is not discretized and the interface condition (\ref{p1.3}) is enforced on a point on the exact interface, i.e., 
replacing $T_h^+$ and $T_h^-$ in (\ref{shape1}) by $T^+$ and $T^-$, respectively,  and replacing
(\ref{shape2}) by
$$\beta_c^+ \nabla \phi^+ \cdot \textbf{n}(F)-\beta_c^- \nabla \phi^- \cdot \textbf{n}(F)=0,$$
where $F$ is a point on $\Gamma\cap T$. Thus, the result for $c_0$ in (\ref{exc_0}) involves $\mathbf{n}(F)\cdot\mathbf{n}_h$ (see \cite[Eq. (4.7)]{Guo2018CMA}).  In this paper, we do not consider this case. We rigorously analyze the error caused by discretizing the curved interface by line segments.
\end{remark}

\subsection{Optimal approximation capabilities of IFE spaces}\label{sec_capabilities}
On each element $T\in\mathcal{T}_h$, define a local interpolation operator  $I_{h,T}: W(T)\rightarrow V_h(T)$ such that
\begin{equation*}
N_i(I_{h,T}v)=N_i(v)\quad \forall i\in\mathcal{I},
\end{equation*}
where $W(T)=\{v: N_i(v), i \in\mathcal{I} \mbox{ are well defined}\}$.
Similarly, on each interface element $T\in\mathcal{T}_h^\Gamma$, define  $I^{\rm IFE}_{h,T}: W(T) \rightarrow S_h(T)$ such that
\begin{equation}\label{local_IFE_inter}
N_i(I^{\rm IFE}_{h,T}v)=N_i(v)\quad \forall i\in\mathcal{I}.
\end{equation}
 The global IFE interpolation operator is defined by $I_h^{\rm IFE}: H^1(\Omega)\rightarrow V_h^{\rm IFE}$  such that 
\begin{equation*}
(I_h^{\rm IFE}v)|_{T}=\left\{
\begin{aligned}
&I_{h,T}^{\rm IFE}v\quad&&\mbox{ if } ~T\in\mathcal{T}_h^\Gamma,\\
&I_{h,T} v&&\mbox{ if }~ T\in\mathcal{T}_h^{non}.\\
\end{aligned}\right.
\end{equation*}
%
%Let $V_h$ be the traditional   nonconforming finite element space 
%\begin{equation*}
%V_{h}:=\{v\in L^2(\Omega) : ~ v|_T \in V_h(T), ~\forall  T\in\mathcal{T}_h, ~\int_e[\phi]_e ds=0,~\forall e\in\mathcal{E}_h. \}.
%\end{equation*}
%We also  define an interpolation operator $I_h: C^0(\overline{\Omega})\rightarrow V_h$ such that
%\begin{equation*}
%(I_hv)|_{T}=I_{h,T}v,\quad\forall T\in\mathcal{T}_h.
%\end{equation*}
 For simplicity, define $v^s:=v|_{\Omega^s}$, $s=+,-$ for all $v\in L^2(\Omega)$. With a small ambiguity of notation, given a function $v_h\in S_h(T)$, we define $v_h^s\in V_h(T)$, $s=+,-$ such that
\begin{equation}\label{def_pm}
v_h^s=v_h|_{T_h^s},\quad s=+,-.
\end{equation}
To show that functions in $V_h^{\rm IFE}$ can approximate a function $v$ in $\widetilde{H}^2(\Omega)$ optimally,  we need to  interpolate extensions of $v^s$, $s=+,-$. It is well-known that  (see \cite{Gilbargbook}) for any $v\in \widetilde{H}^2(\Omega)$ there exist extensions $v_E^s\in H^2(\Omega)$, $s=+,-$ such that
\begin{equation}\label{extension}
 v_E^s|_{\Omega^s}=v^s~~\mbox{ and }~~\| v_E^s\|_{H^2(\Omega)}\leq C\|v^s\|_{H^2(\Omega^s)},\quad s=+,-.
\end{equation}

The next two theorems state the optimal approximation properties of the immersed finite element spaces. The proofs are technical and thus are presented in  the appendix. 
\begin{theorem}\label{chazhi_error}
For any $v\in  \widetilde{H}^2(\Omega)$, there exists a constant $C$ independent of $h$ and the interface location relative to the mesh such that
\begin{equation}\label{fuzhu}
\sum_{T\in\mathcal{T}_h^\Gamma}h_T^{2(m-1)}|v_E^s-(I_h^{\rm IFE}v)^s|^2_{H^m(T)}\leq Ch_\Gamma^{2}\|v\|^2_{H^2(\Omega^+\cup\Omega^-)},~~ m=0,1,2,~~s=+,-.
\end{equation}
\end{theorem}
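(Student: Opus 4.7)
The plan is to prove the bound element-by-element on each $T\in\mathcal{T}_h^\Gamma$ by splitting $|v_E^s-(I_h^{\rm IFE}v)^s|_{H^m(T)}^2$ as the sum of the contribution from the natural side $T_h^s$ and that from the opposite side $T_h^{-s}$, and treating the two with different tools. On $T_h^s$, $v_E^s$ and $v$ agree off the symmetric difference $T^\triangle$, so it suffices to control $v-I_h^{\rm IFE}v$ plus a contribution from $T^\triangle$ handled by Lemma \ref{lem_h3}. I would introduce the standard Crouzeix--Raviart (or rotated-$Q_1$) interpolant $\pi_T v_E^s\in V_h(T)$ of the smooth extension and write
\[
v-I_h^{\rm IFE}v=(v-v_E^s)+(v_E^s-\pi_T v_E^s)+(\pi_T v_E^s-I_h^{\rm IFE}v).
\]
The first piece lives on $T^\triangle$ and is bounded by Lemma \ref{lem_h3}. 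The second is the classical nonconforming interpolation error of a function in $H^2(T)$, handled by Bramble--Hilbert. The third piece is a polynomial in $V_h(T)$; its edge moments $N_i(\pi_T v_E^s-I_h^{\rm IFE}v)=N_i(v_E^s-v)$ are supported on $e_i\cap T^\triangle$ and so are small by Lemma \ref{lem_h3}, and the unisolvence Lemma \ref{lem_uniq} together with polynomial norm equivalence on $V_h(T)$ converts the moment bounds into the desired $H^m(T)$ bound.

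On the opposite side $T_h^{-s}$ I would telescope
\[
v_E^s-(I_h^{\rm IFE}v)^s = \underbrace{(v_E^s-v_E^{-s})}_{A}+\underbrace{(v_E^{-s}-(I_h^{\rm IFE}v)^{-s})}_{B}+\underbrace{((I_h^{\rm IFE}v)^{-s}-(I_h^{\rm IFE}v)^s)}_{C}.
\]
Term $B$ is a natural-side error on $T_h^{-s}$ handled by the previous paragraph with $s$ and $-s$ swapped. Terms $A$ and $C$ are each only $O(h_T)$ in $L^2(T)$ and must be bounded jointly. By $[v]_\Gamma=0$, $A\in H^2(T)$ vanishes on $\Gamma\cap T$ along with its tangential derivative. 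By (\ref{shape2.1})--(\ref{shape2}), $C$ is a polynomial in $V_h(T)$ vanishing on the chord $\Gamma_h\cap T$, so in the Crouzeix--Raviart case $C=\alpha\,(\mathbf{n}_h\cdot\overrightarrow{Dx})$ with $\alpha=(\beta_c^{s}/\beta_c^{-s}-1)\,\nabla(I_h^{\rm IFE}v)^s\cdot\mathbf{n}_h$ forced by (\ref{shape2}); the rotated-$Q_1$ case adds an analogous quadratic piece controlled by the functional $d(\cdot)$ of (\ref{def_d}). The essential cancellation is between $A$ and $C$: evaluating $[\beta\nabla v\cdot\mathbf{n}]_\Gamma=0$ at the Rolle point $x^*$ of (\ref{rolle}), where $\mathbf{n}(x^*)=\mathbf{n}_h$, produces a formula for the normal jump of $\nabla v_E^\pm$ that matches the definition of $\alpha$ up to three small discrepancies controlled respectively by (\ref{shape_condi}) (for $\beta_c^\pm$ versus $\beta^\pm(x^*)$), by the natural-side estimate combined with an inverse estimate on $V_h(T)$ (for $\nabla(I_h^{\rm IFE}v)^s\cdot\mathbf{n}_h$ versus $\nabla v_E^s\cdot\mathbf{n}$), and by (\ref{error_nt}) and (\ref{nt_smooth}) (for the difference $\mathbf{n}-\mathbf{n}_h$ and $\Gamma$ versus $\Gamma_h$). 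After these cancellations, $A+C$ and its first derivatives are all $O(h_T)$ on $\Gamma\cap T$, and a Bramble--King / Li-type trace-Poincar\'e argument using Lemma \ref{lem_h3} and Lemma \ref{strip} yields $|A+C|_{H^m(T_h^{-s})}\leq Ch_T^{2-m}\|v\|_{H^2(T^+\cup T^-)}$.

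The main obstacle is precisely this cancellation. The IFE flux condition (\ref{shape2}) is imposed at a single point $x_p$ on the straight chord $\Gamma_h$ while the continuous interface condition lives on the curved $\Gamma$, so both identities have to be transported to the common point $x^*$ and the resulting geometric and coefficient discrepancies absorbed without introducing a factor depending on the cut ratio $|T_h^s|/|T|$. Because $T_h^s$ can be arbitrarily thin, no argument is allowed to rely on polynomial norm equivalence on $T_h^s$; everything has to be phrased on the full element $T$, where shape regularity of $\mathcal{T}_h$ gives uniform constants. This is exactly where the unisolvence on arbitrary triangles (Lemma \ref{lem_uniq}) is indispensable. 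The two side estimates adapt established tools, but this uniform-in-location cancellation is the novelty that makes the polynomial extension $(I_h^{\rm IFE}v)^s$ approximate $v_E^s$ on the entirety of $T$ and not merely on its natural side.
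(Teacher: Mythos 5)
Your route is genuinely different from the paper's (which compares $(I_h^{\rm IFE}v)^s$ on the whole element with the one--sided standard interpolant $I_hv_E^s$ and expands the difference in auxiliary functions $\Psi,\Upsilon_1,\Upsilon_2,\Theta,\phi_i$ whose coefficients are exactly the mismatches $a,b_1,b_2,t,g_i$ in the defining conditions), and your Rolle--point cancellation between $A$ and $C$ is indeed the same mechanism the paper uses to bound $b_1$ and $b_2$. But your natural--side step contains a concrete error that breaks the rate. The moments of your third piece are
$N_i(\pi_Tv_E^s-I_h^{\rm IFE}v)=N_i(v_E^s-v)=\pm|e_i|^{-1}\int_{e_i\cap\Omega^{-s}}\bigl(v_E^s-v_E^{-s}\bigr)\,ds$,
and the integrand is supported on $e_i\cap\Omega^{-s}$, \emph{not} on $e_i\cap T^\triangle$: since $\Gamma$ and the chord $\Gamma_h$ meet $\partial T$ at the same points $D,E$, the set $e_i\cap T^\triangle$ has measure zero, so Lemma~\ref{lem_h3} says nothing about these moments. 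What you must actually estimate is the jump of the two extensions over a full sub--segment of the edge; that jump vanishes only at the single point $e_i\cap\Gamma$, and the honest bound ($|N_i|\leq Ch_T^{1/2}\|\nabla(v_E^+-v_E^-)\|_{L^2(e_i)}$, then a trace inequality to $T$ and the first part of Lemma~\ref{strip}) gives only $\sum_{T\in\mathcal{T}_h^\Gamma}\sum_i|N_i|^2\leq Ch_\Gamma\|v\|^2_{H^2(\Omega^+\cup\Omega^-)}$ — one full power of $h_\Gamma$ short of the $Ch_\Gamma^2$ required by (\ref{fuzhu}), since after the $h_T^{2(m-1)}$ weighting these moments enter with an $O(1)$ factor. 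The fix is to change the reference object: compare $I_h^{\rm IFE}v$ with the \emph{piecewise} interpolant that interpolates $v_E^+$ on $T_h^+$ and $v_E^-$ on $T_h^-$; then the moment defect becomes $N_i$ of a genuine interpolation error and is $O(h_T)$ pointwise. This is precisely the paper's coefficient $g_i$ in (\ref{fenjie2}) and estimate (\ref{pro_di}).

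Two further remarks. First, once you adopt the two--sided reference interpolant, the opposite--side telescoping $A+B+C$ becomes unnecessary: the paper obtains the bound on all of $T$ in one stroke because the auxiliary functions and the IFE basis functions are uniformly bounded on the whole element, so the delicate joint estimate of $A+C$ (which, as you correctly observe, cannot be done term by term for $m=0$) is replaced by pointwise evaluations of the condition mismatches at $D$ and $x_p$, each reduced to an interpolation error or to a quantity vanishing on $\Gamma$ and then absorbed by the $\delta$--strip argument. Second, "unisolvence plus polynomial norm equivalence on $V_h(T)$" is not quite enough to convert moment bounds into $H^m(T)$ bounds for the piecewise polynomial $(I_h^{\rm IFE}v)^s$ extended to all of $T$: you need the quantitative stability $|\phi_i^s|_{W^m_\infty(T)}\leq Ch_T^{-m}$ of the IFE basis, uniform in the cut location, which the paper extracts from the explicit Sherman--Morrison solution and the lower bound (\ref{est_max}) rather than from unisolvence alone.
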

\begin{proof}
See Appendix~\ref{app_theo1}.
\end{proof}
The above result shows that the interpolation polynomial on one side of the interface  can approximate the extensions of the exact solution optimally on the whole element $T$ no matter how small $T\cap\Omega^+$ or $T\cap\Omega^-$  might be. This is the key in  deriving the optimal error estimates on interface edges; see (\ref{pro_int2b}) in the proof of Lemma~\ref{ener_app}.

Taking into account the mismatch of $\Gamma$ and $\Gamma_h$, we can prove the optimal approximation capabilities of the nonconforming IFE spaces.
\begin{theorem}\label{theorem_interpolation}
For any $v\in  \widetilde{H}^2(\Omega)$, there exists a constant $C$  independent of $h$ and the interface location relative to the mesh such that
\begin{equation}\label{pro_int1a}
\sum_{T\in\mathcal{T}_h}|v-I_h^{\rm IFE}v|^2_{H^m(T)}\leq Ch^{4-2m}\|v\|^2_{H^2(\Omega^+\cup\Omega^-)},\quad m=0,\,1.
\end{equation}
\end{theorem}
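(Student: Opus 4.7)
The plan is to split $\mathcal{T}_h=\mathcal{T}_h^{non}\cup\mathcal{T}_h^\Gamma$, treating non-interface elements by standard Bramble--Hilbert theory and reducing interface elements to Theorem~\ref{chazhi_error} plus a geometric correction on the mismatch region $T^\triangle$. On any $T\in\mathcal{T}_h^{non}$ the condition $T\cap\Gamma=\emptyset$ together with connectedness forces $T\subset\Omega^+$ or $T\subset\Omega^-$, so $v|_T\in H^2(T)$ and $I_h^{\rm IFE}v|_T$ coincides with the ordinary Crouzeix--Raviart or rotated-$Q_1$ interpolant; the classical estimate $|v-I_h^{\rm IFE}v|^2_{H^m(T)}\leq Ch_T^{4-2m}|v|^2_{H^2(T)}$ then sums to the required bound over $\mathcal{T}_h^{non}$.

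On an interface element $T\in\mathcal{T}_h^\Gamma$ I decompose $T$ into the four pieces $T^s\cap T_h^r$ with $r,s\in\{+,-\}$. On the two matched pieces $T^s\cap T_h^s$ the pointwise error is $v_E^s-(I_h^{\rm IFE}v)^s$, whose $H^m$-seminorm over all of $T$ is already controlled by Theorem~\ref{chazhi_error}. On the mismatch set $T^\triangle=(T^+\cap T_h^-)\cup(T^-\cap T_h^+)$ I introduce an intermediate term; on $T^+\cap T_h^-$, for instance,
\begin{equation*}
v-I_h^{\rm IFE}v \;=\; (v_E^+-v_E^-) + \bigl(v_E^- -(I_h^{\rm IFE}v)^-\bigr),
\end{equation*}
so that the second bracket is again bounded by Theorem~\ref{chazhi_error} on $T$ and only the cross term $v_E^+-v_E^-$ on $T^\triangle$ remains to be estimated.

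The cross term is handled by exploiting that $[v]_\Gamma=0$ forces $v_E^+-v_E^-=0$ on $\Gamma\cap T$. For $m=0$, Lemma~\ref{lem_h3} gives $\|v_E^+-v_E^-\|^2_{L^2(T^\triangle)}\leq Ch_T^4\|\nabla(v_E^+-v_E^-)\|^2_{L^2(T^\triangle)}$, and summing over interface elements against the $H^1(\Omega)$-control of $v_E^\pm$ from (\ref{extension}) yields $Ch^4\|v\|^2_{H^2(\Omega^+\cup\Omega^-)}$. For $m=1$, the chord-interpolation property of $\Gamma_h$ and $\Gamma\in C^2$ give $T^\triangle\subset U(\Gamma,Ch_T^2)$ and hence $\bigcup_{T\in\mathcal{T}_h^\Gamma}T^\triangle\subset U(\Gamma,Ch^2)$, so the $\delta$-strip estimate of Lemma~\ref{strip} applied to $\nabla v_E^\pm\in H^1(\Omega)$ with $\delta=Ch^2$ produces $\sum_T\|\nabla(v_E^+-v_E^-)\|^2_{L^2(T^\triangle)}\leq Ch^2\|v\|^2_{H^2(\Omega^+\cup\Omega^-)}$. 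Combining the matched and mismatched contributions yields (\ref{pro_int1a}).

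The main obstacle I anticipate is keeping every hidden constant independent of the interface--mesh configuration: the inclusion $T^\triangle\subset U(\Gamma,Ch_T^2)$ must be verified even when $T$ is cut by $\Gamma$ in a nearly degenerate way, and the applications of Lemma~\ref{lem_h3} and Lemma~\ref{strip} should not covertly import any dependence on the cut geometry. No small-cut exclusion or maximum-angle restriction is imposed, in contrast to the nodal-value IFE setting.
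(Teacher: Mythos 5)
Your proposal is correct and follows essentially the same route as the paper: the same split into $\mathcal{T}_h^{non}$ and $\mathcal{T}_h^\Gamma$, the same four-piece decomposition of an interface element, the same reduction of the matched pieces to Theorem~\ref{chazhi_error}, and the same intermediate term $v_E^+-v_E^-$ on $T^\triangle$ handled via $[\![v_E^\pm]\!]=0$ on $\Gamma\cap T$. The one genuine divergence is the $m=1$ cross term: the paper applies Lemma~\ref{lem_h3} componentwise to $\nabla[\![v_E^\pm]\!]$, which leaves a boundary term $h_T^2\|\nabla v_E^s\|^2_{L^2(\Gamma\cap T)}$ that is then absorbed by a global trace inequality on $\Gamma$, whereas you invoke the inclusion $\bigcup_{T}T^\triangle\subset U(\Gamma,Ch^2)$ (valid since the chord $\overline{DE}$ deviates from the $C^2$ curve by $O(h_T^2)$, uniformly in the cut position) and then the first part of Lemma~\ref{strip} with $\delta\sim h^2$ applied to $\nabla(v_E^+-v_E^-)\in (H^1(\Omega))^2$. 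Both yield the required $Ch^2\|v\|^2_{H^2(\Omega^+\cup\Omega^-)}$; your variant avoids the trace term on $\Gamma$ at the cost of the explicit geometric inclusion, while the paper's variant delegates all geometry to Lemma~\ref{lem_h3}. The obstacle you flag (constants independent of the cut configuration) is indeed the only thing to check, and it holds because the $O(h_T^2)$ chord-to-curve bound depends only on the curvature bound of $\Gamma$, not on how $T$ is cut.
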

\begin{proof}
See Appendix~\ref{app_theo2}.
\end{proof}

\section{Analysis of the nonconforming IFE method without penalties}\label{sec_nonconformingIFE}

In this section, we analyze the nonconforming IFE method without penalties  which is obtained from (\ref{weakform}) by simply replacing the Sobolev space $H_0^1(\Omega)$  with the nonconforming IFE space $V_{h,0}^{\rm IFE}$ (see \cite{Chang2011An,2019Non_Lin}).
%
%This is over of important discovery of this paper. The second major contribution of this paper is that based on our analysis, we have  developed a new nonconforming IFE method that achieve the optimal convergence. 
%
 %The existing nonconforming IFE method is to find $u_h\in V_{h,0}^{\rm IFE}$ such that
 The method reads: find $u_h\in V_{h,0}^{\rm IFE}$ such that
\begin{equation}\label{method1}
a_h(u_h,v_h):=\sum_{T\in\mathcal{T}_h}\int_T\beta(x)\nabla u_h\cdot\nabla v_hdx=\int_\Omega fv_hdx\qquad \forall v_h\in V_{h,0}^{\rm IFE}.
\end{equation}
We will show that the nonconforming IFE method without penalties is not guaranteed to converge optimally unless $[\beta]_\Gamma\nabla u\cdot\textbf{t}=0$ on $\Gamma$.

%
%\subsection{The analysis of \textbf{Method 1}}
It is easy to see that $a_h(\cdot,\cdot)=a(\cdot,\cdot)$ on $H^1_0(\Omega)$ and it is positive-definite on $V_{h,0}^{\rm IFE}$ because $a_h(v_h,v_h)=0$, $v_h\in V_{h,0}^{\rm IFE}$ implies $v_h=0$.  Thus, the discrete problem (\ref{method1}) has a unique solution. We define the energy norm 
$$\|v\|_{a_h}:=\sqrt{a_h(v,v)}\qquad\forall v\in V_{h,0}^{\rm IFE}+H_0^1(\Omega)$$ 
and quote the following well-known second Strang lemma (see Lemma 10.1.9 in \cite{brenner2008mathematical}).
\begin{lemma}\label{lem_strang}
Let $u$ and $u_h$ be the solutions of  (\ref{weakform}) and (\ref{method1}), respectively. Then  
\begin{equation}\label{strang}
\|u-u_h\|_{a_h}\leq C\left\{\inf_{v_h\in V_{0,h}^{\rm IFE}}\|u-v_h\|_{a_h}+\sup_{w_h\in V_{h,0}^{\rm IFE}\backslash\{0\}}\frac{|a_h(u-u_h,w_h)|}{\|w_h\|_{a_h}}\right\}.
\end{equation}
\end{lemma}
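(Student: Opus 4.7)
The plan is to give the standard two-line argument for the second Strang lemma, specialized to the bilinear form $a_h(\cdot,\cdot)$ which is symmetric and positive-definite on $V_{h,0}^{\rm IFE}+H^1_0(\Omega)$ (so the energy norm $\|\cdot\|_{a_h}$ is well-defined there and Cauchy--Schwarz applies to $a_h$). First I would fix an arbitrary $v_h\in V_{h,0}^{\rm IFE}$, write $u-u_h=(u-v_h)+(v_h-u_h)$, and apply the triangle inequality in $\|\cdot\|_{a_h}$ to get
\begin{equation*}
\|u-u_h\|_{a_h}\le \|u-v_h\|_{a_h}+\|v_h-u_h\|_{a_h}.
\end{equation*}
All the real work is to control the second term.

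Next I would exploit that $v_h-u_h\in V_{h,0}^{\rm IFE}$, so setting $w_h:=v_h-u_h$ we can evaluate the energy norm through $a_h$ itself,
\begin{equation*}
\|w_h\|_{a_h}^2=a_h(w_h,w_h)=a_h(v_h-u,w_h)+a_h(u-u_h,w_h).
\end{equation*}
For the first term on the right I would apply the Cauchy--Schwarz inequality for the inner product $a_h$, giving $|a_h(v_h-u,w_h)|\le \|u-v_h\|_{a_h}\|w_h\|_{a_h}$. The second term is precisely the consistency residual already appearing in \eqref{strang}. Assuming $\|w_h\|_{a_h}\neq 0$ (otherwise there is nothing to prove for that term) and dividing by $\|w_h\|_{a_h}$ yields
\begin{equation*}
\|v_h-u_h\|_{a_h}\le \|u-v_h\|_{a_h}+\sup_{0\neq \widetilde w_h\in V_{h,0}^{\rm IFE}}\frac{|a_h(u-u_h,\widetilde w_h)|}{\|\widetilde w_h\|_{a_h}}.
\end{equation*}

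Combining with the triangle-inequality bound above gives \eqref{strang} for the particular $v_h$, and since $v_h\in V_{h,0}^{\rm IFE}$ was arbitrary, I would take the infimum over $v_h$ to finish. The constant $C$ on the right of \eqref{strang} can in fact be taken to be $2$ (or even $1$ by a slightly sharper bookkeeping), but the statement as given only requires some fixed $C$. The only conceptual point to watch is that $u\notin V_{h,0}^{\rm IFE}$ in general, which is the reason for introducing the broken energy norm on $V_{h,0}^{\rm IFE}+H^1_0(\Omega)$ just before the lemma; once that is in hand the argument is purely algebraic and there is no genuine obstacle.
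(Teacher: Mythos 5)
Your argument is correct and is precisely the standard proof of the second Strang lemma; the paper itself does not reprove this statement but simply quotes it from Lemma 10.1.9 of the cited reference, whose proof is the same triangle-inequality-plus-Cauchy--Schwarz argument you give (with $C=2$). The one point worth being explicit about, which you handle adequately, is that $a_h$ is symmetric positive semi-definite on $V_{h,0}^{\rm IFE}+H^1_0(\Omega)$ so Cauchy--Schwarz applies, and positive definite on $V_{h,0}^{\rm IFE}$ so the denominators in the supremum are nonzero.
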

Since $u\in \widetilde{H}^2(\Omega)$,  Theorem~\ref{theorem_interpolation} implies 
\begin{equation}\label{H1_est}
\inf_{v_h\in V_{h,0}^{\rm IFE}}\|u-v_h\|_{a_h}\leq \|u-I_h^{\rm IFE}u\|_{a_h}\leq Ch\|u\|_{H^2(\Omega^+\cup\Omega^-)}.
\end{equation}
For the second term on the right-hand side of (\ref{strang}), we have
\begin{equation*}
\begin{aligned}
a_h(u-u_h,w_h)=\sum_{T\in\mathcal{T}_h}\int_T\beta\nabla u\cdot\nabla w_hdx-\int_\Omega fw_hdx=\sum_{e\in\mathcal{E}_h}\int_{e}\beta\nabla u\cdot\textbf{n}_e [w_h]_{e}ds,
\end{aligned}
\end{equation*}
where the jump $[w_h]_e\textbf{n}_e$ across an edge $e$ is defined as follows. 
Let $e$ be an interior edge shared by two elements $T_1^e$ and $T_2^e$, and $\textbf{n}_e$ the unit normal of $e$ pointing towards the outside of  $T_1^e$. Define 
\begin{equation*}
[w_h]_e\textbf{n}_e=(w_h|_{T_1^e}-w_h|_{T_2^e})\textbf{n}_e \qquad \mbox{ on } e.
\end{equation*}
If $e$ is an edge on the boundary of $\Omega$, then define $[w_h]_e\textbf{n}_e=w_h\textbf{n}_e$,
where $\textbf{n}_e$ is the unit normal of $e$ pointing towards the outside of  $\Omega$. Given an edge $e$ and an element $T$,  define the standard $L^2$ projection operators $P_0^e$ and $P_0^T$ as
\begin{equation*}
P_0^e f=|e|^{-1}\int_efds,\quad P_0^T f=|T|^{-1}\int_Tfdx.
\end{equation*}
It follows from the fact $\int_e[w_h]_eds=0$ that  
\begin{equation}\label{H1_2}
\begin{aligned}
\left|a_h(u-u_h,w_h)\right|=&\left|\sum_{e\in\mathcal{E}_h}\int_{e}\beta\nabla u\cdot\textbf{n}_e [w_h]_{e}ds\right|=\left|\sum_{e\in\mathcal{E}_h}\int_{e}\left(\beta\nabla u\cdot\textbf{n}_e-P_0^e(\beta\nabla u\cdot\textbf{n}_e)\right) [w_h]_{e}ds\right|\\
&\leq \left(\sum_{e\in\mathcal{E}_h}\left\|\beta\nabla  u\cdot  {\rm\mathbf{n}}_e-P_0^e(\beta\nabla  u\cdot  {\rm\mathbf{n}}_e)\right\|^2_{L^2(e)}\right)^{1/2}\left(\sum_{e\in\mathcal{E}_h}\left\| [w_h]_{e}\right \|^2_{L^2(e)} \right)^{1/2}.
\end{aligned}
\end{equation}

Let $\mathcal{T}_h^e=\{T\in\mathcal{T}_h : e\subset\partial T\}$ for all $e\in\mathcal{E}_h$. For any $w_h\in V_h^{\rm IFE}$, using the fact $w_h|_T\in H^1(T)$, we have (see \cite{brenner2008mathematical})
\begin{equation}\label{lem_jumpe}
\left\| [w_h]_{e}\right \|^2_{L^2(e)}\leq C|e|\sum_{T\in \mathcal{T}_h^e}|w_h|^2_{H^1(T)}\quad \forall e\in \mathcal{E}_h.
\end{equation}
Thus, the following estimate holds true:
\begin{equation}\label{H1_3}
\sum_{e\in\mathcal{E}_h}\left\| [w_h]_{e}\right \|^2_{L^2(e)}\leq Ch\|w_h\|^2_{a_h}.
\end{equation}

The next step is to estimate $\left\|\beta\nabla  u\cdot  {\rm\mathbf{n}}_e-P_0^e(\beta\nabla  u\cdot  {\rm\mathbf{n}}_e)\right\|_{L^2(e)}$ in (\ref{H1_2}).  Let $T$ be an element such that $e\subset \partial T$.  
If $e\in\mathcal{E}_h^{non}$ and $T\in\mathcal{T}_h^{non}$, we have the standard estimate
\begin{equation}\label{un1}
\left\|\beta\nabla  u\cdot  {\rm\mathbf{n}}_e-P_0^e(\beta\nabla  u\cdot  {\rm\mathbf{n}}_e)\right\|_{L^2(e)}\leq Ch_T^{1/2}|u|_{H^2(T)}.
\end{equation}
If $e\in\mathcal{E}_h^{non}$ and $T\in\mathcal{T}_h^{\Gamma}$,  the term can be estimated by using the fact that $e\in\Omega^s$, $s=+$ or $-$,
\begin{equation}\label{un2}
\begin{aligned}
&\left\|\beta\nabla  u\cdot  {\rm\mathbf{n}}_e-P_0^e(\beta\nabla  u\cdot  {\rm\mathbf{n}}_e)\right\|_{L^2(e)}= \left\|\beta^s\nabla  u_E^s\cdot  {\rm\mathbf{n}}_e-P_0^e(\beta^s\nabla  u_E^s\cdot  {\rm\mathbf{n}}_e)\right\|_{L^2(e)}\\
&\qquad\qquad\leq Ch_T^{-1/2}\|\beta^s\nabla u_E^s\cdot {\rm\mathbf{n}}_e -P_0^T(\beta^s\nabla  u_E^s\cdot  {\rm\mathbf{n}}_e)\|_{L^2(T)}+Ch_T^{1/2}|\beta\nabla u\cdot {\rm\mathbf{n}}_e |_{H^1(T)}\\
&\qquad\qquad\leq Ch_T^{1/2}|u_E^s|_{H^2(T)}.
\end{aligned}
\end{equation}
Hence, it follows from (\ref{un1})-(\ref{un2})  and the extension result (\ref{extension}) that 
\begin{equation}\label{H1_4}
\sum_{e\in\mathcal{E}_h^{non}}\left\|\beta\nabla  u\cdot  {\rm\mathbf{n}}_e-P_0^e(\beta\nabla  u\cdot  {\rm\mathbf{n}}_e)\right\|^2_{L^2(e)}\leq Ch\sum_{i=+,-}|u_E^s|^2_{H^2(\Omega)}\leq Ch|u|^2_{H^2(\Omega^+\cup \Omega^-)}. 
\end{equation}

For interface edges $e\in\mathcal{E}_h^\Gamma$, we cannot conclude the optimal estimate since  $(\beta\nabla u\cdot \textbf{n}_e)|_e$ may have a jump across $e\cap \Gamma$.  
Noticing that $[u]_\Gamma=0$ implies $[\nabla u \cdot\textbf{t}]_\Gamma=0$,  the jump can be derived as  
\begin{equation}\label{edge_jump}
[\beta\nabla u\cdot \textbf{n}_e]_{\Gamma}=[\beta\nabla u\cdot \textbf{n}]_{\Gamma}\textbf{n}\cdot\textbf{n}_e+[\beta\nabla u\cdot \textbf{t}]_{\Gamma}\textbf{t}\cdot\textbf{n}_e=[\beta]_\Gamma (\nabla u\cdot \textbf{t}) ( \textbf{t} \cdot\textbf{n}_e),
\end{equation}
where we have used 
$$[\beta\nabla u\cdot \textbf{n}]_\Gamma=0~\mbox{ and }~[\beta\nabla u\cdot\textbf{t}]_\Gamma=\frac{1}{2}(\beta^++\beta^-)[\nabla u\cdot\textbf{t}]_\Gamma+[\beta]_\Gamma\nabla u\cdot \textbf{t}=[\beta]_\Gamma\nabla u \cdot\textbf{t}.$$ 
The following lemma gives an estimate on interface edges.

\begin{lemma}\label{lem_edge_est} Let $u$ be the solution of  (\ref{weakform}). Assume the triangulation near the interface is quasi-uniform, i.e., there exists a constant $c$ such that $h_T\geq ch_\Gamma$ for all $T\in\mathcal{T}_h^\Gamma$. Then there holds  
\begin{equation}\label{H1_5}
\sum_{e\in\mathcal{E}_h^\Gamma}\left\|\beta\nabla  u\cdot  {\rm\mathbf{n}}_e-P_0^e(\beta\nabla  u\cdot  {\rm\mathbf{n}}_e)\right\|^2_{L^2(e)}\leq Ch_\Gamma\sum_{T\in\mathcal{T}_h^\Gamma} |u|^2_{H^2(T^+\cup T^-)} +C\left\|[\beta]_\Gamma \nabla u\cdot {\rm\mathbf{t}}\right\|^2_{H^{1/2}(\Gamma)}.
\end{equation}
\end{lemma}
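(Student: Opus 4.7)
The plan is to work edge by edge, splitting the projection error into a ``smooth'' piece controlled by $Ch_T |u|_{H^2}^2$ and a ``jump'' piece whose $e$-sum is controlled by $\|\varphi\|_{H^{1/2}(\Gamma)}^2$ (with $\varphi := [\beta]_\Gamma \nabla u\cdot\mathbf{t}$), up to absorbable $h_\Gamma$-small terms.

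Fix an interface edge $e\subset\partial T$, write $x_e = e\cap\Gamma$ and $e^\pm = e\cap\Omega^\pm$. Using the extensions $u^\pm_E\in H^2(\Omega)$ from (\ref{extension}), define $g := \beta\nabla u\cdot\mathbf{n}_e$, $g^\pm_E := \beta^\pm\nabla u^\pm_E\cdot\mathbf{n}_e\in H^1(\Omega)$, and $\tilde\psi := g^+_E - g^-_E$. Combining (\ref{edge_jump}) with the conservation $[\beta\nabla u\cdot\mathbf{n}]_\Gamma=0$ gives $\tilde\psi|_\Gamma = \varphi(\mathbf{t}\cdot\mathbf{n}_e)$. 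A direct computation produces the identity
\[
P_0^e g \;-\; \tfrac{1}{|e|}\int_e g^\pm_E \;=\; \mp\,\tfrac{1}{|e|}\int_{e^\mp}\tilde\psi,
\]
which, together with the standard trace bound $\|v - P_0^e v\|_{L^2(e)}\le Ch_T^{1/2}|v|_{H^1(T)}$ for $v\in H^1(T)$ and Cauchy--Schwarz on the residual integral, yields the per-edge estimate
\[
\|g - P_0^e g\|_{L^2(e)}^2 \;\le\; Ch_T\bigl(|u^+_E|_{H^2(T)}^2 + |u^-_E|_{H^2(T)}^2\bigr) \;+\; C\|\tilde\psi\|_{L^2(e)}^2.
\]
Summing over $e\in\mathcal{E}_h^\Gamma$ (each $T$ carries $O(1)$ interface edges) and using $h_T\le h_\Gamma$ produces the first term on the right-hand side of (\ref{H1_5}).

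The second task is to show $\sum_e\|\tilde\psi\|_{L^2(e)}^2 \le C\|\varphi\|_{H^{1/2}(\Gamma)}^2 + Ch_\Gamma\bigl(\|u\|_{H^2(\Omega^+\cup\Omega^-)}^2 + \|\varphi\|_{H^{1/2}(\Gamma)}^2\bigr)$. I would introduce a standard $H^1(\Omega)$-lifting $\Psi_0$ of the trace $\varphi\mathbf{t}\in H^{1/2}(\Gamma)^2$ (well-defined because $\mathbf{t}\in C^1$ near $\Gamma$ by (\ref{nt_smooth}) and $\Gamma$ is $C^2$) satisfying $\|\Psi_0\|_{H^1(\Omega)}\le C\|\varphi\|_{H^{1/2}(\Gamma)}$, and then set $\Psi := \beta^+\nabla u^+_E - \beta^-\nabla u^-_E$ and $\Psi_1 := \Psi - \Psi_0$, so $\Psi_1|_\Gamma = 0$ and $\tilde\psi = \Psi_0\cdot\mathbf{n}_e + \Psi_1\cdot\mathbf{n}_e$. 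The trace inequality $\|w\|_{L^2(e)}^2 \le C(h_T^{-1}\|w\|_{L^2(T)}^2 + h_T|w|_{H^1(T)}^2)$ together with quasi-uniformity $h_T\ge ch_\Gamma$ and $\bigcup_{T\in\mathcal{T}_h^\Gamma}T\subset U(\Gamma,h_\Gamma)$ reduces the bound on each summand to norms of $\Psi_i$ on $U(\Gamma,h_\Gamma)$. The first part of Lemma~\ref{strip} applied to $\Psi_0$ gives $\|\Psi_0\|_{L^2(U(\Gamma,h_\Gamma))}^2 \le Ch_\Gamma\|\Psi_0\|_{H^1(\Omega)}^2$, cancelling the $h_\Gamma^{-1}$ from the trace; the second (stronger) part applied to $\Psi_1$, which vanishes on $\Gamma$, yields $\|\Psi_1\|_{L^2(U(\Gamma,h_\Gamma))}^2 \le Ch_\Gamma^2|\Psi_1|_{H^1(\Omega)}^2$, producing one extra $h_\Gamma$ factor that makes the $\Psi_1$-contribution absorbable.

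The main obstacle is precisely this jump-sum estimate: a naive trace bound on $\tilde\psi = \Psi\cdot\mathbf{n}_e$ would only deliver $\|\Psi\|_{H^1}^2\lesssim \|u\|_{H^2}^2$, which is of order $1$ in $h$ and does not recognize the $H^{1/2}(\Gamma)$-regularity of the jump. The decomposition $\Psi = \Psi_0 + \Psi_1$ is what extracts the $H^{1/2}(\Gamma)$-controlled piece and leaves a remainder that vanishes on $\Gamma$; only then do the two parts of Lemma~\ref{strip} fit together to give the asymmetric bound in (\ref{H1_5}), with no $h$ in front of the $\|\varphi\|_{H^{1/2}(\Gamma)}^2$ term (which is exactly the source of the suboptimality identified in Remark~\ref{remark_suboptimal}).
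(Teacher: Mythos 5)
Your proposal is correct, and it rests on the same two pillars as the paper's proof: the identity (\ref{edge_jump}) identifying the cross-edge jump of $\beta\nabla u\cdot\mathbf{n}_e$ as $\varphi(\mathbf{t}\cdot\mathbf{n}_e)$ with $\varphi=[\beta]_\Gamma\nabla u\cdot\mathbf{t}$, a lifting of that $H^{1/2}(\Gamma)$ trace to an $H^1(\Omega)$ function with norm $\leq C\|\varphi\|_{H^{1/2}(\Gamma)}$, and the $\delta$-strip argument of Lemma~\ref{strip} combined with quasi-uniformity to absorb the $h_T^{-1}$ from the element-to-edge trace inequality. The bookkeeping differs, though. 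The paper lifts the scalar $\varphi$ (via an auxiliary problem $-\Delta z^s+z^s=0$, $z^s|_\Gamma=\varphi$), subtracts the one-sided cutoff $\hat w=z(\mathbf{t}\cdot\mathbf{n}_e)\chi_{T^+}$ so that $w-\hat w\in H^1(T)$, and then needs only the first part of Lemma~\ref{strip}; you instead keep the two one-sided extensions $g_E^\pm$ separate, track the projection mismatch through the explicit mean-value identity $P_0^eg-|e|^{-1}\int_e g_E^\pm=\mp|e|^{-1}\int_{e^\mp}\tilde\psi$, and control $\sum_e\|\tilde\psi\|^2_{L^2(e)}$ by splitting $\Psi=\beta^+\nabla u_E^+-\beta^-\nabla u_E^-$ into a lifting $\Psi_0$ of the vector trace $\varphi\mathbf{t}$ plus a remainder $\Psi_1$ vanishing on $\Gamma$, which requires both parts of Lemma~\ref{strip}. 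Your route is slightly longer but yields the same asymmetric bound, at the minor cost that the first term on the right of (\ref{H1_5}) comes out as $Ch_\Gamma\|u\|^2_{H^2(\Omega^+\cup\Omega^-)}$ (full norms of the extensions on whole elements) rather than $Ch_\Gamma\sum_T|u|^2_{H^2(T^+\cup T^-)}$; this is harmless for (\ref{edge_est}) and Theorem~\ref{the_H1_err}. One small point of rigor shared with the paper: for variable $\beta$ the per-edge bound should carry $\|u_E^\pm\|_{H^2(T)}$ rather than the pure seminorm, since $|\beta^\pm\nabla u_E^\pm\cdot\mathbf{n}_e|_{H^1(T)}$ picks up a $\nabla\beta^\pm\cdot$ lower-order contribution via (\ref{new_vari_deri}).
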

\begin{proof}
Define a function $z|_{\Omega^s}=z^s$, $s=+,-$, such that 
\begin{equation*}
\begin{aligned}
&-\Delta z^s+z^s=0 \quad \mbox{ in } \Omega^s,\quad s=+,-,\\
&z^s=[\beta]_\Gamma\nabla u \cdot\textbf{t} ~\mbox{ on }~ \Gamma,~~\quad \frac{\partial z^+}{\partial \nu}=0~\mbox{ on }~\partial \Omega,~~s=+,-,
\end{aligned}
\end{equation*}
where $\nu$ is the outward unit normal vector to $\partial \Omega$. 
Since $[\beta]_\Gamma\nabla u \cdot\textbf{t}\in H^{1/2}(\Gamma)$, the function $z$ exists and satisfies
\begin{equation}\label{edge_ext}
z|_{\Gamma}=[\beta]_\Gamma\nabla u \cdot\textbf{t}~\mbox{ and }~\|z\|_{H^1(\Omega)}\leq C\|[\beta]_\Gamma\nabla u \cdot\textbf{t}\|_{H^{1/2}(\Gamma)}.
\end{equation}
For an edge $e\in\mathcal{E}_h^\Gamma$,  let $w=\beta\nabla  u\cdot  {\rm\mathbf{n}}_e$ and $T$ be an element that has $e$ as one of its edges.  Define 
\begin{equation*}
\widetilde{w}(x)=z(x)\textbf{t}(x)\cdot\textbf{n}_e~~~ \forall x~\in T \quad \mbox{ and  }  \quad
\hat{w}(x)=\left\{
\begin{aligned}
&\widetilde{w}\qquad&&\mbox{ if } x\in T^+,\\
&0&&\mbox{ if } x\in T^-.
\end{aligned}
\right.
\end{equation*}
From (\ref{edge_jump}) and (\ref{edge_ext}), we have $[w-\hat{w}]_{\Gamma\cap T}=0$. Thus, $w-\hat{w}\in H^1(T)$. By the property of the $L^2$ projection operator $P_0^e$ and the standard trace inequality, we  infer
\begin{equation*}
\begin{aligned}
\|w-P_0^e(w)\|^2_{L^2(e)}&\leq \|w-P_0^T(w-\hat{w})\|^2_{L^2(e)}=\|w-\hat{w}+\hat{w}-P_0^T(w-\hat{w})\|^2_{L^2(e)}\\
&\leq 2\|w-\hat{w}-P_0^T(w-\hat{w})\|^2_{L^2(e)}+2\|\widetilde{w}\|^2_{L^2(e)}\\
&\leq C(h_T^{-1}\|w-\hat{w}-P_0^T(w-\hat{w}) \|^2_{L^2(T)}+h_T| w-\hat{w}|^2_{H^1(T)})+2\|z\|^2_{L^2(e)}\\
&\leq C(h_T| w-\hat{w}|^2_{H^1(T)}+h_T^{-1}\|z\|^2_{L^2(T)}+h_T|z|^2_{H^1(T)})\\
&\leq C(h_T| w|^2_{H^1(T^+\cup T^-)}+h_T^{-1}\|z\|^2_{L^2(T)}+h_T|z|^2_{H^1(T)}).
\end{aligned}
\end{equation*}
Summing over all interface edges and using Lemma~\ref{strip}, we get
\begin{equation*}
\begin{aligned}
\sum_{e\in\mathcal{E}_h^\Gamma}&\left\|\beta\nabla  u\cdot  {\rm\mathbf{n}}_e-P_0^e(\beta\nabla  u\cdot  {\rm\mathbf{n}}_e)\right\|^2_{L^2(e)}\leq \sum_{T\in\mathcal{T}_h^\Gamma} C(h_T| w|^2_{H^1(T^+\cup T^-)}+h_T^{-1}\|z\|^2_{L^2(T)}+h_T|z|^2_{H^1(T)})\\
&\qquad\leq Ch_\Gamma\sum_{T\in\mathcal{T}_h^\Gamma}|u|^2_{H^2(T^+\cup T^-)}+Ch_\Gamma^{-1}\|z\|^2_{L^2(U(\Gamma,h_\Gamma))}+Ch_\Gamma|z|^2_{H^1(\Omega)}\\
&\qquad\leq Ch_\Gamma\sum_{T\in\mathcal{T}_h^\Gamma}|u|^2_{H^2(T^+\cup T^-)}+C\|z\|^2_{H^1(\Omega)},
\end{aligned}
\end{equation*}
which together with (\ref{edge_ext}) yields this lemma.
\end{proof}

\begin{remark}\label{remark_suboptimal}
\begin{figure} [htbp]
\centering
\includegraphics[height=0.35\textwidth]{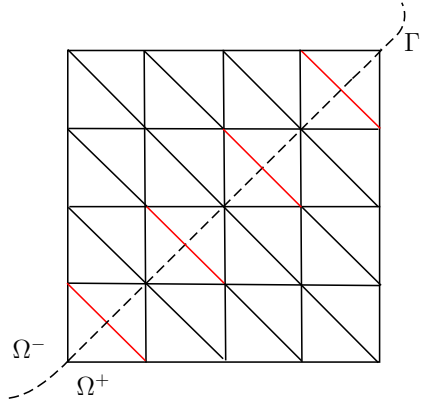}
 \caption{A triangulation of a region $\Lambda$ contained in $\Omega$; red: interface edges.}\label{interface_edge} %% label for entire figure
\end{figure}
The estimate (\ref{H1_5}) is sharp, i.e., we cannot find a better upper bound for the approximation error than $O(1)$ when $ [\beta]_\Gamma \nabla u\cdot {\rm\mathbf{t}} \not=0$ on $\Gamma$. We explain it by a concrete example as illustrated in Figure~\ref{interface_edge}. The domain $\Omega$ contains a region $\Lambda$ such that $\Lambda=(0,1)^2$, $\Lambda^+=\{x=(x_1,x_2)\in\Lambda : x_1>x_2\}$, $\Lambda^-=\{x=(x_1,x_2)\in\Lambda : x_1<x_2\}$. The interface contained in the region is $\Gamma\cap \Lambda=\{x=(x_1,x_2)\in\Lambda : x_1=x_2\}$. We use uniform triangulations as shown in Figure~\ref{interface_edge} and only consider these interface edges which are contained in the region $\Lambda$. Obviously, $\textbf{n}_e=\textbf{t}=(\frac{1}{\sqrt{2}},\frac{1}{\sqrt{2}})^T$ and $\textbf{n}=(\frac{1}{\sqrt{2}},-\frac{1}{\sqrt{2}})^T$. Let $\beta^+=2$, $\beta^-=1$ and the exact solution $u(x_1,x_2)$  be a piecewise linear function on $\Lambda^+$, $\Lambda^-$ such that
\begin{equation*}
\beta^+\nabla u^+\cdot \textbf{n}=\beta^-\nabla u^-\cdot \textbf{n}=1 \quad \mbox{ and }\quad u|_{\Gamma\cap\Lambda}=\frac{1}{\sqrt{2}}(x_1+x_2).
\end{equation*}
Thus, $\nabla u\cdot\textbf{t}=1$ and $(\beta\nabla u\cdot\textbf{n}_e)|_{e^+}=2$, $(\beta\nabla u\cdot\textbf{n}_e)|_{e^-}=1$ for all $e\subset\Lambda$ and $e\in\mathcal{E}_h^\Gamma$. Therefore,
\begin{equation*}
\begin{aligned}
&\left\|\beta\nabla  u\cdot  {\rm\mathbf{n}}_e-P_0^e(\beta\nabla  u\cdot  {\rm\mathbf{n}}_e)\right\|^2_{L^2(e)}= 
\inf_{c_e\in\mathbb{R}}\left\|\beta\nabla  u\cdot  {\rm\mathbf{n}}_e-c_e\right\|^2_{L^2(e)}\\
&\qquad=\inf_{c_e\in\mathbb{R}}\left(\frac{|e|}{2}(2-c_e)^2+\frac{|e|}{2}(1-c_e)^2\right)\geq \frac{|e|}{4}\geq Ch.
\end{aligned}
\end{equation*}
Using the fact that the number of interface edges contained in $\Lambda$ is $O(h^{-1})$, we  observe that
\begin{equation*}
\sum_{e\in\mathcal{E}_h^\Gamma}\left\|\beta\nabla  u\cdot  {\rm\mathbf{n}}_e-P_0^e(\beta\nabla  u\cdot  {\rm\mathbf{n}}_e)\right\|^2_{L^2(e)}\geq \sum_{e\in\mathcal{E}_h^\Gamma,e\subset\Lambda} Ch\geq C.
\end{equation*}
\end{remark}

With the above discussions, we  have the following error estimate for the existing nonconforming IFE method. 
\begin{theorem}\label{the_H1_err}
Let $u$ and $u_h$ be the solutions of (\ref{weakform}) and (\ref{method1}), respectively. Under the assumption of  Lemma~\ref{lem_edge_est},   the following discretization error estimate holds true:
\begin{equation}\label{H1_error}
\|u-u_h\|_{a_h}\leq Ch\|u\|_{H^2(\Omega^+\cup\Omega^-)}+Ch^{1/2}\left\|[\beta]_\Gamma \nabla u\cdot {\rm\mathbf{t}}\right\|_{H^{1/2}(\Gamma)}.
\end{equation}
\end{theorem}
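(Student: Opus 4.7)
The plan is to assemble the pieces already set up in this section. Apply Strang's lemma (Lemma~\ref{lem_strang}) to split $\|u-u_h\|_{a_h}$ into an approximation error and a consistency error. The approximation error is immediately handled by (\ref{H1_est}), which follows from the optimal interpolation estimate in Theorem~\ref{theorem_interpolation} applied to $I_h^{\rm IFE}u$, giving the $Ch\|u\|_{H^2(\Omega^+\cup\Omega^-)}$ contribution.

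For the consistency error, I would use the representation in (\ref{H1_2}): integrate by parts elementwise, subtract the mean $P_0^e(\beta\nabla u\cdot\mathbf{n}_e)$ on each edge (legal because $\int_e[w_h]_e\,ds=0$ for $w_h\in V_{h,0}^{\rm IFE}$), and apply Cauchy--Schwarz over edges. The jump factor is controlled by (\ref{H1_3}), giving $Ch^{1/2}\|w_h\|_{a_h}$. The flux-residual factor is split into non-interface edges and interface edges. For $e\in\mathcal{E}_h^{non}$ the standard scaling estimate (\ref{H1_4}) yields $Ch|u|^2_{H^2(\Omega^+\cup\Omega^-)}$, while for $e\in\mathcal{E}_h^\Gamma$ we invoke Lemma~\ref{lem_edge_est} to get $Ch_\Gamma\sum_{T\in\mathcal{T}_h^\Gamma}|u|^2_{H^2(T^+\cup T^-)}+C\|[\beta]_\Gamma\nabla u\cdot\mathbf{t}\|^2_{H^{1/2}(\Gamma)}$.

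Combining, the flux residual is bounded by $Ch|u|^2_{H^2(\Omega^+\cup\Omega^-)}+C\|[\beta]_\Gamma\nabla u\cdot\mathbf{t}\|^2_{H^{1/2}(\Gamma)}$. Multiplying by the $Ch^{1/2}\|w_h\|_{a_h}$ from the jump factor and taking square roots (using $\sqrt{a+b}\le\sqrt{a}+\sqrt{b}$) gives
\begin{equation*}
\frac{|a_h(u-u_h,w_h)|}{\|w_h\|_{a_h}}\le Ch\,|u|_{H^2(\Omega^+\cup\Omega^-)}+Ch^{1/2}\|[\beta]_\Gamma\nabla u\cdot\mathbf{t}\|_{H^{1/2}(\Gamma)}.
\end{equation*}
Adding the approximation bound yields (\ref{H1_error}).

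All the real work has been done in the auxiliary estimates, so this proof is just bookkeeping; the only subtle point is the asymmetric scaling in the two factors --- the $h^{1/2}$ from $\|[w_h]_e\|_{L^2(e)}$ combined with the $O(1)$ contribution from $\|[\beta]_\Gamma\nabla u\cdot\mathbf{t}\|^2_{H^{1/2}(\Gamma)}$ in Lemma~\ref{lem_edge_est} is what produces the non-optimal $h^{1/2}$ term, matching exactly the obstruction identified in Remark~\ref{remark_suboptimal}.
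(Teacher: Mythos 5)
Your proposal is correct and follows essentially the same route as the paper: the second Strang lemma, the approximation bound (\ref{H1_est}), and the consistency bound obtained by combining (\ref{H1_2})--(\ref{H1_4}) with Lemma~\ref{lem_edge_est}, exactly as in the paper's combination of (\ref{edge_est}) and (\ref{incons}). The bookkeeping with $\sqrt{a+b}\le\sqrt{a}+\sqrt{b}$ and the identification of where the $h^{1/2}$ term originates are both accurate.
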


\begin{proof} 
It follows from  (\ref{H1_4}) and (\ref{H1_5}) that 
\begin{equation}\label{edge_est}
\sum_{e\in\mathcal{E}_h}\left\|\beta\nabla  u\cdot  {\rm\mathbf{n}}_e-P_0^e(\beta\nabla  u\cdot  {\rm\mathbf{n}}_e)\right\|^2_{L^2(e)}\leq Ch |u|^2_{H^2(\Omega^+\cup \Omega^-)} +C\left\|[\beta]_\Gamma \nabla u\cdot {\rm\mathbf{t}}\right\|^2_{H^{1/2}(\Gamma)},
\end{equation}
which together with (\ref{H1_2}) and (\ref{H1_3}) yields
\begin{equation}\label{incons}
\left|a_h(u-u_h,w_h)\right|\leq Ch^{1/2}\|w_h\|_{a_h}\left(h^{1/2}|u|_{H^2(\Omega^+\cup \Omega^-)} +\left\|[\beta]_\Gamma \nabla u\cdot {\rm\mathbf{t}}\right\|_{H^{1/2}(\Gamma)}\right).
\end{equation}
Combining (\ref{strang}), (\ref{H1_est}) and  (\ref{incons}), we complete the proof.
\end{proof}

Theorem~\ref{the_H1_err} suggests that the solution of the nonconforming IFE method (\ref{method1}) only converges with  a suboptimal convergence rate $O(h^{1/2})$ in the energy norm if $[\beta]_\Gamma \nabla u\cdot \textbf{t}\not=0$ on $\Gamma$. 
Applying global trace inequalities on $\Omega^+$ and $\Omega^-$ to the second term on the right-hand side of  (\ref{H1_error}), we get the following corollary.

\begin{corollary}
Let $u$ and $u_h$ be the solutions of (\ref{weakform}) and (\ref{method1}), respectively. Suppose that $[\beta]_\Gamma\not=0$ and $\nabla u\cdot \textbf{t}\not=0$ on $\Gamma$. Under the assumption of  Lemma~\ref{lem_edge_est}, we have
\begin{equation*}
\|u-u_h\|_{a_h}\leq Ch^{1/2}\|u\|_{H^2(\Omega^+\cup\Omega^-)}.
\end{equation*}
\end{corollary}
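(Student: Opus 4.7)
The plan is to start from the already-proved error bound (\ref{H1_error}) in Theorem~\ref{the_H1_err} and reduce the corollary to a standard trace/product-rule estimate on the two subdomains $\Omega^+$ and $\Omega^-$. The hypotheses $[\beta]_\Gamma\not=0$ and $\nabla u\cdot\mathbf{t}\not=0$ do not actually enter the upper-bound calculation; they merely single out the regime where the suboptimal rate is genuinely attained rather than trivially improved. So what remains is purely to bound the second term on the right-hand side of (\ref{H1_error}) by $C\|u\|_{H^2(\Omega^+\cup\Omega^-)}$.

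First, under the standing assumption $h\le 1$, the first term $Ch\|u\|_{H^2(\Omega^+\cup\Omega^-)}$ in (\ref{H1_error}) is absorbed into $Ch^{1/2}\|u\|_{H^2(\Omega^+\cup\Omega^-)}$, so everything reduces to showing the inequality $\|[\beta]_\Gamma\nabla u\cdot\mathbf{t}\|_{H^{1/2}(\Gamma)}\le C\|u\|_{H^2(\Omega^+\cup\Omega^-)}$. I would write $[\beta]_\Gamma\nabla u\cdot\mathbf{t}=(\beta^+\nabla u^+\cdot\mathbf{t})|_\Gamma-(\beta^-\nabla u^-\cdot\mathbf{t})|_\Gamma$ and estimate each piece separately using the triangle inequality in $H^{1/2}(\Gamma)$.

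Next, using (\ref{nt_smooth}), the tangent field $\mathbf{t}$ is $C^1$ in the tubular neighbourhood $U(\Gamma,\delta_0)$. I would extend $\mathbf{t}$ to a globally $C^1$ vector field $\widetilde{\mathbf{t}}$ on $\overline\Omega$ by multiplying with a smooth cutoff supported in $U(\Gamma,\delta_0)$ (so that $\widetilde{\mathbf{t}}|_\Gamma=\mathbf{t}$). Then, because $\beta^s\in C^1(\overline{\Omega^s})$ and $\widetilde{\mathbf{t}}\in (C^1(\overline\Omega))^2$, the product rule together with the uniform bounds in (\ref{ext_beta})–(\ref{new_vari_deri}) gives $\|\beta^s(\nabla u^s\cdot\widetilde{\mathbf{t}})\|_{H^1(\Omega^s)}\le C\|u^s\|_{H^2(\Omega^s)}$ for $s=+,-$. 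Applying the global trace theorem $H^1(\Omega^s)\hookrightarrow H^{1/2}(\partial\Omega^s)$ on each Lipschitz subdomain and restricting to $\Gamma\subset\partial\Omega^s$ then yields $\|\beta^s\nabla u^s\cdot\mathbf{t}\|_{H^{1/2}(\Gamma)}\le C\|u^s\|_{H^2(\Omega^s)}$, and summing $s=\pm$ produces the desired bound on $\|[\beta]_\Gamma\nabla u\cdot\mathbf{t}\|_{H^{1/2}(\Gamma)}$.

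I do not expect any real obstacle. The only minor technicality is the $C^1$ extension of $\mathbf{t}$ outside $U(\Gamma,\delta_0)$, which is handled by a standard smooth-cutoff construction using $\rho\in C^2(U(\Gamma,\delta_0))$, and the invocation of the trace theorem on the Lipschitz subdomains $\Omega^\pm$, whose constants depend only on $\Omega$ and $\Gamma$. Inserting the resulting estimate into Theorem~\ref{the_H1_err} completes the proof of the stated suboptimal bound $\|u-u_h\|_{a_h}\le Ch^{1/2}\|u\|_{H^2(\Omega^+\cup\Omega^-)}$.
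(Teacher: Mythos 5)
Your proposal is correct and follows essentially the same route as the paper, which disposes of the corollary in one sentence by ``applying global trace inequalities on $\Omega^+$ and $\Omega^-$ to the second term on the right-hand side of (\ref{H1_error})''; your argument is just the detailed elaboration of that step (the splitting $[\beta]_\Gamma\nabla u\cdot\mathbf{t}=\beta^+\nabla u^+\cdot\mathbf{t}-\beta^-\nabla u^-\cdot\mathbf{t}$, which is legitimate since $[\nabla u\cdot\mathbf{t}]_\Gamma=0$, followed by the $C^1$ extension of $\mathbf{t}$ and the trace estimate $H^1(\Omega^s)\hookrightarrow H^{1/2}(\Gamma)$). You are also right that the hypotheses $[\beta]_\Gamma\not=0$ and $\nabla u\cdot\mathbf{t}\not=0$ play no role in the upper bound and only identify the regime where the $O(h^{1/2})$ rate is the honest one.
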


\begin{remark}
 Using the standard duality argument (see \cite[p. 284]{brenner2008mathematical}), we can also derive the following suboptimal  $L^2$ error estimate:
\begin{equation*}
\|u-u_h\|_{L^2(\Omega)}\leq Ch\|u\|_{H^2(\Omega^+\cup\Omega^-)}.
\end{equation*}
\end{remark}

\section{A new nonconforming IFE method and error estimates}\label{sec_method2}
To recover the optimal convergence rates, different from the partially penalized IFE method in \cite{zhangphd}, we propose a parameter-free nonconforming IFE method.
 On each interface element $T\in\mathcal{T}_h^\Gamma$, define
\begin{equation}\label{lift1}
W_h(T):=\{w_h\in (L^2(T))^2: w_h=\nabla v_h~~~\forall v_h\in S_h(T) \}.
\end{equation}
We also define a space associated with an edge $e\in\mathcal{E}_h^\Gamma$ as
\begin{equation}\label{lift2}
W_e:=\{w_h\in (L^2(\Omega))^2:~ w_h|_{T_1^e}\in W_h(T^e_1),~ w_h|_{T^e_2}\in W_h(T^e_2),~ w_h|_{\Omega\backslash(T^e_1\cup T^e_2)}=0\},
\end{equation}
where  $T_1^e$ and $T_2^e$ are elements sharing the common edge $e$.
For simplicity of the implementation, the coefficient $\beta(x)$ is approximated  by 
%To make the method easy to implement, the coefficient $\beta(x)$ is approximated by 
\begin{equation*}
\beta_h(x)=\left\{
\begin{aligned}
&\beta^+(x)~\mbox{ if }~x\in\Omega^+_h,\\
&\beta^-(x)~\mbox{ if }~x\in\Omega^-_h.
\end{aligned}\right.
\end{equation*}
Define a local  {\em  lifting operator}  $r_e : L^2(e)\rightarrow W_e$ such that 
\begin{equation}\label{def_lift}
\int_\Omega \beta_h(x) r_e(\varphi)\cdot w_hdx=\int_e\{\beta_h w_h\cdot \textbf{n}_e\}_e\, \varphi \,ds\qquad \forall w_h\in W_e,
\end{equation}
where $\{v\}_e=\frac{1}{2}(v|_{T^e_1}+v|_{T^e_2})$. Obviously, $r_e(\varphi)$ exists uniquely for any $\varphi\in L^2(e)$ and  the support of $r_e(\varphi)$ is $T_1^e\cup T_2^e$. We emphasize that  the cost in computing $r_e(\varphi)$ with given $\varphi$ is not significant in general in practical implementation because the dimension of the space $W_h(T)$ is 2 for the Crouzeix-Raviart element and 3 for the rotated-$Q_1$ element. We also note that  for the Crouzeix-Raviart element we can express $r_e(\varphi)$ explicitly if we choose orthogonal basis functions of the space $W_h(T)$ as
\begin{equation*}
\omega_1(x)=\textbf{t}_{h},
\qquad
\omega_2(x)=
\left\{
\begin{aligned}
&\beta^-\textbf{n}_{h}~~~~\mbox{ if } x\in T_{h}^+\\
&\beta^+\textbf{n}_{h}~~~~\mbox{ if } x\in T_{h}^-\\
\end{aligned}\right..
\end{equation*}

The new nonconforming IFE method is to find $u_h\in V_{h,0}^{\rm IFE}$ such that
\begin{equation}\label{method2}
A_h(u_h,v_h):=\widetilde{a}_h(u_h,v_h)+b_h(u_h,v_h)+s_h(u_h,v_h)=\int_{\Omega}fv_hdx \qquad\forall v_h\in V_{h,0}^{\rm IFE},
\end{equation}
where
\begin{equation}\label{ph2}
\begin{aligned}
&\widetilde{a}_h(u_h,v_h)=\sum_{T\in\mathcal{T}_h}\int_T\beta_h(x)\nabla u_h\cdot\nabla v_hdx,\\
&b_h(u_h,v_h)=-\sum_{e\in\mathcal{E}_h^\Gamma}\int_e\left(\{\beta_h\nabla u_h\cdot\textbf{n}_e\}_e[v_h]_e+\{\beta_h\nabla v_h\cdot\textbf{n}_e\}_e[u_h]_e\right)ds,\\
&s_h(u_h,v_h)=4\sum_{e\in\mathcal{E}_h^\Gamma}\int_{T_1^e\cup T_2^e}\beta_h(x)r_e([u_h]_e)\cdot r_e([v_h]_e)dx.
\end{aligned}
\end{equation}

Clearly, the new nonconforming IFE method is symmetric and parameter-free. 
In our recent work  \cite{2021ji_IFE}, we studied a similar parameter-free IFE method using nodal values as degrees of freedom. However, the method has a limitation, i.e., the maximum angle of the interface elements should be less than or equal to $\pi/2$. In contrast, the nonconforming IFE method proposed in this paper works without this restriction. The IFE basis functions using integral-value degrees of freedom are unisolvent on arbitrary triangles which is a significant advantage over the IFE method using nodal values as degrees of freedom.

Now we analyze the new method. 
For all $v\in H_0^1(\Omega)\cap\widetilde{H}^2(\Omega)+ V_{h,0}^{\rm IFE}$, define the following mesh-dependent norm
\begin{equation*}
\|v\|_{\widetilde{a}_h}:=\sqrt{\widetilde{a}_h(v,v)}
\end{equation*}
and
\begin{equation}\label{def_nor2}
\interleave v \interleave_h:=\left(\|v\|_{\widetilde{a}_h}^2+\sum_{e\in\mathcal{E}_h^\Gamma}|e|\|\{\beta_h\nabla v\}_e\|^2_{L^2(e)}+\sum_{e\in\mathcal{E}_h^\Gamma}|e|^{-1}\| [v]_e\|^2_{L^2(e)}+s_h(v,v)\right)^{1/2}.
\end{equation}
The continuity of the bilinear form $A_h(\cdot,\cdot)$ is verified directly from the Cauchy-Schwarz inequality
\begin{equation}\label{conti}
|A_h(w,v)|\leq \interleave w \interleave_h \interleave v\interleave_h\qquad\forall w,v\in H_0^1(\Omega)\cap\widetilde{H}^2(\Omega)+ V_{h,0}^{\rm IFE}.
\end{equation}
The following lemma demonstrates the coercivity of the bilinear form $A_h(\cdot,\cdot)$  on the IFE space $V_{h,0}^{\rm IFE}$ with respect to the norm $\|\cdot\|_{\widetilde{a}_h}$.
\begin{lemma}\label{lem_Coercivity}
We have
\begin{equation}\label{Coercivity}
A_h(v_h,v_h)\geq \frac{1}{2}\|v_h\|_{\widetilde{a}_h}^2\qquad \forall v_h\in V_{h,0}^{\rm IFE}.
\end{equation}
\end{lemma}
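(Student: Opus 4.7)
The plan is to show that the lifting term $s_h$ precisely absorbs the indefinite consistency term $b_h$, leaving behind half of the energy. The key is the defining identity of the lifting operator $r_e$.

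First, I would reformulate $b_h(v_h,v_h)$ as a volume integral. For a fixed $v_h\in V_{h,0}^{\rm IFE}$ and an edge $e\in\mathcal{E}_h^\Gamma$, observe that both neighbouring elements $T_1^e,T_2^e$ are interface elements (since $e\cap\Gamma\neq\emptyset$), so $\nabla v_h|_{T_i^e}\in W_h(T_i^e)$ by the definition~(\ref{lift1}). Extending by zero outside $T_1^e\cup T_2^e$ yields an admissible test field in $W_e$, and definition~(\ref{def_lift}) gives
\begin{equation*}
\int_e\{\beta_h\nabla v_h\cdot\mathbf{n}_e\}_e[v_h]_e\,ds=\int_{T_1^e\cup T_2^e}\beta_h\, r_e([v_h]_e)\cdot\nabla v_h\,dx.
\end{equation*}
Summing over $e\in\mathcal{E}_h^\Gamma$ and using the symmetry in (\ref{ph2}) turns $b_h(v_h,v_h)$ into $-2\sum_e\int_{T_1^e\cup T_2^e}\beta_h r_e([v_h]_e)\cdot\nabla v_h\,dx$.

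Next I would apply Cauchy--Schwarz element-wise and then over the edge sum to obtain
\begin{equation*}
|b_h(v_h,v_h)|\leq 2\Bigl(\sum_{e\in\mathcal{E}_h^\Gamma}\int_{T_1^e\cup T_2^e}\beta_h|r_e([v_h]_e)|^2dx\Bigr)^{1/2}\Bigl(\sum_{e\in\mathcal{E}_h^\Gamma}\int_{T_1^e\cup T_2^e}\beta_h|\nabla v_h|^2dx\Bigr)^{1/2}.
\end{equation*}
By definition the first factor equals $\tfrac12\sqrt{s_h(v_h,v_h)}$. For the second factor I would use the geometric observation (a consequence of Assumption~\ref{assum_2}): since $\Gamma$ intersects $\partial T$ at exactly two points on two different edges, each interface element contains at most two interface edges, so each $T\in\mathcal{T}_h^\Gamma$ is counted at most twice in $\bigcup_e(T_1^e\cup T_2^e)$. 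Hence the second factor is bounded by $\sqrt{2}\,\|v_h\|_{\widetilde{a}_h}$, giving $|b_h(v_h,v_h)|\leq\sqrt{2}\sqrt{s_h(v_h,v_h)}\,\|v_h\|_{\widetilde{a}_h}$.

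Finally, applying Young's inequality $ab\leq\tfrac12(a^2+b^2)$ with $a=\|v_h\|_{\widetilde{a}_h}$ and $b=\sqrt{2s_h(v_h,v_h)}$ yields $|b_h(v_h,v_h)|\leq\tfrac12\|v_h\|_{\widetilde{a}_h}^2+s_h(v_h,v_h)$, and substituting into the definition of $A_h$ cancels $s_h$ exactly, producing the claimed bound $A_h(v_h,v_h)\geq\tfrac12\|v_h\|_{\widetilde{a}_h}^2$. The main point to nail down carefully is the step that converts the face integral into a volume pairing against $\nabla v_h$: one must verify that $\nabla v_h$, restricted to the two neighbours and extended by zero, is genuinely admissible as a test field in $W_e$, which requires both neighbours of an interface edge to be interface elements and uses the definition of $W_h(T)$ as the gradient space of $S_h(T)$. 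Once this is done, the choice of the coefficient $4$ in the definition of $s_h$ makes the constants line up so that Young's inequality with equal weights gives exactly the factor $\tfrac12$.
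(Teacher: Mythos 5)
Your proposal is correct and follows essentially the same route as the paper: convert $b_h(v_h,v_h)$ to a volume pairing via the defining identity of $r_e$ with the test field $\nabla v_h$ extended by zero, apply Cauchy--Schwarz, bound the overlap sum by $2\|v_h\|_{\widetilde{a}_h}^2$ using the fact that each interface element has at most two interface edges, and finish with Young's inequality so that the factor $4$ in $s_h$ makes the stabilization term cancel exactly. The only cosmetic difference is where you place the constant $4$ inside the Cauchy--Schwarz factors; the paper keeps it with the lifting term so that the first factor is literally $\sqrt{s_h(v_h,v_h)}$.
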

\begin{proof}
For all $v_h\in V_{h,0}^{\rm IFE}$, choosing $w_h|_{T_{1}^e\cup T_{2}^e}=\nabla v_h$, $w_h|_{\Omega\backslash(T_{1}^e\cup T_{2}^e) }=0$ in (\ref{def_lift}) and using the fact that the support of  $r_e(\varphi)$ is $T_{1}^e\cup T_{2}^e$,  we have
\begin{equation*}
\int_{T_{1}^e\cup T_{2}^e} \beta_h(x) r_e(\varphi)\cdot \nabla v_h dx=\int_\Omega \beta_h(x) r_e(\varphi)\cdot \nabla v_h dx=\int_e\{\beta_h \nabla v_h\cdot \textbf{n}_e\}_e\, \varphi \,ds.
\end{equation*}
It follows from the Cauchy-Schwarz inequality that 
\begin{equation}\label{pro_coer1}
\begin{aligned}
&|b_h(v_h,v_h)|=\left|2\sum_{e\in\mathcal{E}_h^\Gamma}\int_e\{\beta_h\nabla v_h\cdot\textbf{n}_e\}_e[v_h]_eds\right|\\
&\leq \left(4\sum_{e\in\mathcal{E}_h^\Gamma} \int_{\Omega} \beta_h r_e([v_h]_e)\cdot r_e([v_h]_e)dx\right)^{1/2}\left(\sum_{e\in\mathcal{E}_h^\Gamma} \int_{T_{1}^e\cup T_{2}^e} \beta_h \nabla v_h \cdot \nabla v_h dx\right)^{1/2}.
\end{aligned}
\end{equation}
Since each interface element has at most two interface edges from Assumption {\ref{assum_2}}, each element is calculated at most twice. Therefore,
\begin{equation}\label{pro_coer2}
\sum_{e\in\mathcal{E}_h^\Gamma} \int_{T_{1}^e\cup T_{2}^e} \beta_h \nabla v_h \cdot \nabla v_h dx\leq 2\sum_{T\in\mathcal{T}_h}\int_T \beta_h(x) \nabla v_h \cdot \nabla v_h dx.
\end{equation}
Combining (\ref{ph2}), (\ref{pro_coer1}) and (\ref{pro_coer2}), we have
\begin{equation*}
\begin{aligned}
\left|b_h(v_h,v_h)\right|&\leq \left(s_h(v_h,v_h)\right)^{1/2}\left(2\sum_{T\in\mathcal{T}_h}\int_T \beta_h(x) \nabla v_h \cdot \nabla v_h dx\right)^{1/2}\\
&\leq s_h(v_h,v_h)+\frac{1}{2} \sum_{T\in\mathcal{T}_h}\int_T \beta_h(x) \nabla v_h \cdot \nabla v_h dx,
\end{aligned}
\end{equation*}
 which together with (\ref{ph2}) yields the result 
\begin{equation*}
\begin{aligned}
A_h(v_h,v_h)&=\widetilde{a}_h(v_h,v_h)+b_h(v_h,v_h)+s_h(v_h,v_h)\\
&\geq \frac{1}{2} \sum_{T\in\mathcal{T}_h}\int_T \beta_h(x) \nabla v_h \cdot \nabla v_h dx = \frac{1}{2}\|v_h\|_{\widetilde{a}_h}^2.
\end{aligned}
\end{equation*}
\end{proof}
Next, we show the equivalence of the $\|\cdot\|_{\widetilde{a}_h}$-norm and the $ \interleave\cdot\interleave_h$-norm on the IFE space $V_{h,0}^{\rm IFE}$. To begin with, we need the following trace inequality for the IFE shape functions in $S_h(T)$ which can be verified via straightforward calculations. We also refer readers to \cite[Theorem 2.7]{2019Non_Lin}. 
\begin{lemma}
There exists a  constant $C$ independent of $h$ and the interface location relative to the mesh such that
\begin{equation}\label{trace_IFE}
\|\nabla v_h\|_{L^2(\partial T)}\leq Ch_T^{-1/2}\|\nabla v_h\|_{L^2(T)}~\quad \forall v_h\in S_h(T)\quad\forall T\in\mathcal{T}_h^\Gamma.
\end{equation}
\end{lemma}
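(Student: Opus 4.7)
The main difficulty in proving (\ref{trace_IFE}) is that $\nabla v_h$ has different polynomial representations $\nabla v_h^+$ and $\nabla v_h^-$ on $T_h^+$ and $T_h^-$ respectively, and one of these subregions can be arbitrarily small, so we cannot simply apply the standard polynomial trace inequality on $T_h^\pm$. My plan is to first use the shape conditions (\ref{shape2.1})--(\ref{shape2}) to show that $\nabla v_h^+$ and $\nabla v_h^-$ are uniformly equivalent as polynomials defined on the whole element $T$, and then to apply a Remez-type polynomial inequality on whichever of $T_h^\pm$ has area at least $|T|/2$.

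First I would establish the $L^\infty$-equivalence
$$\|\nabla v_h^+\|_{L^\infty(T)} \leq C\|\nabla v_h^-\|_{L^\infty(T)} \quad\text{and}\quad \|\nabla v_h^-\|_{L^\infty(T)} \leq C\|\nabla v_h^+\|_{L^\infty(T)},$$
with $C$ depending only on $\beta_{\max}/\beta_{\min}$ and shape regularity. For the Crouzeix-Raviart case, $\nabla v_h^\pm$ are constant vectors; tangential continuity of $v_h$ on the whole line through $\Gamma_h\cap T$ gives $(\nabla v_h^+ - \nabla v_h^-)\cdot\mathbf{t}_h = 0$, and (\ref{cr_n}) gives $\beta_c^+\nabla v_h^+\cdot\mathbf{n}_h = \beta_c^-\nabla v_h^-\cdot\mathbf{n}_h$, yielding the equivalence directly. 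For the rotated-$Q_1$ case, Remark~\ref{remark_cc} gives $d(v_h^+) = d(v_h^-)$, so $\nabla v_h^+ - \nabla v_h^-$ is a constant vector. The vanishing of the affine part of $v_h^+ - v_h^-$ on the line through $D$ and $E$ forces this constant to be parallel to $\mathbf{n}_h$, say $\lambda\mathbf{n}_h$, and substituting into (\ref{shape2}) at $x_p$ yields $\lambda = (\beta_c^-/\beta_c^+ - 1)\nabla v_h^-(x_p)\cdot\mathbf{n}_h$, hence $|\lambda| \leq C\|\nabla v_h^-\|_{L^\infty(T)}$. Since $T$ is shape regular and $\nabla v_h^\pm$ are polynomials of degree at most one, the standard norm equivalence on finite-dimensional polynomial spaces converts the $L^\infty$-equivalence into $\|\nabla v_h^-\|_{L^2(T)} \leq C\|\nabla v_h^+\|_{L^2(T)}$ and vice versa.

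Next, since $|T_h^+| + |T_h^-| = |T|$, at least one of the two subregions has area at least $|T|/2$; without loss of generality assume $|T_h^+| \geq |T|/2$. I would then invoke an $L^2$ Remez-type inequality (e.g.\ Brudnyi--Ganzburg or Wilhelmsen) which states that for polynomials of fixed degree on a convex body $K$ and any measurable subset $S\subset K$ with $|S|/|K| \geq 1/2$, $\|p\|_{L^2(K)} \leq C\|p\|_{L^2(S)}$, with $C$ depending only on the polynomial degree and the volume ratio. Applied to the (degree $\leq 1$) polynomial $\nabla v_h^+$ on the convex set $T$ with $S = T_h^+$, this yields $\|\nabla v_h^+\|_{L^2(T)} \leq C\|\nabla v_h^+\|_{L^2(T_h^+)}$.

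Finally, I would chain together the estimates using the standard polynomial trace inequality $\|\nabla v_h^\pm\|_{L^2(\partial T)}^2 \leq Ch_T^{-1}\|\nabla v_h^\pm\|_{L^2(T)}^2$ on the shape-regular element $T$:
$$\|\nabla v_h\|_{L^2(\partial T)}^2 \leq \|\nabla v_h^+\|_{L^2(\partial T)}^2 + \|\nabla v_h^-\|_{L^2(\partial T)}^2 \leq Ch_T^{-1}\bigl(\|\nabla v_h^+\|_{L^2(T)}^2 + \|\nabla v_h^-\|_{L^2(T)}^2\bigr),$$
which, combined with the $L^2$-equivalence of $\nabla v_h^\pm$ on $T$ and the Remez estimate, reduces to $Ch_T^{-1}\|\nabla v_h^+\|_{L^2(T_h^+)}^2 \leq Ch_T^{-1}\|\nabla v_h\|_{L^2(T)}^2$. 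The principal obstacle is ensuring that the constant in the polynomial norm equivalence on $T_h^\pm$ is independent of the cut position; this is resolved cleanly by the Remez inequality, whose constant depends only on the degree and the volume ratio $\mu \geq 1/2$ guaranteed by the area-splitting argument.
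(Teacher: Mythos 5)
Your proposal is correct, and it supplies an actual argument where the paper gives none: the paper dismisses (\ref{trace_IFE}) as verifiable ``via straightforward calculations'' and defers to \cite[Theorem 2.7]{2019Non_Lin}, so there is no in-text proof to match against. Your two key steps are exactly the ones any proof must contain and they are carried out correctly: (i) the interface conditions (\ref{shape2.1})--(\ref{shape2}) (via Remark~\ref{remark_cc} and (\ref{cr_n})) force $\nabla v_h^+-\nabla v_h^-$ to be a constant multiple of $\mathbf{t}_h^{\perp}=\mathbf{n}_h$ with the multiplier controlled by $\beta_c^-/\beta_c^+-1$, giving the uniform $L^\infty$ (hence $L^2$) equivalence of the two polynomial pieces over all of $T$; (ii) the ``small subelement'' obstruction is neutralized by passing to whichever of $T_h^{\pm}$ has at least half the area. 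The route differs from the one the cited reference (and this paper's own appendix, for the related bounds (\ref{bounds_basis1})) takes, which is to solve the local system explicitly (Sherman--Morrison) and bound the IFE basis functions in $W^m_\infty(T)$, then assemble the trace estimate from those bounds; your argument bypasses the explicit basis computation entirely, which makes it shorter and arguably more robust to changes in the element. The only mild criticism is that the Remez inequality is heavier machinery than needed: for Crouzeix--Raviart the gradients are constant vectors so $\|\nabla v_h^+\|_{L^2(T)}^2=|T|\,|\nabla v_h^+|^2\le 2\|\nabla v_h^+\|_{L^2(T_h^+)}^2$ is immediate, and for rotated-$Q_1$ the set $T_h^+$ is itself a convex polygon and the components of $\nabla v_h^+$ are affine, so an elementary norm-equivalence argument on the half-area convex subset suffices; but invoking Remez is perfectly valid and the constant dependence you claim (degree and volume ratio only) is right. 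One cosmetic point: the equivalence constant should be stated in terms of $\beta^e_{\min}$, $\beta^e_{\max}$ from (\ref{ext_beta}) rather than $\beta_{\min}$, $\beta_{\max}$, since $\beta_c^{\pm}$ are values of the extended coefficients on the whole interface element.
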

We also need the following stability  estimate for the local lifting operator $r_e$.
\begin{lemma}\label{lem_stab_lift}
There exists a  constant $C$ independent of $h$ and the interface location relative to the mesh such that
\begin{equation*}
\|r_e(\varphi)\|_{L^2(\Omega)}\leq C|e|^{-1/2}\|\varphi\|_{L^2(e)}\quad \forall \varphi\in L^2(e)\quad \forall e\in\mathcal{E}_h^\Gamma.
\end{equation*}
\end{lemma}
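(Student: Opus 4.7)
The plan is to test the definition of $r_e$ against $w_h = r_e(\varphi)$ itself, i.e., to use the duality pairing in (\ref{def_lift}) with its own output, and then bound both sides. This is the standard stability argument for lifting operators, and the main ingredients are the uniform ellipticity of $\beta_h$ and the trace inequality (\ref{trace_IFE}) for IFE shape functions.

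More precisely, first I would note that the edge $e\in\mathcal{E}_h^\Gamma$ is cut by $\Gamma$, so both neighboring elements $T_1^e$ and $T_2^e$ are interface elements; in particular, $r_e(\varphi)|_{T_i^e}\in W_h(T_i^e)$ is the gradient of a function in $S_h(T_i^e)$, so (\ref{trace_IFE}) applies on each $T_i^e$. Inserting $w_h=r_e(\varphi)\in W_e$ into (\ref{def_lift}) and using $\beta_h\geq \beta_{min}>0$ together with the fact that $r_e(\varphi)$ is supported in $T_1^e\cup T_2^e$ yields
\begin{equation*}
\beta_{min}\|r_e(\varphi)\|_{L^2(\Omega)}^2 \leq \int_\Omega \beta_h |r_e(\varphi)|^2\,dx = \int_e \{\beta_h r_e(\varphi)\cdot\mathbf{n}_e\}_e\,\varphi\,ds.
\end{equation*}

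Next, I apply the Cauchy--Schwarz inequality on $e$, the uniform upper bound $\beta_h\leq \beta_{max}^e$, the triangle inequality $\|\{\cdot\}_e\|_{L^2(e)}\leq \tfrac{1}{2}\sum_{i=1,2}\|\cdot|_{T_i^e}\|_{L^2(e)}$, and the trace inequality (\ref{trace_IFE}) on each interface element $T_i^e$, to obtain
\begin{equation*}
\left|\int_e \{\beta_h r_e(\varphi)\cdot\mathbf{n}_e\}_e\,\varphi\,ds\right|
\leq C\sum_{i=1,2}\|r_e(\varphi)\|_{L^2(\partial T_i^e)}\,\|\varphi\|_{L^2(e)}
\leq C|e|^{-1/2}\|r_e(\varphi)\|_{L^2(\Omega)}\|\varphi\|_{L^2(e)},
\end{equation*}
where I used $|e|\sim h_{T_i^e}$ by shape regularity. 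Dividing by $\|r_e(\varphi)\|_{L^2(\Omega)}$ then gives the claimed bound.

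There is no serious obstacle; the only point needing care is justifying the application of (\ref{trace_IFE}) to $r_e(\varphi)$, which is immediate from the definitions (\ref{lift1})--(\ref{lift2}) and the observation that interface edges are shared by two interface elements. Shape regularity of the mesh is what allows the exchange between $h_T$ and $|e|$ in the final constant.
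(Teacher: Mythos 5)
Your proposal is correct and follows essentially the same route as the paper's own proof: test (\ref{def_lift}) with $w_h=r_e(\varphi)$, use the uniform bounds on $\beta_h$ and the Cauchy--Schwarz inequality on $e$, and then control $\|r_e(\varphi)|_{T_i^e}\|_{L^2(e)}$ via the IFE trace inequality (\ref{trace_IFE}) applied to the $v_h\in S_h(T_i^e)$ with $\nabla v_h=r_e(\varphi)|_{T_i^e}$, converting $h_T^{-1/2}$ to $|e|^{-1/2}$ by shape regularity. Your explicit remark that both $T_1^e$ and $T_2^e$ are interface elements (so that (\ref{trace_IFE}) indeed applies) is a point the paper leaves implicit, but otherwise the two arguments coincide.
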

\begin{proof}
Since the support of $r_e(\varphi)$ is $T_1^e\cup T_2^e$,
choosing  $w_h=r_e(\varphi)$ in (\ref{def_lift}) yields 
\begin{equation}\label{pro_lem_lif1}
\begin{aligned}
\|r_e(\varphi)\|^2_{L^2(\Omega)}&\leq C\|\beta_h^{1/2}r_e(\varphi)\|^2_{L^2(T_1^e\cup T_2^e)}=C\int_e\{\beta_h r_e(\varphi)\cdot\textbf{n}_e\}_e\varphi ds\\
&\leq C\|\{\beta_hr_e(\varphi)\}_e\|_{L^2(e)}\|\varphi\|_{L^2(e)}\leq C\|\varphi\|_{L^2(e)}\sum_{i=1,2}\|r_e(\varphi)|_{T_i^e}\|_{L^2(e)}.
\end{aligned}
\end{equation}
Note $r_e(\varphi)|_{T_1^e}\in W_h(T_1^e)$, we know from (\ref{lift1}) that there exists a function $v_h\in S_h(T_1^e)$ such that $r_e(\varphi)|_{T_1^e}=\nabla v_h$. By the trace inequality (\ref{trace_IFE}) for the IFE shape functions, we have
\begin{equation*}
  \|r_e(\varphi)|_{T_1^e} \|_{L^2(e)}=\|\nabla v_h\|_{L^2(e)}\leq Ch_T^{-1/2}\|\nabla v_h\|_{L^2(T_1^e)}=Ch_T^{-1/2}\|r_e(\varphi)\|_{L^2(T_1^e)},
\end{equation*}
which, together with (\ref{pro_lem_lif1}) and a similar estimate on $T_2^e$, completes the proof of this lemma.
\end{proof}

\begin{remark}\label{remark_PPIFEM}
Lemma~\ref{lem_stab_lift} indicates that  the parameter-free stabilization is weaker than the standard $L^2$ stabilization. Thus, our analysis is also valid for the nonconforming PPIFE method in \cite{zhangphd}, i.e.,  replacing $s_h$ in (5.4)  by $\tilde{s}_h$ defined by 
$$\tilde{s}_h(u_h,v_h)=\sum_{e\in\mathcal{E}_h^\Gamma}\frac{\eta_e}{|e|}\int_{e}[u_h]_e [v_h]_edx,$$
where $\eta_e>0$ should be sufficiently large.
\end{remark}

We now prove the norm-equivalence in the following lemma.
\begin{lemma}\label{lema_equ}
There exists a constant $C$ independent of $h$ and the interface location relative to the mesh such that
\begin{equation}\label{equali}
\|v_h\|_{\widetilde{a}_h} \leq \interleave v_h \interleave_h\leq C\|v_h\|_{\widetilde{a}_h}\qquad \forall v_h\in V_{h,0}^{\rm IFE}.
\end{equation}
\end{lemma}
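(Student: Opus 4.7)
The first inequality in (\ref{equali}) is immediate from the definition (\ref{def_nor2}), since each of the three terms appended to $\|v_h\|^2_{\widetilde{a}_h}$ in the definition of $\interleave\cdot\interleave_h$ is nonnegative. My plan for the reverse inequality is to bound each of those three additional terms individually by $C\|v_h\|^2_{\widetilde{a}_h}$, and to order the bounds so that later ones can reuse earlier ones.

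First I would treat the jump term. Since $V_{h,0}^{\rm IFE}\subset V_h^{\rm IFE}$ and $v_h|_T\in H^1(T)$ for every $T$, the inequality (\ref{lem_jumpe}) applies directly, and yields
\begin{equation*}
\sum_{e\in\mathcal{E}_h^\Gamma}|e|^{-1}\|[v_h]_e\|^2_{L^2(e)}\leq C\sum_{e\in\mathcal{E}_h^\Gamma}\sum_{T\in\mathcal{T}_h^e}|v_h|^2_{H^1(T)}\leq C\|v_h\|^2_{\widetilde{a}_h},
\end{equation*}
where shape regularity limits each element to a uniformly bounded number of adjacent interface edges, and the lower bound $\beta_h\geq\beta^e_{\min}$ from (\ref{ext_beta}) absorbs the coefficient weight.

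For the average-of-gradients term, I would apply the trace inequality (\ref{trace_IFE}) on each interface element adjacent to $e$ (and the classical polynomial trace/inverse inequality whenever the neighbour of $e$ happens to be a non-interface element, where $v_h$ is a polynomial), together with $\|\beta_h\|_{L^\infty(T)}\leq\beta^e_{\max}$ and $|e|\leq C h_T$, to deduce
\begin{equation*}
|e|\,\|\{\beta_h\nabla v_h\}_e\|^2_{L^2(e)}\leq C\sum_{T\in\mathcal{T}_h^e}\|\nabla v_h\|^2_{L^2(T)}.
\end{equation*}
Summing over $e\in\mathcal{E}_h^\Gamma$ and invoking the finite-overlap property once more, I obtain the bound by $C\|v_h\|^2_{\widetilde{a}_h}$. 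Finally, for the stabilization term, the stability estimate of Lemma~\ref{lem_stab_lift} gives
\begin{equation*}
s_h(v_h,v_h)\leq C\sum_{e\in\mathcal{E}_h^\Gamma}\|r_e([v_h]_e)\|^2_{L^2(\Omega)}\leq C\sum_{e\in\mathcal{E}_h^\Gamma}|e|^{-1}\|[v_h]_e\|^2_{L^2(e)},
\end{equation*}
which has already been controlled in the first step. Adding the three bounds completes the proof.

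I do not expect a substantial obstacle. The only mild subtlety is that (\ref{trace_IFE}) is formulated for IFE shape functions on interface elements, whereas the two elements sharing an interface edge need not both be interface elements; on any non-interface neighbour, however, $v_h$ is a standard Crouzeix--Raviart or rotated-$Q_1$ polynomial and the classical trace/inverse inequality applies unchanged. All remaining manipulations are bookkeeping with constants depending only on $\beta_{\min}$, $\beta_{\max}$ and the shape-regularity constant $\varrho$.
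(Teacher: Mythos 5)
Your proposal is correct and follows essentially the same route as the paper: bound the jump term via (\ref{lem_jumpe}), the gradient-average term via the trace inequality (\ref{trace_IFE}), and the stabilization term via Lemma~\ref{lem_stab_lift} combined with the jump bound. Your remark about non-interface neighbours of an interface edge is a harmless extra precaution (under Assumption~\ref{assum_2} both elements sharing an interface edge are interface elements, which is why the paper does not address it), and does not change the argument.
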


\begin{proof}
We just need to prove the second inequality since the first inequality is obvious. By the trace inequality (\ref{trace_IFE}) for the IFE shape functions, we can see that
\begin{equation}\label{pro_equ1}
\begin{aligned}
\sum_{e\in\mathcal{E}_h^\Gamma}|e|&\|\{\beta_h\nabla v_h\}_e\|^2_{L^2(e)}\leq C\sum_{e\in\mathcal{E}_h^\Gamma} \sum_{T\in\mathcal{T}_h^e}\|\nabla v_h\|^2_{L^2(T)}\leq C\|v_h\|^2_{\widetilde{a}_h}.
\end{aligned}
\end{equation}
From (\ref{lem_jumpe}), we have 
\begin{equation}\label{pro_equ2}
\sum_{e\in\mathcal{E}_h^\Gamma}|e|^{-1}\| [v_h]_e\|^2_{L^2(e)}\leq C\sum_{e\in\mathcal{E}_h^\Gamma} \sum_{T\in\mathcal{T}_h^e}\|\nabla v_h\|^2_{L^2(T)}\leq C\|v_h\|^2_{\widetilde{a}_h},
\end{equation}
which, together with  Lemma~\ref{lem_stab_lift} for the local lifting operator,  leads to
\begin{equation}\label{pro_equ3}
s_h(v_h,v_h)\leq C\sum_{e\in\mathcal{E}_h^\Gamma}\left\|r_e([v_h]_e)\right\|^2_{L^2(\Omega)}\leq C\sum_{e\in\mathcal{E}_h^\Gamma}|e|^{-1} \|[v_h]_e\|^2_{L^2(e)}\leq C\|v_h\|^2_{\widetilde{a}_h}.
\end{equation}
Combining (\ref{def_nor2}), (\ref{pro_equ1})-(\ref{pro_equ3}), we get  the second inequality in (\ref{equali}).
\end{proof}

The following lemma provides an optimal estimate for the interpolation error in terms of the norm $\interleave\cdot \interleave_h$.
\begin{lemma}\label{ener_app}
Suppose $v\in \widetilde{H}^2(\Omega)$, then there exists a constant $C$ independent of $h$ and the interface location relative to the mesh such that
\begin{equation*}
\interleave v-I_h^{\rm IFE}v \interleave_h\leq Ch\|v\|_{H^2(\Omega^+\cup\Omega^-)}.
\end{equation*}
\end{lemma}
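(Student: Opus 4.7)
The plan is to decompose $\interleave v-I_h^{\rm IFE}v\interleave_h^2$ into the four pieces appearing in (\ref{def_nor2}) and bound each by $Ch^2\|v\|^2_{H^2(\Omega^+\cup\Omega^-)}$. The broken-energy piece $\|v-I_h^{\rm IFE}v\|_{\widetilde{a}_h}^2$ is immediate from Theorem~\ref{theorem_interpolation} with $m=1$ and the boundedness of $\beta_h$. The stabilization piece $s_h(v-I_h^{\rm IFE}v,v-I_h^{\rm IFE}v)$ reduces to the jump piece via Lemma~\ref{lem_stab_lift} applied to $\varphi=[v-I_h^{\rm IFE}v]_e$, together with the fact (Assumption~\ref{assum_2}) that each interface element hosts at most two interface edges; this gives $s_h(v-I_h^{\rm IFE}v,v-I_h^{\rm IFE}v)\le C\sum_{e\in\mathcal{E}_h^\Gamma}|e|^{-1}\|[v-I_h^{\rm IFE}v]_e\|^2_{L^2(e)}$.

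The real work lies in the two interface-edge sums: the jump term just above and the weighted normal-derivative term $\sum_{e\in\mathcal{E}_h^\Gamma}|e|\|\{\beta_h\nabla(v-I_h^{\rm IFE}v)\}_e\|^2_{L^2(e)}$. The key geometric observation is that $\Gamma_h\cap T$ is the chord joining the two points $\{D,E\}=\Gamma\cap\partial T$, so for any edge $e\subset\partial T$ one has $e\cap T^s=e\cap T_h^s$ for $s=\pm$. Consequently, on $e\cap T^s$ we have $\beta_h=\beta^s$, $v=v_E^s$, and $I_h^{\rm IFE}v=(I_h^{\rm IFE}v)^s$, so there is no $\Gamma/\Gamma_h$ mismatch on edges. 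Using $[v]_e=0$ (since $v\in H^1(\Omega)$) and a triangle inequality over $T\in\mathcal{T}_h^e$ and $s=\pm$, both edge sums reduce to linear combinations of $\|v_E^s-(I_h^{\rm IFE}v)^s\|^2_{L^2(e)}$ and $\|\nabla(v_E^s-(I_h^{\rm IFE}v)^s)\|^2_{L^2(e)}$ summed over $T\in\mathcal{T}_h^\Gamma$ and $s=\pm$.

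The remaining step is the standard trace estimate on $T$ applied to $w=v_E^s-(I_h^{\rm IFE}v)^s\in H^2(T)$, namely $\|w\|^2_{L^2(e)}\le C(h_T^{-1}\|w\|^2_{L^2(T)}+h_T|w|^2_{H^1(T)})$ and $\|\nabla w\|^2_{L^2(e)}\le C(h_T^{-1}|w|^2_{H^1(T)}+h_T|w|^2_{H^2(T)})$. Multiplying by $|e|^{-1}\sim h_T^{-1}$ or $|e|\sim h_T$ and summing, the right-hand sides are exactly the quantities controlled by Theorem~\ref{chazhi_error} for $m=0,1$ in the jump case and $m=1,2$ in the derivative case, each bounded by $Ch_\Gamma^2\|v\|^2_{H^2(\Omega^+\cup\Omega^-)}\le Ch^2\|v\|^2_{H^2(\Omega^+\cup\Omega^-)}$.

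The main obstacle to this strategy is that the trace inequality for $\nabla w$ demands $w\in H^2(T)$ on the whole of $T$, whereas the polynomial branch $(I_h^{\rm IFE}v)^s$ is a priori only designed to model the solution on $T_h^s$; the resolution is exactly Theorem~\ref{chazhi_error}, which shows $(I_h^{\rm IFE}v)^s$ approximates $v_E^s$ optimally on \emph{all} of $T$, regardless of how tiny $T\cap\Omega^s$ may be. Without that whole-element approximation result the thin region $T^\triangle$ would obstruct a clean bound on the interface edges.
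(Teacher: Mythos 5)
Your proposal is correct and follows essentially the same route as the paper: Theorem~\ref{theorem_interpolation} for the broken-energy piece, Lemma~\ref{lem_stab_lift} to reduce the stabilization term to the jump term, and standard trace inequalities on whole elements combined with Theorem~\ref{chazhi_error} (for $m=0,1$ and $m=1,2$ respectively) for the two interface-edge sums, using exactly the observation that $\Gamma$ and $\Gamma_h$ agree on edges so no mismatch term arises there. The only cosmetic difference is that the paper bounds the jump term directly via the $H^1(T)$ trace inequality applied to $v-I_h^{\rm IFE}v$ rather than splitting into $\pm$ branches, which changes nothing.
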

\begin{proof}
The first term in the norm $\interleave\cdot\interleave_h$ can be bounded by Theorem~\ref{theorem_interpolation},
\begin{equation}\label{pro_int11}
\|v-I_h^{\rm IFE}v\|_{\widetilde{a}_h}\leq Ch\|v\|_{H^2(\Omega^+\cup\Omega^-)}.
\end{equation}
Since $(v-I_h^{\rm IFE}v)|_{T}\in H^1(T)$ for all $T\in\mathcal{T}_h^\Gamma$, by the standard trace inequality and Theorem~\ref{chazhi_error}, we have
\begin{equation}\label{pro_int1b}
\begin{aligned}
&\sum_{e\in\mathcal{E}_h^\Gamma}|e|^{-1}\| [v-I_h^{\rm IFE}v]_e\|^2_{L^2(e)}\leq C\sum_{e\in\mathcal{E}_h^\Gamma}\sum_{T
\in\mathcal{T}_h^e}\left(h_T^{-2}\|v-I_h^{\rm IFE}v\|^2_{L^2(T)}+|v-I_h^{\rm IFE}v|^2_{H^1(T)}\right)\\
&\qquad\leq C\sum_{T\in\mathcal{T}_h^\Gamma}\left(h_T^{-2}\|v-I_h^{\rm IFE}v\|^2_{L^2(T)}+|v-I_h^{\rm IFE}v|^2_{H^1(T)}\right)\leq Ch_\Gamma^2\|v\|^2_{H^2(\Omega^+\cup\Omega^-)},
\end{aligned}
\end{equation}
which, together with Lemma~\ref{lem_stab_lift}, implies 
\begin{equation}\label{pro_int3b}
\begin{aligned}
s_h(v-I_h^{\rm IFE}v,v-I_h^{\rm IFE}v)\leq C\sum_{e\in\mathcal{E}_h^\Gamma}|e|^{-1}\|[v-I_h^{\rm IFE}v]_e\|^2_{L^2(e)}\leq Ch_\Gamma^2\|v\|^2_{H^2(\Omega^+\cup\Omega^-)}.
\end{aligned}
\end{equation}
Let $e^s=e\cap \Omega^s$, $s=+,-$. Recalling the notations in (\ref{def_pm})-(\ref{extension}), we have
\begin{equation*}
\begin{aligned}
\|\{\beta_h\nabla (v&-I_h^{\rm IFE}v)\}_e\|^2_{L^2(e)}=\|\{\beta_h\nabla (v-I_h^{\rm IFE}v)\}_e\|^2_{L^2(e^+)}+\|\{\beta_h\nabla (v-I_h^{\rm IFE}v)\}_e\|^2_{L^2(e^-)}\\
&\leq C\|\{\nabla (v_E^+-(I_h^{\rm IFE}v)^+)\}_e\|^2_{L^2(e)}+C\|\{\nabla (v_E^--(I_h^{\rm IFE}v)^-)\}_e\|^2_{L^2(e)}.
\end{aligned}
\end{equation*}
Then using the standard trace inequality and Theorem~\ref{chazhi_error},  we infer 
\begin{equation}\label{pro_int2b}
\begin{aligned}
&\sum_{e\in\mathcal{E}_h^\Gamma}|e|\|\{\beta_h\nabla (v-I_h^{\rm IFE}v)\}_e\|^2_{L^2(e)}\\
&\leq C\sum_{T\in\mathcal{T}_h^\Gamma}\sum_{s=+,-}\left(|v_E^s-(I_h^{\rm IFE}v)^s|^2_{H^1(T)}+h_T^2|v_E^s-(I_h^{\rm IFE}v)^s|^2_{H^2(T)}\right)\\
&\leq Ch_\Gamma^2\|v\|^2_{H^2(\Omega^+\cup\Omega^-)}.
\end{aligned}
\end{equation}
%The lemma follows from  (\ref{def_nor2}), (\ref{pro_int11}), (\ref{pro_int1b}), (\ref{pro_int3b}) and (\ref{pro_int2b}).
The lemma follows from  (\ref{def_nor2}), (\ref{pro_int11})-(\ref{pro_int2b}).
 \end{proof}
 
The following lemma concerns the errors caused by replacing $\beta(x)$ by $\beta_h(x)$.
\begin{lemma}\label{lem_betah}
Let $v\in\widetilde{H}^2(\Omega)$ and $w\in V_h^{\rm IFE}+H^1(\Omega)$. Then there exists a constant $C$ independent of $h$ and the interface location relative to the mesh such that
\begin{equation}\label{lem_betah1}
|a_h(v,w)-\widetilde{a}_h(v,w)|\leq Ch_\Gamma\|v\|_{H^2(\Omega^+\cup\Omega^-)}\left(\sum_{T\in\mathcal{T}_h^\Gamma}\|\nabla w\|^2_{L^2(T^\triangle)}\right)^{1/2}.
\end{equation}
Furthermore, if $w\in \widetilde{H}^2(\Omega)$, there holds
\begin{equation}\label{lem_betah2}
|a_h(v,w)-\widetilde{a}_h(v,w)|\leq Ch_\Gamma^2\|v\|_{H^2(\Omega^+\cup\Omega^-)}\|w\|_{H^2(\Omega^+\cup\Omega^-)}.
\end{equation}
\end{lemma}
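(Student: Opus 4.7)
The plan is to exploit the fact that $\beta(x)-\beta_h(x)$ vanishes identically outside the mismatch region $\bigcup_{T\in\mathcal{T}_h^\Gamma}T^\triangle$, and then control the remaining integrals by Lemma~\ref{lem_h3}.

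First I would write
\begin{equation*}
a_h(v,w)-\widetilde{a}_h(v,w)=\sum_{T\in\mathcal{T}_h^\Gamma}\int_{T}(\beta-\beta_h)\nabla v\cdot\nabla w\,dx,
\end{equation*}
and observe that on $T^+\cap T_h^+$ we have $\beta=\beta^+=\beta_h$ and on $T^-\cap T_h^-$ we have $\beta=\beta^-=\beta_h$, so the integrand is supported on $T^\triangle=(T^-\cap T_h^+)\cup(T^+\cap T_h^-)$. On $T^-\cap T_h^+$ we have $\beta-\beta_h=\beta^-(x)-\beta^+(x)$ and $\nabla v=\nabla v^-=\nabla v_E^-$, while on $T^+\cap T_h^-$ we have $\beta-\beta_h=\beta^+(x)-\beta^-(x)$ and $\nabla v=\nabla v^+=\nabla v_E^+$. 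Using the uniform bound on $\beta^\pm$ from (\ref{ext_beta}) and the Cauchy--Schwarz inequality, this gives
\begin{equation*}
|a_h(v,w)-\widetilde{a}_h(v,w)|\leq C\sum_{s=+,-}\left(\sum_{T\in\mathcal{T}_h^\Gamma}\|\nabla v_E^s\|_{L^2(T^\triangle)}^2\right)^{1/2}\left(\sum_{T\in\mathcal{T}_h^\Gamma}\|\nabla w\|_{L^2(T^\triangle)}^2\right)^{1/2}.
\end{equation*}

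Second, I would apply Lemma~\ref{lem_h3} componentwise to $\nabla v_E^s\in (H^1(T))^2$ (which is legitimate since $v_E^s\in H^2(\Omega)\subset H^2(T)$):
\begin{equation*}
\|\nabla v_E^s\|_{L^2(T^\triangle)}^2\leq C\bigl(h_T^2\|\nabla v_E^s\|_{L^2(\Gamma\cap T)}^2+h_T^4|v_E^s|_{H^2(T)}^2\bigr).
\end{equation*}
Summing over $T\in\mathcal{T}_h^\Gamma$ and using the global trace inequality $\|\nabla v_E^s\|_{L^2(\Gamma)}\leq C\|v_E^s\|_{H^2(\Omega)}$ together with the extension bound (\ref{extension}), I obtain
\begin{equation*}
\sum_{T\in\mathcal{T}_h^\Gamma}\|\nabla v_E^s\|_{L^2(T^\triangle)}^2\leq Ch_\Gamma^2\|v\|_{H^2(\Omega^+\cup\Omega^-)}^2.
\end{equation*}
Inserting this into the Cauchy--Schwarz bound yields (\ref{lem_betah1}).

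For the sharper bound (\ref{lem_betah2}), when $w\in\widetilde{H}^2(\Omega)$ I would repeat exactly the same argument on the $\nabla w$ factor, splitting it as $\nabla w_E^\pm$ on the two pieces of $T^\triangle$ and applying Lemma~\ref{lem_h3} again. This produces an extra factor of $h_\Gamma$, giving the desired $h_\Gamma^2$ estimate. The only substantive point is the one already handled in the first step: because $v,w$ are only piecewise $H^2$, one must not apply Lemma~\ref{lem_h3} to $\nabla v$ directly across the interface; instead one splits $T^\triangle$ into its two pieces and uses the extensions $v_E^\pm$ (resp.\ $w_E^\pm$) on each piece, so that $\nabla v_E^s\in H^1(T)$ and the lemma applies. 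Everything else is bookkeeping.
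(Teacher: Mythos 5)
Your proposal is correct and follows essentially the same route as the paper: restrict the integrand to $T^\triangle$, apply Cauchy--Schwarz, bound $\|\nabla v_E^s\|_{L^2(T^\triangle)}$ via Lemma~\ref{lem_h3} applied componentwise to $\nabla v_E^s\in (H^1(T))^2$, and conclude with the global trace inequality and the extension bound (\ref{extension}), repeating the argument on the $w$-factor for (\ref{lem_betah2}). The only cosmetic difference is that you spell out the case split on $T^-\cap T_h^+$ and $T^+\cap T_h^-$ more explicitly than the paper does.
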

 \begin{proof}
The Cauchy-Schwarz inequality gives 
 \begin{equation}\label{pr1_betah1}
 \begin{aligned}
 |a_h(v,w)-\widetilde{a}_h(v,w)|&=\left|\sum_{T\in\mathcal{T}_h^\Gamma}\int_{T^\triangle}(\beta-\beta_h)\nabla v\cdot \nabla wdx\right|\\
 &\leq C\left(\sum_{T\in\mathcal{T}_h^\Gamma} \|\nabla v\|^2_{L^2(T^\triangle)}\right)^{1/2}\left(\sum_{T\in\mathcal{T}_h^\Gamma} \|\nabla w\|^2_{L^2(T^\triangle)}\right)^{1/2}.
 \end{aligned}
 \end{equation}
Using Lemma~\ref{lem_h3} and the global trace inequalities on $\Omega^+$ and $\Omega^-$, we can see that
\begin{equation}\label{pr1_betah2}
\begin{aligned}
\sum_{T\in\mathcal{T}_h^\Gamma} \|\nabla v\|^2_{L^2(T^\triangle)}&=\sum_{T\in\mathcal{T}_h^\Gamma}\sum_{s=+,-}\|\nabla v^s\|^2_{L^2(T^\triangle\cap T^s)}\leq\sum_{T\in\mathcal{T}_h^\Gamma}\sum_{s=+,-}\|\nabla v_E^s\|^2_{L^2(T^\triangle) }\\
&\leq C\sum_{T\in\mathcal{T}_h^\Gamma}\sum_{s=+,-} \left(h_T^2\|\nabla v^s\|^2_{L^2(T\cap \Gamma)}+h_T^4|v_E^s|^2_{H^2(T^\triangle)}\right)\\
&\leq Ch_\Gamma^2\sum_{s=+,-}\| \nabla v^s \|^2_{L^2(\Gamma)}+Ch_\Gamma^4\sum_{s=+,-}| v_E^s|^2_{H^2(\Omega)}\\
&\leq Ch_\Gamma^2\sum_{s=+,-} \|v \|^2_{H^2(\Omega^s)}=Ch_\Gamma^2\|v\|^2_{H^2(\Omega^+\cup \Omega^-)}.
\end{aligned}
\end{equation}
The estimate (\ref{lem_betah1}) follows from (\ref{pr1_betah1}) and (\ref{pr1_betah2}). If $w\in \widetilde{H}^2(\Omega)$, then similar to (\ref{pr1_betah2}), 
\begin{equation}\label{pr1_betah3}
\sum_{T\in\mathcal{T}_h^\Gamma} \|\nabla w\|^2_{L^2(T^\triangle)}\leq Ch_\Gamma^2\|w\|^2_{H^2(\Omega^+\cup \Omega^-)}.
\end{equation}
The  estimate (\ref{lem_betah2}) then follows from (\ref{lem_betah1}) and (\ref{pr1_betah3}).
 \end{proof}
 
With these preparations, we are ready to derive the $H^1$ error estimate for the new nonconforming IFE method. 
 \begin{theorem}\label{theo_mainH1}
Let $u$ and $u_h$ be the solutions of  (\ref{weakform}) and (\ref{method2}), respectively.  Then there exists a constant $C$ independent of $h$ and the interface location relative to the mesh such that
\begin{equation}\label{H_1_error_method2}
\interleave u-u_h \interleave_h\leq Ch\|u\|_{H^2(\Omega^+\cup\Omega^-)}.
\end{equation}
\end{theorem}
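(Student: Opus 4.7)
The plan is a Strang-type argument combining coercivity, continuity, and a consistency analysis in which interface-edge contributions cancel thanks to the symmetric flux term in $b_h$. Split $u - u_h = (u - I_h^{\rm IFE} u) + (I_h^{\rm IFE} u - u_h)$. Lemma~\ref{ener_app} controls the first summand by $Ch\,\|u\|_{H^2(\Omega^+\cup\Omega^-)}$. Setting $\xi_h := I_h^{\rm IFE} u - u_h \in V_{h,0}^{\rm IFE}$, Lemma~\ref{lem_Coercivity} together with the norm equivalence of Lemma~\ref{lema_equ} yields $\interleave \xi_h \interleave_h^{\,2} \leq C A_h(\xi_h, \xi_h)$. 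Write $A_h(\xi_h, \xi_h) = A_h(I_h^{\rm IFE} u - u, \xi_h) + \bigl(A_h(u, \xi_h) - \int_\Omega f \xi_h \, dx\bigr)$; the first piece is dispatched by the continuity bound (\ref{conti}) and Lemma~\ref{ener_app}, so the whole problem reduces to estimating the consistency error.

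To analyze it, note that $[u]_e = 0$ on every edge since $u \in H^1_0(\Omega)$, so $s_h(u,\xi_h) = 0$ and $b_h(u,\xi_h) = -\sum_{e \in \mathcal{E}_h^\Gamma} \int_e \{\beta_h \nabla u \cdot \mathbf{n}_e\}_e [\xi_h]_e\,ds$. Elementwise integration by parts on $T^+$ and $T^-$ of each interface element, combined with $[\beta \nabla u \cdot \mathbf{n}]_\Gamma = 0$ on the internal interface arcs, produces
\begin{equation*}
a_h(u,\xi_h) - \int_\Omega f \xi_h\,dx \;=\; \sum_{e \in \mathcal{E}_h} \int_e (\beta \nabla u \cdot \mathbf{n}_e)\,[\xi_h]_e \, ds,
\end{equation*}
with the convention $[\xi_h]_e = \xi_h$ on $e \in \mathcal{E}_h^b$ (where $\int_e \xi_h\,ds = 0$ by the definition of $V_{h,0}^{\rm IFE}$). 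The crucial observation is that $\Gamma$ and $\Gamma_h$ meet every mesh edge at the same point by construction, so $\beta = \beta_h$ pointwise on every edge; since $\nabla u$ has single-valued traces on $e$, this forces $\beta\nabla u \cdot \mathbf{n}_e = \{\beta_h \nabla u \cdot \mathbf{n}_e\}_e$ on each interface edge, and the interface-edge contributions cancel exactly against the corresponding terms in $b_h(u,\xi_h)$. For the remaining non-interface edges I use $\int_e [\xi_h]_e\,ds = 0$ to subtract $P_0^e(\beta \nabla u \cdot \mathbf{n}_e)$, and then combine (\ref{un1})--(\ref{H1_4}) with (\ref{H1_3}) to obtain a bound $Ch\,\|u\|_{H^2(\Omega^+\cup\Omega^-)}\,\interleave \xi_h \interleave_h$. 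Finally, $(\widetilde{a}_h - a_h)(u,\xi_h)$ is controlled by Lemma~\ref{lem_betah} together with the trivial inequality $\sum_{T\in\mathcal{T}_h^\Gamma}\|\nabla \xi_h\|_{L^2(T^\triangle)}^2 \leq \|\xi_h\|_{\widetilde{a}_h}^2$, giving a contribution of order $Ch_\Gamma \|u\|_{H^2(\Omega^+\cup\Omega^-)}\,\interleave \xi_h \interleave_h$. Assembling all pieces yields (\ref{H_1_error_method2}).

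I expect the main obstacle to be precisely the interface-edge cancellation. In Section~\ref{sec_nonconformingIFE} these same edges produced the offending $O(h^{1/2})$ inconsistency, and Remark~\ref{remark_suboptimal} showed that bound is sharp; without a stabilization, there is nothing to cancel $\{\beta_h \nabla u \cdot \mathbf{n}_e\}_e$ against. What saves the scheme here is the symmetric flux term $-\sum_{e}\int_e \{\beta_h \nabla v_h \cdot \mathbf{n}_e\}_e [u_h]_e\, ds$ in $b_h$, which, when tested against the exact solution and combined with the geometric identity $\beta \equiv \beta_h$ on mesh edges, kills the bad term exactly rather than merely shrinking it. The remaining ingredients — the elementwise integration by parts valid under $\widetilde{H}^2$ regularity, the $T^\triangle$ estimate of Lemma~\ref{lem_betah}, and the Strang assembly — are routine.
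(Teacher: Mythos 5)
Your proposal is correct and follows essentially the same route as the paper: a Strang-type assembly (you unfold the second Strang lemma via coercivity on $\xi_h=I_h^{\rm IFE}u-u_h$ rather than citing it, which is equivalent), Lemma~\ref{ener_app} for the approximation term, the identity $A_h(u-u_h,w_h)=\sum_{e\in\mathcal{E}_h^{non}}\int_e\beta\nabla u\cdot\textbf{n}_e[w_h]_e\,ds+\widetilde{a}_h(u,w_h)-a_h(u,w_h)$ obtained from exactly the same interface-edge cancellation (using $\beta=\beta_h$ on edges, $[u]_e=0$ and $r_e([u]_e)=0$), the $P_0^e$-subtraction bounds (\ref{un1})--(\ref{H1_4}) with (\ref{H1_3}) on non-interface edges, and Lemma~\ref{lem_betah} for the $\beta$ versus $\beta_h$ discrepancy. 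No gaps.
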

\begin{proof}
From Lemma~\ref{lem_Coercivity} and Lemma~\ref{lema_equ}, we know that the bilinear form $A_h(\cdot,\cdot)$ is also coercive on $V_h^{\rm IFE}$ with respect to the norm $\interleave\cdot\interleave_h$. Thus,  the second Strang lemma implies
\begin{equation}\label{h1_pro1}
\interleave u-u_h\interleave_h \leq C\left\{\inf_{v_h\in V_{h,0}^{\rm IFE}}\interleave u-v_h\interleave_h+\sup_{w_h\in V_{h,0}^{\rm IFE}\backslash\{0\}}\frac{|A_h(u-u_h,w_h)|}{\interleave w_h\interleave_h}\right\}.
\end{equation}
 The first term of the right-hand side of (\ref{h1_pro1}) can be bounded by Lemma~\ref{ener_app}
\begin{equation}\label{H1_est2}
\inf_{v_h\in V_{h,0}^{\rm IFE}}\interleave u-v_h\interleave_h\leq \interleave u-I_h^{\rm IFE}u\interleave_h\leq Ch\|u\|_{H^2(\Omega^+\cup\Omega^-)}.
\end{equation}
Multiplying (\ref{p1.1}) by $w_h\in V_{h,0}^{\rm IFE}$ and using integration by parts, we obtain 
\begin{equation}\label{consis1}
\begin{aligned}
\int_{\Omega}fw_hdx&=a_h(u,w_h)+s_h(u,w_h)-\sum_{e\in\mathcal{E}_h}\int_e\{\beta\nabla u\cdot\textbf{n}_e\}_e[w_h]_e+\{\beta\nabla w_h\cdot\textbf{n}_e\}_e[u]_eds\\
&=A_h(u,w_h)-\sum_{e\in\mathcal{E}_h^{non}}\int_e\beta\nabla u\cdot\textbf{n}_e[w_h]_eds+a_h(u,w_h)-\widetilde{a}_h(u,w_h),
\end{aligned}
\end{equation}
where  $\beta_h(x)=\beta(x)$ on  edges, $[u]_e=0$, $[\beta\nabla u\cdot\textbf{n}_e]_e=0$ and $r_e([u]_e)=0$ are used.
It follows from (\ref{consis1}) and (\ref{method2}) that 
\begin{equation}\label{pro_L2_ex}
A_h(u-u_h,w_h)=\sum_{e\in\mathcal{E}_h^{non}}\int_e\beta\nabla u\cdot\textbf{n}_e[w_h]_eds+\widetilde{a}_h(u,w_h)-a_h(u,w_h).
\end{equation}
Hence, by (\ref{H1_2}), (\ref{H1_3}), (\ref{H1_4}), and Lemma~\ref{lem_betah}, we have
\begin{equation*}
|A_h(u-u_h,w_h)|\leq Ch\|u\|_{H^2(\Omega^+\cup\Omega^-)}\interleave w_h \interleave_h,
\end{equation*}
which, together with (\ref{h1_pro1}) and (\ref{H1_est2}), completes the proof of the theorem.
\end{proof}

The  optimal $L^2$ error estimate is also derived by using the standard duality argument below.
\begin{theorem}\label{theo_mainL2}
Let $u$ and $u_h$ be the solutions of (\ref{weakform}) and (\ref{method2}), respectively.  Then there exists a constant $C$ independent of $h$ and the interface location relative to the mesh such that
\begin{equation}\label{L2_err_method2}
\|u-u_h\|_{L^2(\Omega)}\leq Ch^2\|u\|_{H^2(\Omega^+\cup \Omega^-)}.
\end{equation}
\end{theorem}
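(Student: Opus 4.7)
The plan is to apply the Aubin--Nitsche duality argument, adapted to the nonconforming IFE setting. I begin by introducing the adjoint problem: let $z\in H_0^1(\Omega)\cap\widetilde{H}^2(\Omega)$ solve
\begin{equation*}
-\nabla\cdot(\beta\nabla z)=u-u_h \text{ in } \Omega\setminus\Gamma,\quad [z]_\Gamma=0,\quad [\beta\nabla z\cdot\textbf{n}]_\Gamma=0,\quad z|_{\partial\Omega}=0.
\end{equation*}
By the regularity result Theorem~\ref{regular_assumption}, $\|z\|_{H^2(\Omega^+\cup\Omega^-)}\leq C\|u-u_h\|_{L^2(\Omega)}$; this regularity is what promotes an order-$h$ estimate for the dual approximation into the extra factor needed for the $L^2$ rate.

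Multiplying the strong equation by $u-u_h$, integrating by parts element-by-element, and using that $\beta\nabla z\cdot\textbf{n}_e$ is single-valued across every edge (thanks to $z\in\widetilde{H}^2(\Omega)$ and the imposed interface flux condition), I obtain
\begin{equation*}
\|u-u_h\|_{L^2(\Omega)}^2=a_h(u-u_h,z)-\sum_{e\in\mathcal{E}_h}\int_e[u-u_h]_e\,\beta\nabla z\cdot\textbf{n}_e\,ds.
\end{equation*}
I then insert $\eta_h:=I_h^{\rm IFE}z\in V_{h,0}^{\rm IFE}$ and split $a_h(u-u_h,z)=a_h(u-u_h,z-\eta_h)+a_h(u-u_h,\eta_h)$. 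For the first summand, Cauchy--Schwarz together with Theorem~\ref{theorem_interpolation} applied to $z$ and Theorem~\ref{theo_mainH1} applied to $u-u_h$ deliver a bound of order $h^2\|u\|_{H^2(\Omega^+\cup\Omega^-)}\|u-u_h\|_{L^2(\Omega)}$. For the second, I invoke the consistency identity (\ref{pro_L2_ex}) from the proof of Theorem~\ref{theo_mainH1},
\begin{equation*}
A_h(u-u_h,\eta_h)=\sum_{e\in\mathcal{E}_h^{non}}\int_e\beta\nabla u\cdot\textbf{n}_e[\eta_h]_e\,ds+\widetilde{a}_h(u,\eta_h)-a_h(u,\eta_h),
\end{equation*}
combined with $A_h=\widetilde{a}_h+b_h+s_h$ and $a_h=\widetilde{a}_h+(a_h-\widetilde{a}_h)$, to rewrite $a_h(u-u_h,\eta_h)$ as a non-interface-edge jump term, a $b_h$ contribution, an $s_h$ contribution, and two $(a_h-\widetilde{a}_h)$ residuals.

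Next I bound each of these by $Ch^2\|u\|_{H^2(\Omega^+\cup\Omega^-)}\|u-u_h\|_{L^2(\Omega)}$: on non-interface edges I use $\int_e[\eta_h]_e\,ds=0$ together with the sharp trace bounds (\ref{un1})--(\ref{un2}) for $\beta\nabla u\cdot\textbf{n}_e-P_0^e(\beta\nabla u\cdot\textbf{n}_e)$; the $b_h$ and $s_h$ contributions exploit $[z]_e=0$, so that $[\eta_h]_e=[\eta_h-z]_e$ is small, with Lemma~\ref{ener_app} applied to $z$ and Theorem~\ref{theo_mainH1} applied to $u-u_h$ providing the matching factors; the $(a_h-\widetilde{a}_h)$ residuals are absorbed via Lemma~\ref{lem_betah}. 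The leftover edge sum $\sum_e\int_e[u-u_h]_e\,\beta\nabla z\cdot\textbf{n}_e\,ds$ is treated by Cauchy--Schwarz in the factorized form $|e|^{-1/2}\|[u-u_h]_e\|_{L^2(e)}\cdot|e|^{1/2}\|\beta\nabla z-P_0^e(\beta\nabla z)\|_{L^2(e)}$, using $\int_e[u-u_h]_e\,ds=0$ (nonconformity of the IFE space plus continuity of $u$), together with Theorem~\ref{theo_mainH1} and a standard trace-plus-interpolation estimate on $z$.

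The main obstacle I expect is bookkeeping rather than any single deep step: every residual or edge integral must be paired with one $O(h)$ factor from $z-I_h^{\rm IFE}z$ and one $O(h)$ factor from $u-u_h$ (or a single $O(h^2)$ factor from Lemma~\ref{lem_betah}), and after summation one power of $\|u-u_h\|_{L^2(\Omega)}$ must be cancelled to produce (\ref{L2_err_method2}). Particular care is needed on interface edges, where $[u_h]_e$ does not vanish and the $\beta$--$\beta_h$ discrepancy on $T^\triangle$ enters; the sharp interface-edge interpolation estimate (\ref{pro_int2b}) is precisely what guarantees the extra $O(h)$ factor for $z-\eta_h$ along interface edges, while Lemma~\ref{lem_betah} controls the coefficient mismatch directly at the $O(h^2)$ level.
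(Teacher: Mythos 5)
Your overall strategy (Aubin--Nitsche duality with the regularity of Theorem~\ref{regular_assumption}, insertion of $\eta_h=I_h^{\rm IFE}z$, and the consistency identity (\ref{pro_L2_ex})) is reasonable, and it differs from the paper's proof mainly in that the paper also introduces the \emph{discrete} dual solution $z_h$ and uses the symmetric splitting $A_h(u-u_h,z-z_h)+A_h(u-u_h,z_h)+A_h(u_h,z-z_h)+(a_h-\widetilde{a}_h)(u,z)$, which keeps every edge term inside $A_h$. However, as written your argument has a genuine gap on the interface edges. After element-wise integration by parts you are left with $\sum_{e}\int_e[u-u_h]_e\,\beta\nabla z\cdot\textbf{n}_e\,ds$, which you propose to bound by $|e|^{-1/2}\|[u-u_h]_e\|_{L^2(e)}\cdot|e|^{1/2}\|\beta\nabla z\cdot\textbf{n}_e-P_0^e(\beta\nabla z\cdot\textbf{n}_e)\|_{L^2(e)}$ together with a ``standard trace-plus-interpolation estimate on $z$''. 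On an interface edge $e\in\mathcal{E}_h^\Gamma$ that estimate fails: by (\ref{edge_jump}) the flux $\beta\nabla z\cdot\textbf{n}_e$ jumps across $e\cap\Gamma$ by $[\beta]_\Gamma(\nabla z\cdot\textbf{t})(\textbf{t}\cdot\textbf{n}_e)$, so $\|\beta\nabla z\cdot\textbf{n}_e-P_0^e(\beta\nabla z\cdot\textbf{n}_e)\|^2_{L^2(e)}$ is generically of order $|e|$ rather than $|e|^2$ --- this is exactly the content of Lemma~\ref{lem_edge_est} and Remark~\ref{remark_suboptimal} --- and your factorization then yields only $O(h^{3/2})$. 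A companion defect sits in your treatment of $b_h(u-u_h,\eta_h)$: the identity $[\eta_h]_e=[\eta_h-z]_e$ controls only the half of $b_h$ carrying $\{\beta_h\nabla(u-u_h)\cdot\textbf{n}_e\}_e[\eta_h]_e$; the other half, $\sum_{e\in\mathcal{E}_h^\Gamma}\int_e\{\beta_h\nabla\eta_h\cdot\textbf{n}_e\}_e[u-u_h]_e\,ds$, is not $O(h^2)$ on its own, since $\sum_{e\in\mathcal{E}_h^\Gamma}|e|\,\|\{\beta_h\nabla z\}_e\|^2_{L^2(e)}$ is only $O(h)$ by the strip argument of Lemma~\ref{strip}.

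These two defective pieces must be combined rather than estimated separately: on an interface edge one has $\{\beta_h\nabla\eta_h\cdot\textbf{n}_e\}_e-\beta\nabla z\cdot\textbf{n}_e=\{\beta_h\nabla(\eta_h-z)\cdot\textbf{n}_e\}_e$ (using that $\beta_h=\beta$ on edges and that the exact flux is single-valued across $e$), and then $\sum_{e\in\mathcal{E}_h^\Gamma}|e|\,\|\{\beta_h\nabla(\eta_h-z)\}_e\|^2_{L^2(e)}\leq Ch^2\|z\|^2_{H^2(\Omega^+\cup\Omega^-)}$ by (\ref{pro_int2b}). This cancellation between the consistency error and $b_h$ is precisely what the new method is designed to produce, and it is the step your proposal skips. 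The paper avoids the issue altogether by never re-exposing the raw flux of $z$: with $z_h$ and the identity (\ref{consis1}), the only surviving edge sums in (\ref{pro_L2_N1}) run over non-interface edges, where the $P_0^e$ subtraction does deliver the full $O(h^2)$ gain.
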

\begin{proof}
Let $z\in H_0^1(\Omega)$ be the solution of the following auxiliary problem
\begin{equation}\label{auxi}
a(v,z)=\int_\Omega (u-u_h)vdx\quad\forall v\in H_0^1(\Omega).
\end{equation}
Since $u-u_h\in L^2(\Omega)$, it follows from Theorem~\ref{regular_assumption} that
\begin{equation}\label{reg_L2}
z\in \widetilde{H}^2(\Omega)~~\mbox{ and }~~\|z\|_{H^2(\Omega^+\cup\Omega^-)}\leq C\|u-u_h\|_{L^2(\Omega)}.
\end{equation}
Let $z_h\in V_{h,0}^{\rm IFE}$ be the solution of   the new nonconforming IFE method applied to the auxiliary problem~(\ref{auxi}), i.e.,
\begin{equation}\label{pro_L21}
A_h(v_h,z_h)=\int_{\Omega}(u-u_h)v_hdx\quad \forall v_h\in V_{h,0}^{\rm IFE}.
\end{equation}
Recalling that $a_h(\cdot,\cdot)=a(\cdot,\cdot)$ on $H^1_0(\Omega)$, and applying  (\ref{auxi}) and (\ref{pro_L21}), we have
\begin{equation}\label{pro_L22}
\begin{aligned}
\|u&-u_h\|_{L^2(\Omega)}^2=a(u,z)-A_h(u_h,z_h)=A_h(u,z)-A_h(u_h,z_h)-\widetilde{a}_h(u,z)+a_h(u,z)\\
&=A_h(u-u_h,z-z_h)+A_h(u-u_h,z_h)+A_h(u_h,z-z_h)+\left(a_h(u,z)-\widetilde{a}_h(u,z)\right),
\end{aligned}
\end{equation}
where the relation $b_h(u,z)=s_h(u,z)=0$ is used in the second identity since $[u]_e=[v]_e=0$ for all edges.
Lemma~\ref{lem_betah}  provides the estimate for the last term
\begin{equation}\label{pro_L22_1}
a_h(u,z)-\widetilde{a}_h(u,z)\leq Ch^2\|u\|_{H^2(\Omega^+\cup\Omega^-)}\|z\|_{H^2(\Omega^+\cup\Omega^-)}.
\end{equation}
The first terms on the right-hand side of (\ref{pro_L22}) can be estimated using Theorem~\ref{theo_mainH1}, 
\begin{equation}\label{pro_L22_2}
A_h(u-u_h,z-z_h)\leq \interleave u-u_h\interleave_h \interleave z-z_h\interleave_h\leq Ch^2\|u\|_{H^2(\Omega^+\cup\Omega^-)}\|z\|_{H^2(\Omega^+\cup\Omega^-)}.
\end{equation}
We rewrite the second term on the right-hand side of (\ref{pro_L22}) as
\begin{equation}\label{pro_L2_N00}
A_h(u-u_h,z_h)=A_h(u-u_h,z_h-I_h^{\rm IFE}z)+A_h(u-u_h,I_h^{\rm IFE}z_h).
\end{equation}
It is easy to see that
\begin{equation}\label{pro_L2_N0}
A_h(u-u_h,z_h-I_h^{\rm IFE}z)\leq \interleave u-u_h\interleave_h\interleave z_h-I_h^{\rm IFE}z\interleave_h\leq Ch^2\|u\|_{H^2(\Omega^+\cup\Omega^-)}\|z\|_{H^2(\Omega^+\cup\Omega^-)}.
\end{equation}
From (\ref{pro_L2_ex}), we have
\begin{equation}\label{pro_L2_N1}
A_h(u-u_h,I_h^{\rm IFE}z_h)=\sum_{e\in\mathcal{E}_h^{non}}\int_e\beta\nabla u\cdot\textbf{n}_e[I_h^{\rm IFE}z_h]_eds+\widetilde{a}_h(u,I_h^{\rm IFE}z_h)-a_h(u,I_h^{\rm IFE}z_h).
\end{equation}
Since $[z]_e=0$, the first term on the right-hand side can be estimated as 
\begin{equation}\label{pro_L2_N2}
\begin{aligned}
\sum_{e\in\mathcal{E}_h^{non}}\int_e\beta\nabla u\cdot\textbf{n}_e[I_h^{\rm IFE}z_h]_eds&=\sum_{e\in\mathcal{E}_h^{non}}\int_e(\beta\nabla u\cdot\textbf{n}_e-P_0^e(\beta\nabla u\cdot\textbf{n}_e))[I_h^{\rm IFE}z_h-z]_eds\\
&\leq Ch^2\|u\|_{H^2(\Omega^+\cup\Omega^-)}\|z\|_{H^2(\Omega^+\cup\Omega^-)},
\end{aligned}
\end{equation}
where the Cauchy-Schwarz inequality, (\ref{H1_4}), the standard trace inequality and Theorem~\ref{theorem_interpolation} are used. Applying Lemma~\ref{lem_betah} and Theorem~\ref{theorem_interpolation} again we obtain 
\begin{equation}\label{pro_L2_N3}
\begin{aligned}
|\widetilde{a}_h(u,I_h^{\rm IFE}z_h)-a_h(u,I_h^{\rm IFE}z_h)|&\leq |\widetilde{a}_h(u,z)-a_h(u,z)|\\
&\quad+|\widetilde{a}_h(u,I_h^{\rm IFE}z_h-z)-a_h(u,I_h^{\rm IFE}z_h-z)|\\
&\leq Ch^2\|u\|_{H^2(\Omega^+\cup\Omega^-)}\|z\|_{H^2(\Omega^+\cup\Omega^-)}.
\end{aligned}
\end{equation}
Combining (\ref{pro_L2_N00})-(\ref{pro_L2_N3}), we find
\begin{equation}\label{pro_L22_3}
A_h(u-u_h,z_h)\leq Ch^2\|u\|_{H^2(\Omega^+\cup\Omega^-)}\|z\|_{H^2(\Omega^+\cup\Omega^-)},
\end{equation}
and similarly,
\begin{equation}\label{pro_L22_4}
A_h(u_h,z-z_h)\leq Ch^2\|u\|_{H^2(\Omega^+\cup\Omega^-)}\|z\|_{H^2(\Omega^+\cup\Omega^-)}.
\end{equation}
% Putting (\ref{pro_L22}), (\ref{pro_L22_1}), (\ref{pro_L22_2}), (\ref{pro_L22_3}) and (\ref{pro_L22_4}) together, we arrive at the estimate 
Applying  (\ref{pro_L22})-(\ref{pro_L22_2}), (\ref{pro_L22_3})-(\ref{pro_L22_4}), we arrive at the estimate
\begin{equation*}
\|u-u_h\|^2_{L^2(\Omega)}\leq Ch^2\|u\|_{H^2(\Omega^+\cup\Omega^-)}\|z\|_{H^2(\Omega^+\cup\Omega^-)},
\end{equation*}
which together with the regularity result (\ref{reg_L2}) implies the estimate (\ref{L2_err_method2}).
\end{proof}

\section{Numerical examples}\label{sec_num}
In this section, we present some numerical examples to validate the theoretical analysis.  To avoid redundancy, we only report numerical results of IFE methods based on the Crouzeix-Raviart  element since the  results of  IFE methods based on  the rotated-$Q_1$ element are almost the same. 
We examine the convergence rate of IFE solutions using the following norms
\begin{equation*}
|e_h|_{H^1}:=\left(\sum_{T\in\mathcal{T}_h}\|\sqrt{\beta_h}\nabla (u-u_h)\|^2_{L^2(T)}\right)^{1/2} \qquad \mbox{ and }\qquad \|e_h\|_{L^2}:=\|u-u_h\|_{L^2(\Omega)}.
\end{equation*}

For comparison,  we replace $\beta(x)$ by $\beta_h(x)$ in the nonconforming IFE method (IFEM) without penalties (\ref{method1}) in our computation. Thus, the difference between  the  nonconforming IFEM without penalties and our new nonconforming IFEM  (\ref{method2})  is the terms $b_h(\cdot,\cdot)$ and $s_h(\cdot,\cdot)$.  In view of the analysis for our new method,  the error resulting from  replacing $\beta(x)$ by $\beta_h(x)$ does not affect the error estimates  in Theorem~\ref{the_H1_err} for the  nonconforming IFEM without penalties.

In all numerical examples, we set $\Omega=(-1,1)\times(-1,1)$ and use uniform meshes obtained as follows. We first partition the domain into $N\times N$ congruent rectangles, and then obtain the triangulation by cutting the rectangles along one of diagonals in the same direction (see Figure~\ref{interface_edge}).  The interface $\Gamma$ and the subdomains $\Omega^\pm$ are determined by a given function $\varphi(x_1,x_2)$, i.e., $\Gamma=\{(x_1,x_2)\in \Omega : \varphi(x_1,x_2)=0\}$, $\Omega^+=\{(x_1,x_2)\in \Omega : \varphi(x_1,x_2)>0\}$ and $\Omega^-=\{(x_1,x_2)\in \Omega : \varphi(x_1,x_2)<0\}$.

\subsection{A counter example with $\nabla u\cdot {\rm\mathbf{t}}\not=0$ on $\Gamma$}\label{ex1}
We use this example to show that the nonconforming IFEM without penalties  does not converge optimally, although the integral values on edges are used as degrees of freedom. 

 {\em Example}~\ref{ex1}.  We set $\varphi(x_1,x_2)= x_1^2+x_2^2-r_0^2$. Let $(r,\theta)$ be the polar coordinate of  $x=(x_1,x_2)$. The exact solution is chosen as $u(x)=j(x)v(x)\omega(x)$, where $\omega(x)=\sin(\theta)$,
\begin{equation*}
j(x)=\left\{
\begin{aligned}
&\exp\left(-\frac{1}{1-(r-r_0)^2/\eta^2}\right)~&\mbox{ if } |r-r_0|<\eta,\\
&0 &\mbox{ if } |r-r_0|\geq\eta,
\end{aligned}\right.
\end{equation*}
and
\begin{equation*}
v(x)=\left\{
\begin{aligned}
&1+(r^2-r_0^2)/\beta^+(x)~&\mbox{ if } x\in\Omega^+,\\
&1+(r^2-r_0^2)/\beta^-(x)~&\mbox{ if } x\in\Omega^-.
\end{aligned}\right.
\end{equation*}
Let $r_0=0.5$, $\eta=0.45$, $\beta^+(x)$ and $\beta^-(x)$ be positive constants. It is easy to verify that the jump condition (\ref{p1.2})-(\ref{p1.3}) is satisfied and $\nabla u\cdot {\rm\mathbf{t}}\not=0$ on $\Gamma$. We test two cases:  $(\beta^+,\beta^-)=(10,1000)$ and  $(\beta^+,\beta^-)=(1000,10)$. The exact solutions of these two cases are plotted in Figure~\ref{ex1_fig}. 

\begin{figure} [htbp]
\centering
\subfigure{ %% label for first subfigure
\includegraphics[width=0.45\textwidth]{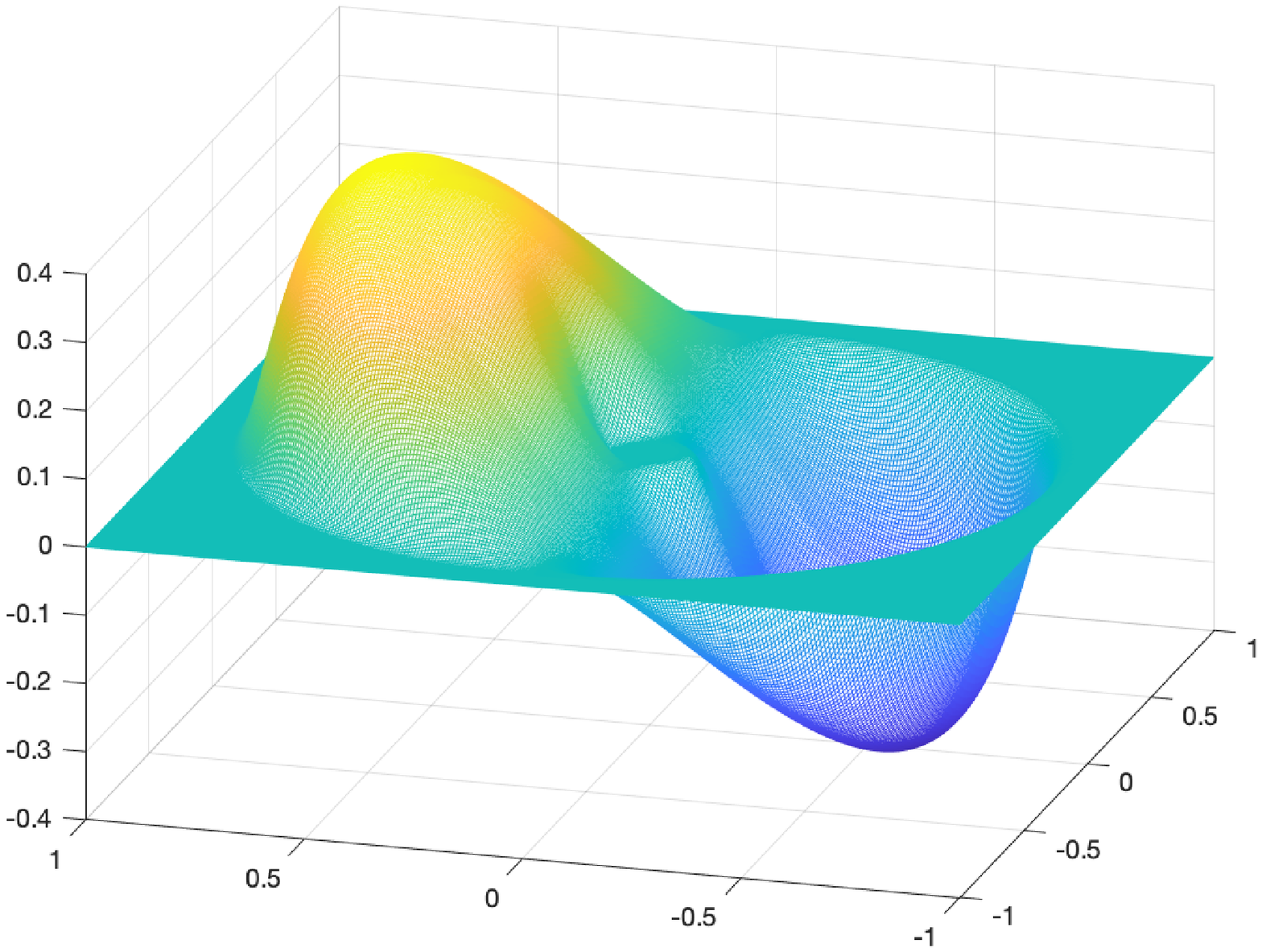}}
\subfigure{%% label for second subfigure
\includegraphics[width=0.45\textwidth]{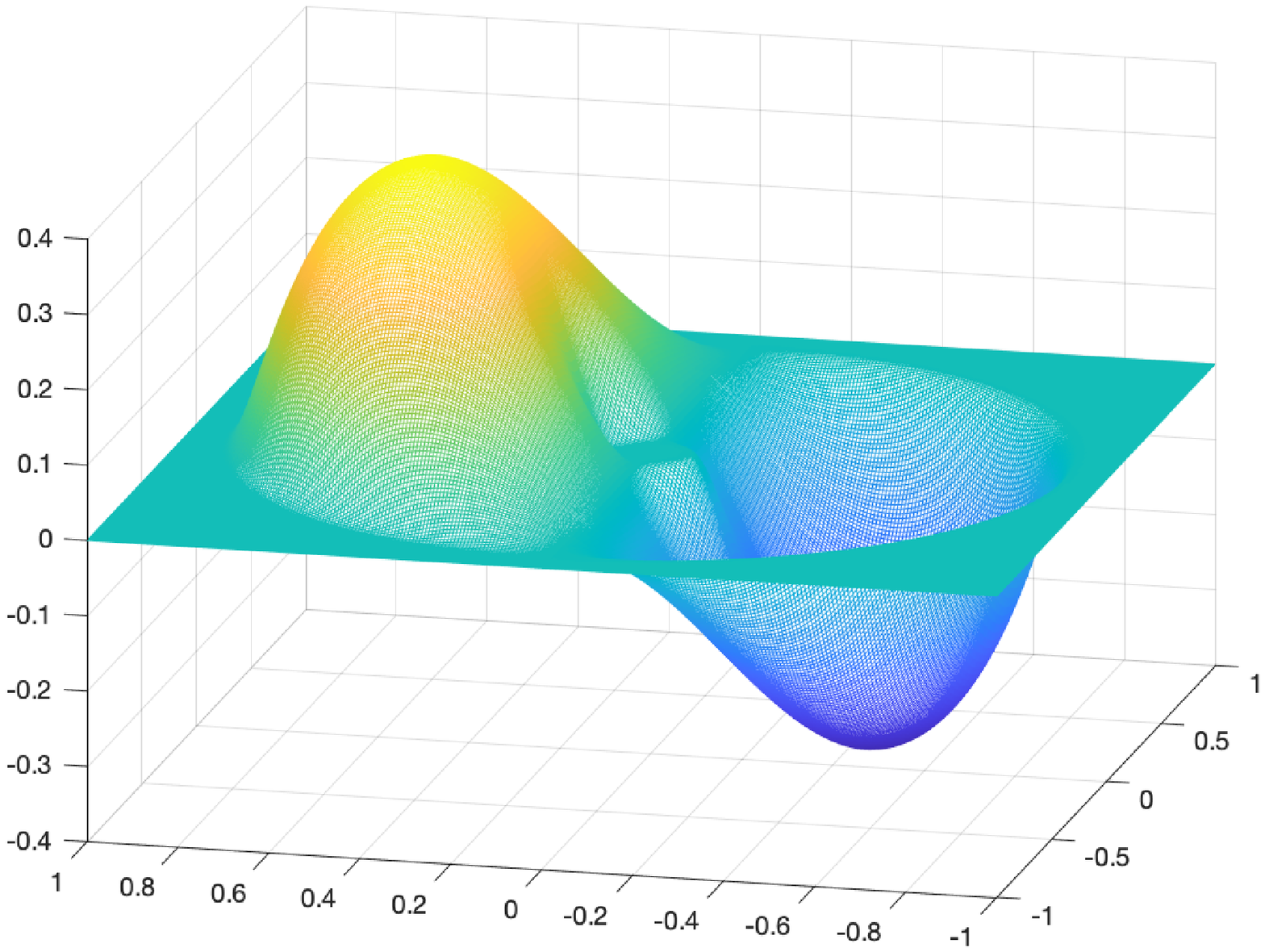}}
 \caption{Exact solutions of  {\em Example}~\ref{ex1}. Left: $(\beta^+,\beta^-)=(10,1000)$; Right: $(\beta^+,\beta^-)=(1000,10).$ \label{ex1_fig}} %% label for entire figure
\end{figure}

We report numerical results in Tables~\ref{ex1_biao11000}-\ref{ex1_biao10001} which clearly confirm our theoretical analysis. The second and third columns in Tables~\ref{ex1_biao11000}-\ref{ex1_biao10001} indicate suboptimal convergence rates: $\|e_h\|_{L^2}\approx O(h)$,  $|e_h|_{H^1}\approx O(h^{1/2})$ for the nonconforming IFEM without penalties (\ref{method1}). When the terms $b_h(\cdot,\cdot)$ and $s_h(\cdot,\cdot)$ are added to the scheme, i.e., the new nonconforming IFEM (\ref{method2}), we observe the optimal convergence rates (see last two columns in Tables~\ref{ex1_biao11000}-\ref{ex1_biao10001}).

We also use this example to test the classic IFE method \cite{Li2003new, he2012convergence} where the nodal values are used as degrees of freedom  and no penalties are included. The suboptimal convergence rates are also observed (i.e., $O(h^{1/2})$ in the $H^1$ norm and $O(h)$ in the $L^2$ norm), which indicates that the error estimate in \cite{he2012convergence} is sharp.  To avoid redundancy, we do not list the numerical results here.

\begin{table}[H]
\caption{Numerical results of  {\em Example}~\ref{ex1} with $(\beta^+,\beta^-)=(10,1000)$.\label{ex1_biao11000}}
\begin{center}
\begin{tabular}{|c|c c|c c|c c|c c|}
  \hline
  & \multicolumn{4}{c|}{ Nonconforming IFEM without penalties} & \multicolumn{4}{c|}{New nonconforming IFEM}\\ \hline
       $N$  &  $\|e_h\|_{L^2}$   &  rate   &  $|e_h|_{H^1}$  &  rate  &  $\|e_h\|_{L^2}$  &  rate  & $|e_h|_{H^1}$  &  rate    \\ \hline
      8  & 2.221E$-$01  &         & 2.140E$+$01   &            & 1.617E$-$01    &               &  1.781E$+$01   &                   \\ \hline
    16  & 7.650E$-$02  & 1.54  & 1.037E$+$01  & 1.05    &  4.414E$-$02   &    1.87   &   6.889E$+$00   &   1.37    \\ \hline
    32  & 1.745E$-$02  & 2.13  & 5.970E$+$00  & 0.80    &  5.989E$-$03   &   2.88   &   3.851E$+$00   &   0.84    \\ \hline
    64  & 7.322E$-$03  & 1.25  & 3.597E$+$00  & 0.73    &  7.855E$-$04   &   2.93   &   1.784E$+$00   &   1.11    \\ \hline
  128  & 3.204E$-$03  & 1.19  & 2.309E$+$00  & 0.64    &  1.935E$-$04   &   2.02   &   8.932E$-$01    &   1.00    \\ \hline
  256  & 1.514E$-$03  & 1.08  & 1.548E$+$00  & 0.58    &  4.836E$-$05   &   2.00   &   4.461E$-$01    &   1.00    \\ \hline
  512  & 7.276E$-$04  & 1.06  & 1.056E$+$00  & 0.55    &  1.197E$-$05   &   2.01   &   2.229E$-$01    &   1.00    \\ \hline
1024  & 3.603E$-$04  & 1.01  &  7.378E$-$01  & 0.52    &  2.992E$-$06   &   2.00  &    1.114E$-$01    &   1.00    \\ \hline
\end{tabular}
\end{center}
\end{table}

\begin{table}[H]
\caption{Numerical results of  {\em Example}~\ref{ex1} with $(\beta^+,\beta^-)=(1000,10)$.\label{ex1_biao10001}}
\begin{center}
\begin{tabular}{|c|c c|c c|c c|c c|}
  \hline
  & \multicolumn{4}{c|}{Nonconforming IFEM without penalties} & \multicolumn{4}{c|}{New nonconforming IFEM}\\ \hline
       $N$  &  $\|e_h\|_{L^2}$   &  rate   &  $|e_h|_{H^1}$  &  rate  &  $\|e_h\|_{L^2}$  &  rate  & $|e_h|_{H^1}$  &  rate    \\ \hline
    8   &  1.736E$-$01   &          & 4.641E$+$01   &          &  1.540E$-$01  &             &    4.611E$+$01   &                      \\ \hline
   16  &  7.679E$-$02   & 1.18  &   1.686E$+$01 & 1.46  &  6.402E$-$02  &    1.27   &   1.579E$+$01  &    1.55     \\ \hline
   32  &  1.186E$-$02   & 2.69  &  8.927E$+$00  &  0.92 &  5.868E$-$03  &    3.45   &   8.055E$+$00   &   0.97     \\ \hline
   64  &  5.188E$-$03   & 1.19  &  4.822E$+$00  &  0.89 &  8.575E$-$04  &    2.77   &   3.888E$+$00    &  1.05     \\ \hline
  128 &  2.242E$-$03   &  1.21 &  2.802E$+$00  &  0.78 &  1.944E$-$04  &    2.14   &    1.949E$+$00   &   1.00     \\ \hline
  256 &  1.061E$-$03   &  1.08 &  1.746E$+$00  &  0.68 &  4.841E$-$05  &    2.01   &    9.738E$-$01    &  1.00     \\ \hline
  512 &  5.043E$-$04   &  1.07 &  1.133E$+$00  &  0.62 &  1.218E$-$05  &    1.99   &    4.868E$-$01    &  1.00     \\ \hline
 1024&  2.483E$-$04   &  1.02 &  7.654E$-$01   &  0.57 &  2.982E$-$06  &    2.03   &    2.434E$-$01    &  1.00     \\ \hline
\end{tabular}
\end{center}
\end{table}

\subsection{An example with variable coefficients and a non-convex interface}\label{ex2}
 {\em Example}~\ref{ex2}. We set
 $$\varphi(x_1,x_2)=(3(x_1^2+x_2^2)-x_1)^2-x_1^2-x_2^2+0.02.$$
 The exact solution is chosen as $u(x)=\varphi(x)/\beta(x)$, where
 \begin{equation*}
 \beta(x_1,x_2)=\left\{
 \begin{aligned}
 &\beta^+(x_1,x_2)=300(2+\sin(6x_1+6x_2))& \mbox{ if } \varphi(x_1,x_2)>0,\\
 &\beta^-(x_1,x_2)=2+\cos(6x_1+6x_2) &\mbox{ if } \varphi(x_1,x_2)<0.
 \end{aligned}\right.
 \end{equation*}
 It is easy to verify that the jump condition (\ref{p1.2})-(\ref{p1.3}) is satisfied and $\nabla u\cdot {\rm\mathbf{t}}=0$ on $\Gamma$. The exact solution and the interface are plotted in Figure~\ref{ex2_fig}.
 
  \begin{figure} [htbp]
\centering
\subfigure{ %% label for first subfigure
\includegraphics[width=0.45\textwidth]{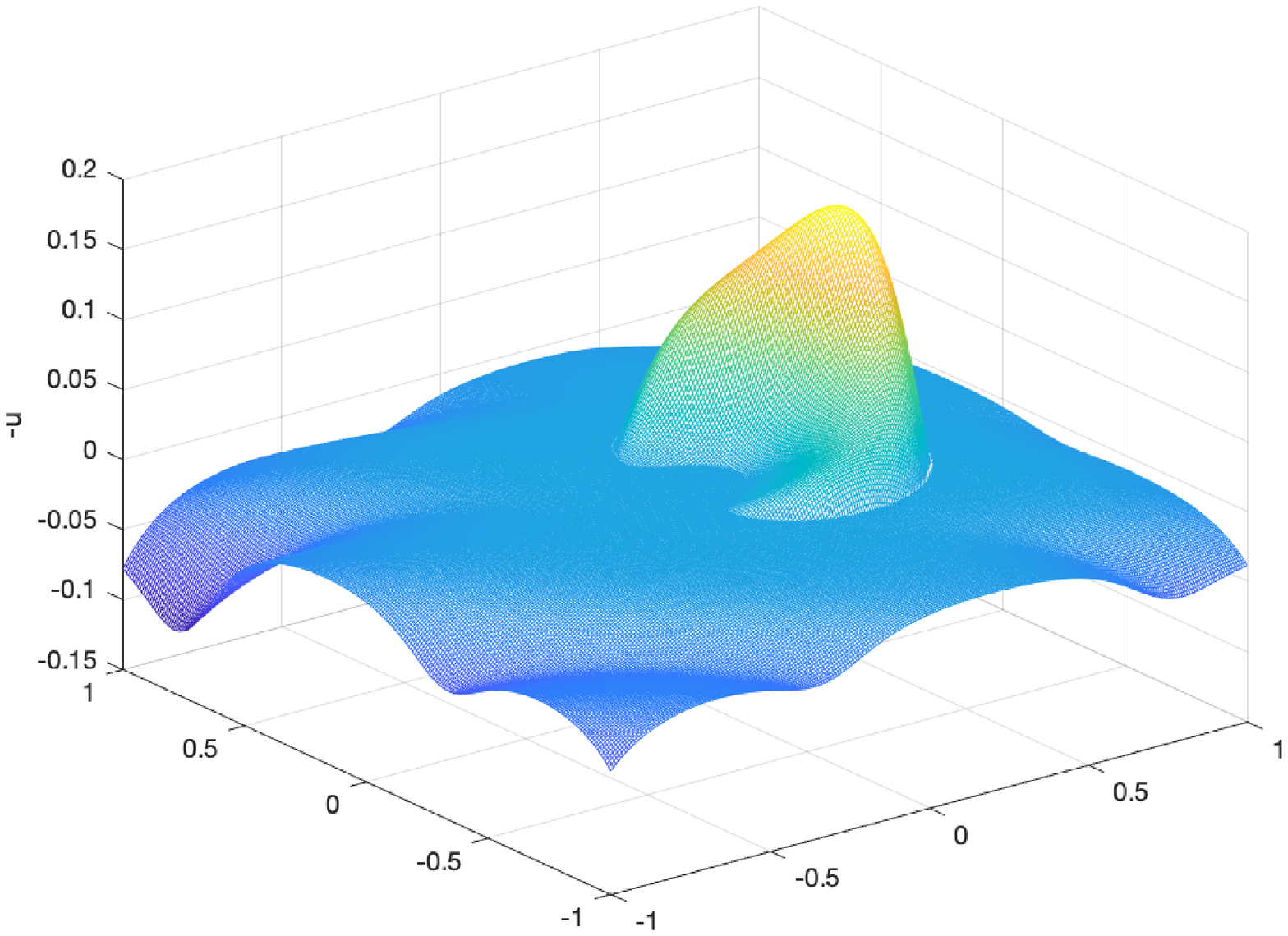}}
\subfigure{%% label for second subfigure
\includegraphics[width=0.45\textwidth]{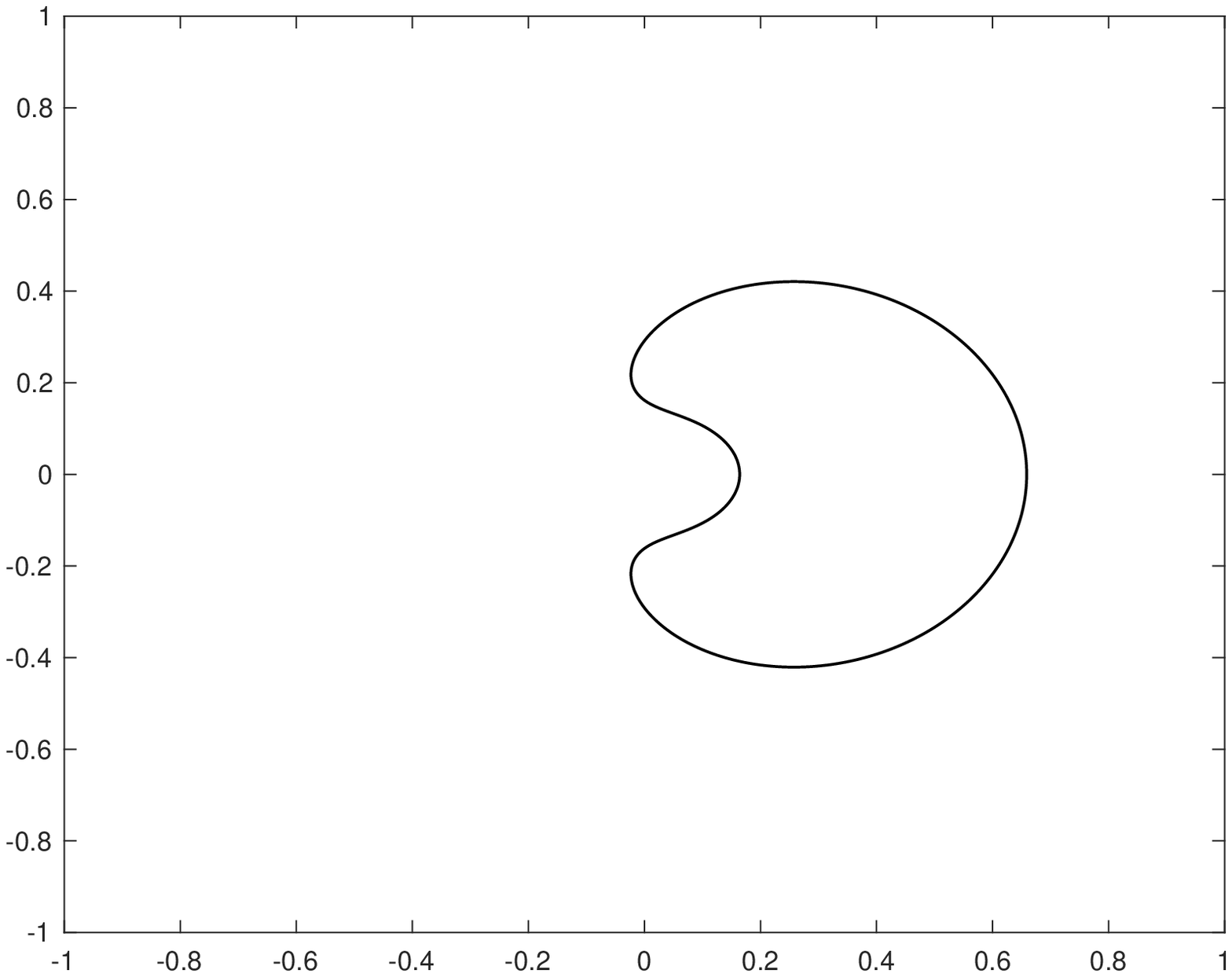}}
 \caption{ $-u$ (left) and the interface $\Gamma$ (right) for {\em Example}~\ref{ex2}. \label{ex2_fig}} %% label for entire figure
\end{figure}

To deal with variable coefficients, we choose $\beta^+_c=\beta^+(x_m)$, $\beta^-_c=\beta^-(x_m)$ on each interface element $T\in\mathcal{T}_h^\Gamma$, where $x_m$ is the midpoint of $\Gamma_h\cap T$. Since $\nabla u\cdot {\rm\mathbf{t}}=0$ on $\Gamma$, our theoretical analysis suggests the optimal convergence rates for both the nonconforming IFEM without penalties and  our new nonconforming IFEM, which are confirmed by the results shown in Figure~\ref{ex2_fig_er}.  We also test the
parameter-free partially penalized immersed method (PPIFEM) using nodal values \cite{2021ji_IFE} and
the nonconforming PPIFEM in \cite{zhangphd} with $\eta_e=10\mathrm{max}(\beta^+(x_\Gamma), \beta^-(x_\Gamma))$, $x_\Gamma=e\cap\Gamma$ for each interface edge $e$ (see Remark~\ref{remark_PPIFEM}). The numerical results in  Figure~\ref{ex2_fig_er} show that the convergence orders of all IFEMs are optimal. 

 \begin{figure} [htbp]
\centering
\includegraphics[width=0.8\textwidth]{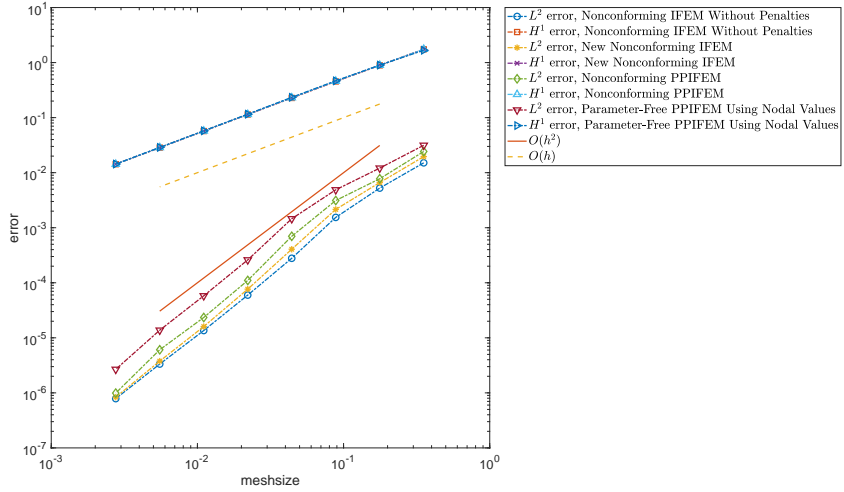}
 \caption{The convergence history of different IFEMs for {\em Example}~\ref{ex2}.\label{ex2_fig_er}} %% label for entire figure
\end{figure}

\subsection{An example with a straight  interface and a piecewise linear solution} \label{ex3}

Inspired by Remark~\ref{remark_suboptimal}, we construct the following example with a straight  interface and a piecewise linear solution.

{\em Example}~\ref{ex3}. 
We choose 
\begin{equation*}
\varphi(x_1,x_2)=\frac{x_1-x_2}{\sqrt{2}},\quad u^\pm(x_1,x_2)=\frac{x_1+x_2}{\sqrt{2}}+\frac{\varphi(x_1,x_2)}{\beta^\pm}.
\end{equation*}
We set $\beta^+=2$ and $\beta^-=1$. Note that the interface cut the boundary of the computational domain. Obviously, $\nabla u\cdot {\rm\mathbf{t}}\not=0$ on $\Gamma$. Numerical results reported in Table~\ref{biao_ex3} clearly show the suboptimal convergence of the nonconforming IFEM without penalties. However, the errors obtained by our new nonconforming IFEM are near machine precision.

\begin{table}[H]
\caption{Numerical results of  {\em Example}~\ref{ex3}.\label{biao_ex3}}
\begin{center}
\begin{tabular}{|c|c c|c c|c |c|}
  \hline
  & \multicolumn{4}{c|}{ Nonconforming IFEM without penalties} & \multicolumn{2}{c|}{New nonconforming IFEM}\\ \hline
          8    &   1.249E$-$02    &              &  1.996E$-$01  &               &   7.971E$-$16 &     3.172E$-$15       \\ \hline   
        16    & 4.561E$-$03      & 1.45      & 1.460E$-$01   &   0.45     &  1.207E$-$15  &     5.528E$-$15      \\ \hline
        32    &  1.638E$-$03     &  1.48      & 1.049E$-$01  &   0.48     &  2.084E$-$15  &    1.072E$-$14      \\ \hline 
\end{tabular}
\end{center}
\end{table}

\subsection{An example with nonhomogeneous jump conditions and $\nabla u\cdot {\rm\mathbf{t}}\not=0$ on $\Gamma$} \label{ex4}
Almost all the works in the literature only consider the examples satisfying the condition $\nabla u\cdot \mathbf{t} = 0$ on $\Gamma$. The reason is that it is not easy to construct a function  satisfying  $\nabla u\cdot \mathbf{t} \not= 0$ and the homogeneous conditions (\ref{p1.2})-(\ref{p1.3}) on a curved interface simultaneously. For the problem with nonhomogeneous jump conditions, the relation $\nabla u\cdot \mathbf{t} = 0$ can be easily violated.  In the final example, we consider this case.

{\em Example}~\ref{ex4}.
We set 
\begin{equation*}
\begin{aligned}
&\varphi(x_1,x_2)=x_1^2+x_2^2-0.5^2,\\
&\beta^+(x_1,x_2)=\sin(x_1+x_2)+2,\quad \beta^-(x_1,x_2)=\cos(x_1+x_2)+2,\\
&u^+(x_1,x_2)=\ln(x_1^2+x_2^2),~\quad\quad\quad u^-(x_1,x_2)=\sin(x_1+x_2).
\end{aligned}
\end{equation*}
Clearly, $g_D:=[u]_{\Gamma}\not=0$ and $g_N:=[\beta\nabla u\cdot \textbf{n}]_{\Gamma}\not=0$. To deal with the nonhomogeneous jump conditions, we need a correction function $u_h^J$ defined as follows. We define $u_h^J|_T=0$ if $T\in\mathcal{T}_h^{non}$. On an interface element $T$, similarly to (\ref{shape2.1}) and (\ref{shape2}), we define $u_h^J|_{T_h^\pm}=\phi^\pm$ with $\phi^\pm\in V_h(T)$ satisfying 
\begin{align}
&\phi^+(x)-\phi^-(x)=g_D(x)\quad \forall x \in \{D,E \},\\
&\beta_c^+ (\nabla \phi^+ \cdot \textbf{n}_h)(x_p)-\beta_c^- (\nabla \phi^- \cdot \textbf{n}_h)(x_p)=\frac{g_N(D)+g_N(E)}{2},\label{jp2_nonhom}\\
&N_i(u_h^J|_T)=0\quad \forall i\in\mathcal{I},
\end{align}
where $D$ and $E$ are endpoints of $\Gamma_h\cap T$. Now, for both methods (\ref{method1}) and  (\ref{method2}), we replace the discrete trial space $V_{h,0}^{\rm IFE}$  by $V_{h,g}^{\rm IFE}+\{u_h^J\}$, where $V_{h,g}^{\rm IFE}$ is the nonconforming IFE space satisfying  the nonhomogeneous Dirichlet boundary condition $g:=u|_{\partial \Omega}$ on boundary edges approximately. The numerical results reported in Table~\ref{ex4_biao} show that the new nonconforming IFEM converges optimally in both $L^2$ and $H^1$ norms; however, the nonconforming IFEM without penalties only has the suboptimal convergence since $\nabla u\cdot \mathbf{t} \not= 0$ on $\Gamma$.

We note that in (\ref{jp2_nonhom}) we use point values $g_N(D)$ and $g_N(E)$, which are not well defined if $g_N\in H^{1/2}(\Gamma)$. In this numerical example, $g_N(D)$ and $g_N(E)$ are meaningful since $g_N\in C(\Gamma)$. For the general case with $g_N\in H^{1/2}(\Gamma)$ and $g_D\in H^{3/2}(\Gamma)$, the point values should be replaced by averages in some sense. The corresponding IFE methods and  theoretical analysis will be presented in a forthcoming paper.

\begin{table}[H]
\caption{Numerical results of  {\em Example}~\ref{ex4}.\label{ex4_biao}}
\begin{center}
\begin{tabular}{|c|c c|c c|c c|c c|}
  \hline
  & \multicolumn{4}{c|}{Nonconforming IFEM without penalties} & \multicolumn{4}{c|}{New nonconforming IFEM}\\ \hline
       $N$  &  $\|e_h\|_{L^2}$   &  rate   &  $|e_h|_{H^1}$  &  rate  &  $\|e_h\|_{L^2}$  &  rate  & $|e_h|_{H^1}$  &  rate    \\ \hline
     8    &   7.341E$-$02   &              &  1.194E$+$00    &              &    6.660E$-$02   &              &  1.052E$+$00   &  \\ \hline
   16    &   2.353E$-$02   &    1.64   &    7.886E$-$01  &     0.60  &     1.659E$-$02  &     2.01  &     5.385E$-$01 &      0.97 \\ \hline
   32   &    8.092E$-$03   &    1.54   &    5.171E$-$01   &    0.61  &   4.293E$-$03   &    1.95    &   2.738E$-$01   &    0.98 \\ \hline
   64   &    3.282E$-$03    &   1.30    &   4.082E$-$01   &    0.34  &  1.038E$-$03    &   2.05    &   1.381E$-$01    &   0.99  \\ \hline
   128   &    1.158E$-$03    &   1.50   &    2.691E$-$01   &    0.60  & 2.699E$-$04    &   1.94    &   6.945E$-$02    &   0.99    \\ \hline
   256   &    4.423E$-$04    &   1.39   &    1.906E$-$01   &    0.50 &  6.714E$-$05    &   2.01    &   3.483E$-$02    &   1.00   \\ \hline
   512   &    1.780E$-$04    &   1.31   &    1.365E$-$01   &    0.48 &  1.685E$-$05    &   1.99    &   1.744E$-$02    &   1.00   \\ \hline
  1024  &     7.517E$-$05    &   1.24  &     9.795E$-$02   &    0.48 &    4.198E$-$06  &     2.00   &    8.727E$-$03  &     1.00   \\ \hline
\end{tabular}
\end{center}
\end{table}

\section{Conclusions}
In this paper, we have shown  that the nonconforming IFE methods using the integral-value degrees of freedom on edges are not guaranteed to  achieve optimal convergence rates without adding penalties although the continuity of IFE shape functions is weakly enforced through average values over edges.  The suboptimal convergence rates have been confirmed by a counter numerical example where the the tangential derivative of the exact solution is not zero on the interface.  We think there is a similar issue for nonconforming IFE methods using integral-values on edges as degrees of freedom for solving elasticity and Stokes interface problems, which is an interesting topic in our future research.

To recover the optimal convergence rates, we have developed a new nonconforming IFE method with additional terms at interface edges. The new nonconforming IFE method is symmetric and the coercivity is ensured by a local lifting operator without a sufficiently large penalty parameter. We have also proved that IFE basis functions based on the Crouzeix-Raviart elements are unisolvent on arbitrary triangles which is one of advantages compared with the IFEs using nodal values as degrees of freedom. The optimal approximation capabilities of nonconforming IFE spaces based on the Crouzeix-Raviart and the rotated-$Q_1$  elements have been derived via a novel approach which can handle the case of variable coefficients easily.  The optimal error estimates for the IFE solutions in the $H^1$- and $L^2$- norms have been derived and confirmed by some numerical examples. 

\textbf{Acknowledgments}:
The work of the first author was supported by  the National Natural Science Foundation of China grants 11701291, 12101327, and the Natural Science Foundation of Jiangsu Province grant BK20200848. The work of the second and third authors was supported by the National Natural Science Foundation of China grants 12071227, 11871281, 11731007, and the Natural Science Foundation of the Jiangsu Higher Education Institutions of China grant 20KJA110001. The work of the fourth author was supported by a Simons grant  633724.

%The work of the authors was supported by the National Natural Science Foundation of China grants 11701291, 12101327, 12071227, 11871281, 11731007, the Natural Science Foundation of the Jiangsu Higher Education Institutions of China grant 20KJA110001, and a Simons grant  633724 (Z. Li).

\appendix
\section{Proof of optimal approximation capabilities of IFE spaces}
Following the notations in the paper, 
%We need some preparations.
on an interface element $T\in\mathcal{T}_h^\Gamma$, we define IFE basis functions  as
\begin{equation}\label{basis}
\phi_i\in S_h(T),\quad  N_j(\phi_i)=\delta_{ij}~~~~ \forall i,j\in\mathcal{I}.
\end{equation}
Also define functions $\phi_i^s\in V_h(T)$, $s=+,-$, $i\in\mathcal{I}$ such that $\phi_i^s=\phi_i|_{T_h^s}$. 
Let $\lambda_i$ be traditional  basis functions, i.e.,
\begin{equation}\label{basis_standard}
\lambda_i\in V_h(T),\quad  N_j(\lambda_i)=\delta_{ij}~~~~\forall  i,j\in\mathcal{I}.
\end{equation}
 Note that these functions  depend on  elements. We  omit this dependence in our notation for simplicity. 
It is well-known that the traditional  basis functions satisfy 
\begin{equation}\label{basis_stad_est}
|\lambda_i|_{W^m_\infty(T)}\leq Ch_T^{-m},\quad i\in \mathcal{I}, ~m=0,1,2,
\end{equation}
where the constant $C$ only depends on the shape regularity parameter $\varrho$.
The following lemma shows that the IFE basis functions also have similar estimates, which is one of essential ingredients for the success of IFE methods.
\begin{lemma} 
There exists a constant $C$, depending only on $\beta_c^+$, $\beta_c^-$ and the shape regularity parameter $\varrho$, such that
\begin{align}
&|\phi_i^+|_{W^m_\infty(T)}\leq Ch_T^{-m},~~|\phi_i^-|_{W^m_\infty(T)}\leq Ch_T^{-m}, \quad i\in \mathcal{I}, ~m=0,1,2,\label{bounds_basis1}\\
&|\phi_i|_{W^m_\infty(T_h^+\cup T_h^-)}\leq Ch_T^{-m},\quad i\in \mathcal{I}, ~m=0,1,2.\label{bounds_basis2}
\end{align}
\end{lemma}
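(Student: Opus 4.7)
The plan is to extract explicit expressions for the IFE basis functions from the constructive proof of Lemma~\ref{lem_uniq}, and to bound all constants involved uniformly in $h_T$ and in the interface location. I begin with the Crouzeix-Raviart case. For a given basis function $\phi_i$, the coefficients $c_1,c_2$ entering the representation (\ref{pro_phi}) solve the linear system (\ref{unique_shape_eq}) with right-hand side $\boldsymbol{b}$ defined in (\ref{shap_eq}) using $N_j(\phi_i)=\delta_{ij}$. Using the standard estimates (\ref{basis_standard})--(\ref{basis_stad_est}) for $\lambda_j$ and the elementary bounds $|\nabla\lambda_j\cdot\mathbf{n}_h|\leq Ch_T^{-1}$, $|e_j|^{-1}|\int_{\overline{A_3D}}L(x)\,ds|\leq C h_T$ and analogously for $\overline{A_3E}$, I would show that $|\boldsymbol{b}|\leq C$ with $C$ depending only on $\beta_c^\pm$ and the shape-regularity parameter $\varrho$.

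Next, I invoke the Sherman-Morrison representation (\ref{soltu}) together with the crucial uniform lower bound (\ref{est_max}), $1+(\beta_c^+/\beta_c^- -1)\boldsymbol{\gamma}^T\boldsymbol{\delta}\geq \beta_{min}/\beta_{max}>0$, which is independent of where the interface cuts $T$. Since $|\boldsymbol{\gamma}|\leq Ch_T^{-1}$, $|\boldsymbol{\delta}|\leq Ch_T$, the combined factor $(\beta_c^+/\beta_c^- -1)(\boldsymbol{\gamma}^T\boldsymbol{b})\boldsymbol{\delta}$ stays bounded, so $|\boldsymbol{c}|\leq C$. Substituting this back into (\ref{exc_0}) yields $|c_0|\leq Ch_T^{-1}$ (the $h_T^{-1}$ coming from $\nabla\lambda_j$). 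These bounds are the key uniform estimates that do not degenerate as $|T_h^+|$ or $|T_h^-|$ becomes arbitrarily small.

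With uniform bounds on $c_0,c_1,c_2$ in hand, (\ref{bounds_basis1}) for $\phi_i^+$ is immediate from its expression $\phi_i^+=c_1\lambda_1+c_2\lambda_2+N_3(\phi_i)\lambda_3$ and the standard estimate (\ref{basis_stad_est}). For $\phi_i^-=\phi_i^+ + c_0\,\mathbf{n}_h\cdot\overrightarrow{Dx}$, I observe that $\mathbf{n}_h\cdot\overrightarrow{Dx}$ is a linear function with $\|\mathbf{n}_h\cdot\overrightarrow{Dx}\|_{L^\infty(T)}\leq Ch_T$, $|\mathbf{n}_h\cdot\overrightarrow{Dx}|_{W^1_\infty(T)}\leq C$ and vanishing second derivatives, so the $c_0$ contribution produces the respective bounds $Ch_T^{-1}\cdot h_T=C$, $Ch_T^{-1}$, and $0$ for $m=0,1,2$. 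Combining gives (\ref{bounds_basis1}) for the Crouzeix-Raviart case. The rotated-$Q_1$ case is handled by the same scheme, using the equivalent interface condition set out in Remark~\ref{remark_cc} to cast $\phi_i$ as $\phi_i^++c_0\,\mathbf{n}_h\cdot\overrightarrow{Dx}$ on $T_h^-$ (with $\phi_i^\pm\in V_h(T)$ spanned by $1,x_1,x_2,x_1^2-(\kappa_T x_2)^2$) and the same Sherman-Morrison style argument after augmenting the linear system with the extra functional $d(\cdot)$ from (\ref{def_d}); the shape-regularity of the rectangle keeps $\kappa_T$ bounded. Finally, (\ref{bounds_basis2}) is an immediate consequence of (\ref{bounds_basis1}) together with $T_h^s\subset T$, since $\phi_i|_{T_h^s}=\phi_i^s$.

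The main obstacle, which drives the whole proof, is ensuring the estimates are insensitive to the relative position of $\Gamma$ inside $T$. That difficulty is concentrated in the denominator of the Sherman-Morrison formula, and it is precisely resolved by the identity $\boldsymbol{\gamma}^T\boldsymbol{\delta}=k_1k_2\in[0,1]$ already established in the proof of Lemma~\ref{lem_uniq}. Once this cornerstone is in place, every other step is a routine combination of (\ref{basis_stad_est}) with $|\overrightarrow{Dx}|\leq Ch_T$ on $T$.
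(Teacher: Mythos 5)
Your argument is correct and follows essentially the same route as the paper's proof: both extract the coefficients from the constructive solution of Lemma~\ref{lem_uniq}, bound $\|\boldsymbol{b}\|\leq C$, $\|\boldsymbol{\gamma}\|\leq Ch_T^{-1}$, $\|\boldsymbol{\delta}\|\leq Ch_T$, and use the uniform lower bound (\ref{est_max}) in the Sherman--Morrison formula (\ref{soltu}) to get $\|\boldsymbol{c}\|\leq C$ and then $|c_0|\leq Ch_T^{-1}$, independently of the interface location. The only cosmetic difference is that the paper disposes of the rotated-$Q_1$ case by citing Theorem~4.2 of \cite{Guo2018CMA} rather than sketching the augmented linear system as you do.
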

\begin{proof}
See Theorem 4.2 in \cite{Guo2018CMA} for the estimate (\ref{bounds_basis2}) for the rotated-$Q_1$ element and the Crouzeix-Raviart element on right triangles.  The estimate (\ref{bounds_basis1}) can also be obtained easily from the proof of Theorem 5.6 in \cite{GuoIMA2019}. Next, we give a proof for general triangles without the constraint $\alpha_{max}=\pi/2$. We just need to  prove (\ref{bounds_basis1}) since (\ref{bounds_basis2})  is a direct consequence of (\ref{bounds_basis1}).  Using $|N_j(\phi_i)|\leq 1$, $\|\nabla \lambda_i\|_{L^\infty(T)}\leq Ch_T^{-1}$, $i,j\in\mathcal{I}$,  we can estimate vectors in (\ref{shap_eq}) as
\begin{equation*}
\|\boldsymbol{b}\|\leq C,~ \|\boldsymbol{\gamma}\|\leq Ch_T^{-1},~\|\boldsymbol{\delta}\|\leq Ch_T,
\end{equation*}
which, together with (\ref{est_max}) and (\ref{soltu}) lead to $\|\boldsymbol{c}\|\leq C$, where the constant $C$ is independent of $h_T$ and the interface location relative to the mesh. Thus, from (\ref{pro_phi}), it follows 
\begin{equation*}
|\phi_i^+|_{W^m_\infty(T)}\leq Ch_T^{-m},\quad i\in \mathcal{I}, ~m=0,1,
\end{equation*}
where we have used the fact $ |\lambda_i|_{W^m_\infty(T)}\leq Ch_T^{-m}$. It follows from (\ref{exc_0}) that $|c_0|\leq Ch_T^{-1}$ which together with  (\ref{pro_phi}) yields
\begin{equation*}
|\phi_i^-|_{W^m_\infty(T)}\leq Ch_T^{-m},\quad i\in \mathcal{I}, ~m=0,1.
\end{equation*}
\end{proof}

Given two functions $v^+\in L^2(T)$ and $v^-\in L^2(T)$, we define
\begin{equation*}
[\![v^\pm]\!](x):=v^+(x)-v^-(x)\qquad \forall x\in T.
\end{equation*}
%For example, for a function $v\in \widetilde{H}^2(\Omega)$, 
%\begin{equation}\label{shuangfang1}
%\begin{aligned}
%&[\![ v_E^\pm]\!](x)=v_E^+(x)-v_E^-(x)\qquad&\forall x\in T,\\
%&[\![\beta^\pm \nabla v_E^\pm \cdot \textbf{n}]\!](x)=\beta^+(x) \nabla v_E^+(x)\cdot\textbf{n}(x)-\beta^-(x) \nabla v_E^-(x)\cdot\textbf{n}(x)\qquad&\forall x\in T,\\
%&[\![ I_h v_E^\pm]\!](x)=I_hv_E^+(x)-I_hv_E^-(x)\qquad&\forall x\in T,\\
%&[\![\beta^\pm \nabla (I_h v_E^\pm)\cdot \textbf{n}_h]\!](x)=\beta^+(x) \nabla (I_hv_E^+)\cdot\textbf{n}_h(x)-\beta^-(x) \nabla( I_hv_E^-)\cdot\textbf{n}_h(x)\qquad&\forall x\in T,
%\end{aligned}
%\end{equation}
%and for a function $v_h\in V_h^{\rm IFE}$, 
%\begin{equation}\label{shuangfang2}
%\begin{aligned}
%&[\![v_h^\pm]\!](x)=v_h^+(x)-v_h^-(x)\qquad&\forall x\in T,\\
%&[\![\beta_c^\pm \nabla v_h^\pm\cdot \textbf{n}_h]\!](x)=\beta_c^+ \nabla v^+_h(x)\cdot\textbf{n}_h(x)-\beta_c^- \nabla v^-_h(x)\cdot\textbf{n}_h(x)\qquad&\forall x\in T.
%\end{aligned}
%\end{equation}
%Note that the difference between $[\![\cdot]\!](x)$ and $[\cdot]_\Gamma(x)$ is the range of $x$.
%
We next introduce {\em auxiliary functions}  on each interface element $T\in\mathcal{T}_h^\Gamma$.
Recalling that $D$ and $E$ are intersection points of $\Gamma$ and $\partial T$, define auxiliary functions $\Upsilon_1(x)$, $\Upsilon_2(x)$ and $\Psi(x)$ as
\begin{equation}\label{def_Upsilon1}
\Upsilon_i(x):=\left\{
\begin{aligned}
\Upsilon^+_i(x)\in V_h(T)\quad \mbox{ if } x=(x_1,x_2)\in T_h^+,\\
\Upsilon^-_i(x)\in V_h(T)\quad \mbox{ if } x=(x_1,x_2)\in T_h^-,
\end{aligned}\qquad i=1,2,
\right.
\end{equation}
such that
\begin{equation}\label{def_Upsilon2}
\begin{aligned}
&N_j(\Upsilon_i)=0~\forall j\in\mathcal{I}, \quad [\![\Upsilon_i^\pm]\!](D)=0,\quad i=1,2,\\
& [\![\beta_c^\pm \nabla\Upsilon_1^\pm \cdot \textbf{n}_h]\!](x_p)=1,\quad [\![\nabla\Upsilon_1^\pm \cdot \textbf{t}_h]\!](x_p)=0, \quad [\![d(\Upsilon_1^\pm)]\!]=0,\\
& [\![\beta_c^\pm \nabla\Upsilon_2^\pm \cdot \textbf{n}_h]\!](x_p)=0,\quad [\![\nabla\Upsilon_2^\pm \cdot \textbf{t}_h]\!](x_p)=1,\quad [\![d(\Upsilon_2^\pm)]\!]=0,
\end{aligned}
\end{equation}
and
\begin{equation}\label{def_Psi1}
\Psi(x):=\left\{
\begin{aligned}
\Psi^+(x)\in V_h(T)\quad  \mbox{ if } x=(x_1,x_2)\in T_h^+,\\
\Psi^-(x)\in V_h(T)\quad  \mbox{ if } x=(x_1,x_2)\in T_h^-,
\end{aligned}
\right.
\end{equation}
such that
\begin{equation}\label{def_Psi2}
\begin{aligned}
&N_j (\Psi)=0~\forall j\in\mathcal{I}, \quad [\![\Psi^\pm]\!](D)=1, \\
&[\![\beta_c^\pm \nabla\Psi^\pm \cdot \textbf{n}_h]\!](x_p)=0,\quad [\![\nabla\Psi^\pm \cdot \textbf{t}_h]\!](x_p)=0,\quad [\![d(\Psi^\pm)]\!]=0,\\
\end{aligned}
\end{equation}
where $d(\cdot)$ is defined in (\ref{def_d}) and  the point $x_p\in\Gamma_h\cap T$ is the same as that in ($\ref{shape2}$). 
For the rotated-$Q_1$ element, we need another  auxiliary function 
\begin{equation}\label{def_Theta1}
\Theta(x):=\left\{
\begin{aligned}
\Theta^+(x)\in V_h(T)\quad  \mbox{ if } x=(x_1,x_2)\in T_h^+,\\
\Theta^-(x)\in V_h(T)\quad  \mbox{ if } x=(x_1,x_2)\in T_h^-,
\end{aligned}
\right.
\end{equation}
such that
\begin{equation}\label{def_Theta2}
\begin{aligned}
&N_j (\Theta)=0~\forall j\in\mathcal{I}, \quad [\![\Theta^\pm]\!](D)=0, \\
&[\![\beta_c^\pm \nabla\Theta^\pm \cdot \textbf{n}_h]\!](x_p)=0,\quad [\![\nabla\Theta^\pm \cdot \textbf{t}_h]\!](x_p)=0,\quad [\![d(\Theta^\pm)]\!]=1.
\end{aligned}
\end{equation}
In order to have  a unified analysis, we also define $\Theta=0$  for the Crouzeix-Raviart element. Note that these auxiliary functions depend on the element $T$. We omit the dependence for simplicity of  notations.

\begin{lemma}\label{lem_jiest}
On each interface element $T\in\mathcal{T}^\Gamma_h$, these functions $\Upsilon_1(x)$, $\Upsilon_2(x)$, $\Psi(x)$ and $\Theta$ defined in (\ref{def_Upsilon1})-(\ref{def_Theta2})  exist and satisfy
\begin{equation}\label{est_uppsi}
\begin{aligned}
&|\Upsilon_i^s|^2_{H^m(T)}\leq Ch_T^{4-2m},~\quad m=0,1,2,~i=1,2,~s=+,-,\\
&|\Psi^s|^2_{H^m(T)}\leq Ch_T^{2-2m},~|\Theta^s|^2_{H^m(T)}\leq Ch_T^{6-2m},\quad m=0,1,2,~s=+,-,
\end{aligned}
\end{equation}
where the constant $C$  depends  only on $\beta_c^+$, $\beta_c^-$ and the shape regularity parameter $\varrho$.
\end{lemma}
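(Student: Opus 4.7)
The plan is to handle the lemma in two stages: first establish existence and uniqueness by reducing the defining system to the IFE unisolvence result of Lemma~\ref{lem_uniq}, then obtain the stated $h_T$-scaled bounds via an affine scaling to a unit-diameter reference element, where uniform estimates are furnished by the same Sherman--Morrison inverse used in the proof of Lemma~\ref{lem_uniq}. Existence and uniqueness proceed by dimension counting in the $2\dim V_h(T)$-dimensional space $\widetilde{V}(T):=\{v : v|_{T_h^s}\in V_h(T),~s=\pm\}$: the defining data for each of $\Upsilon_1,\Upsilon_2,\Psi,\Theta$ amount to exactly $2\dim V_h(T)$ linear functionals on $\widetilde{V}(T)$ (three $N_k$'s plus three jump conditions in the Crouzeix--Raviart case; four and four including the extra $[\![d(\cdot)]\!]$ condition in the rotated-$Q_1$ case as in Remark~\ref{remark_cc}). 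To see the homogeneous system admits only the trivial solution, observe that when $[\![d(\phi^\pm)]\!]=0$ the piecewise ansatz of Remark~\ref{remark_cc} shows $[\![\nabla\phi^\pm]\!]$ is a constant vector on $T$, so $[\![\phi^\pm]\!](E)=[\![\phi^\pm]\!](D)+|DE|\,[\![\nabla\phi^\pm\cdot\mathbf{t}_h]\!](x_p)=0$; thus $\phi\in S_h(T)$ by Remark~\ref{remark_cc}, and Lemma~\ref{lem_uniq} (with its rotated-$Q_1$ analog from \cite{Guo2018CMA}) forces $\phi=0$.

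For the bounds, the plan is to apply an affine scaling $F:\hat{T}\to T$ onto a reference element of unit diameter with straightened interface $F^{-1}(\Gamma_h\cap T)$. Under $F$ the degrees of freedom $N_k$ are dimensionless, while first- and second-derivative jumps scale by $h_T^{-1}$ and $h_T^{-2}$ respectively. Matching the prescribed unit jumps in \eqref{def_Upsilon2}--\eqref{def_Theta2} identifies the pull-backs as $\hat{\Upsilon}_i=h_T\tilde{\Upsilon}_i$, $\hat{\Psi}=\tilde{\Psi}$, and $\hat{\Theta}=h_T^2\tilde{\Theta}$, where $\tilde{\Upsilon}_i,\tilde{\Psi},\tilde{\Theta}$ denote the reference-element auxiliary functions with exactly unit jump data; the change-of-variables relation $|v|_{H^m(T)}^2=h_T^{2-2m}|\hat{v}|_{H^m(\hat{T})}^2$ then produces precisely the factors $h_T^{4-2m}$, $h_T^{2-2m}$, and $h_T^{6-2m}$ claimed in~\eqref{est_uppsi}.

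The hard part will be establishing that $|\tilde{\Upsilon}_i^s|_{H^m(\hat{T})}$, $|\tilde{\Psi}^s|_{H^m(\hat{T})}$, and $|\tilde{\Theta}^s|_{H^m(\hat{T})}$ are bounded by constants depending only on $\beta_c^\pm$ and the shape-regularity parameter $\varrho$, uniformly in the interface position inside $\hat{T}$, which may degenerate toward a vertex. The strategy mirrors the proof of Lemma~\ref{lem_uniq}: expanding each auxiliary function in the traditional basis $\{\hat{\lambda}_i\}$ on each side of the straightened interface and imposing the prescribed conditions yields a block linear system of the Sherman--Morrison form $\mathbf{I}+(\beta_c^+/\beta_c^- - 1)\boldsymbol{\delta}\boldsymbol{\gamma}^T$, whose scalar correction $1+(\beta_c^+/\beta_c^- - 1)\boldsymbol{\gamma}^T\boldsymbol{\delta}$ is bounded below by $\beta_{\min}/\beta_{\max}>0$ as in~\eqref{est_max}; the vectors $\boldsymbol{\gamma},\boldsymbol{\delta}$ and the right-hand side data are then controlled using $|\hat{\lambda}_i|_{W^m_\infty(\hat{T})}\le C$ together with the shape regularity of $\hat{T}$, giving the sought uniform reference bounds. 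The rotated-$Q_1$ case is treated by the analogous formula with one extra row/column accounting for the $\hat{d}$-functional.
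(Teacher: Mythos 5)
Your proposal is correct but takes a genuinely different route from the paper. The paper proves existence by explicit construction: each auxiliary function is written as $z-I_{h,T}^{\rm IFE}z$ for a one-sided function $z$ vanishing on $T_h^-$ (the indicator of $T_h^+$ for $\Psi$, a linear function with prescribed normal and tangential data for $\Upsilon_i$, and $(x_1-m_1)^2-\kappa_T^2(x_2-m_2)^2$ for $\Theta$); since $I_{h,T}^{\rm IFE}z\in S_h(T)$ contributes nothing to any jump functional, the data are inherited from $z$, and the bounds follow on the physical element from $\|z\|_{L^\infty(T)}\leq Ch_T^k$, $|N_j(z)|\leq\|z\|_{L^\infty(T)}$, the already-proved basis-function bounds (\ref{bounds_basis1}), and $|\cdot|^2_{H^m(T)}\leq|\cdot|^2_{W^m_\infty(T)}|T|$ --- no reference element is ever introduced. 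Your dimension-count-plus-unisolvence argument is sound (your reduction of the homogeneous system to $S_h(T)$ via the affineness of $[\![\phi^\pm]\!]$ when $[\![d(\phi^\pm)]\!]=0$ is exactly the mechanism the paper deploys later inside the proof of Lemma~\ref{lema_fenjie}), and your scaling identification of the exponents $4-2m$, $2-2m$, $6-2m$ from the order of the prescribed jump is correct. The trade-off: your route additionally yields uniqueness (not claimed in the lemma) and cleanly separates the $h_T$-scaling from the interface-position dependence, but it forces you to re-derive, for each auxiliary function, a fresh Sherman--Morrison system with inhomogeneous jump data and to verify its right-hand side stays bounded uniformly as the interface degenerates toward a vertex, whereas the paper's particular-solution trick lets all four functions piggyback on the single uniform bound (\ref{est_max}) already encoded in (\ref{bounds_basis1}). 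If you execute your plan, do check that the affine map keeps the transformed $\textbf{n}_h$ and $\textbf{t}_h$ uniformly transversal (shape regularity gives this), and note that for the Crouzeix--Raviart element $[\![d(\cdot)]\!]$ vanishes identically, so your count of three jump conditions there is the right one.
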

\begin{proof}
We  construct $\Upsilon_i(x)$, $i=1,2$ as follows,
\begin{equation}\label{pro_Upsilon1}
\Upsilon_i=z_i-I_{h,T}^{\rm IFE}z_i, \quad z_i=\left\{
\begin{aligned}
&z_i^+\quad &&\mbox{ in } T_h^+,\\
&0\quad &&\mbox{ in } T_h^-,
\end{aligned}\right.,\quad i=1,2,
\end{equation}
where $z_1^+$ and $z_2^+$ are linear and satisfy 
\begin{equation}\label{pro_3.51}
\begin{aligned}
&z^+_1(D)=0, ~~\beta^+_c\nabla z^+_1 \cdot\textbf{n}_h=1, ~~\nabla z^+_1 \cdot\textbf{t}_h=0,\\
&z^+_2(D)=0, ~~\beta^+_c\nabla z^+_2 \cdot\textbf{n}_h=0, ~~\nabla z^+_2 \cdot\textbf{t}_h=1.
\end{aligned}
\end{equation}
It is easy to verify that $z^+_1$ and $z^+_2$ exist, and the constructed functions $\Upsilon_i$, $i=1,2$ satisfy (\ref{def_Upsilon1}) and (\ref{def_Upsilon2}).
From (\ref{pro_3.51}), we have
\begin{equation}\label{pro_Upsilon2}
\|z^+_i\|_{L^\infty(T)}\leq Ch_T,~ |\nabla z^+_i|\leq C, ~|z_i^+|_{W_\infty^2(T)}=0, ~ \|z_i\|_{L^\infty(T)}\leq Ch_T,\quad i=1,2.
\end{equation}
Since $I_{h,T}^{\rm IFE}z_i=\sum_{j}N_j(z_i)\phi_j$,  it follows from  (\ref{bounds_basis1}) that, for $m=0,1,2,~i=1,2,~s=+,-$,
\begin{equation*}
\left|\left(I_{h,T}^{\rm IFE}z_i\right)^s\right|_{W_\infty^m(T)}\leq \sum_{j\in\mathcal{I}}|N_j(z_i)||\phi^s_j|_{W_\infty^m(T)}\leq Ch_T^{-m}\sum_{j\in\mathcal{I}}|N_j(z_i)|.
\end{equation*}
Noticing 
$$|N_j(z_i)|\leq |e_j|^{-1}\int_{e_j}|z_i|ds\leq \|z_i\|_{L^\infty(T)}\leq Ch_T,$$
we have 
\begin{equation*}
\left|\left(I_{h,T}^{\rm IFE}z_i\right)^s\right|_{W_\infty^m(T)}\leq Ch_T^{1-m},\quad m=0,1,2,~i=1,2,~s=+,-,
\end{equation*}
which together with (\ref{pro_Upsilon1}) and (\ref{pro_Upsilon2}) implies 
\begin{equation*}
|\Upsilon_i^s|_{W_\infty^m(T)}\leq Ch_T^{1-m},~m=0,1,2,~i=1,2,~s=+,-.
\end{equation*}
Finally, the first estimate in (\ref{est_uppsi}) is obtained by
\begin{equation*}
|\Upsilon_i^s|^2_{H^m(T)}\leq |\Upsilon_i^s|^2_{W_\infty^m(T)} |T| \leq  Ch_T^{4-2m},\quad m=0,1,2,~i=1,2,~s=+,-.
\end{equation*}

Other  estimates in (\ref{est_uppsi}) can be proved similarly. We construct $\Psi(x)$ as
\begin{equation*}
\Psi=z-I_{h,T}^{\rm IFE}z,  \quad z=\left\{
\begin{aligned}
z^+=1\quad \mbox{ in } T_h^+,\\
0\quad \mbox{ in } T_h^-.
\end{aligned}\right.
\end{equation*}
It is easy to verify that the constructed function $\Psi$ satisfies (\ref{def_Psi1}) and (\ref{def_Psi2}).
Since
\begin{equation*}
\begin{aligned}
&\|z^+\|_{L^\infty(T)}=1,\quad |z^+|_{W_\infty^m(T)}=0,\quad \|z\|_{L^\infty(T)}=1, \quad m=1,2,\\
&|N_j(z)|\leq |e_j|^{-1}\int_{e_j}|z|ds\leq \|z\|_{L^\infty(T)}\leq 1,\quad j\in\mathcal{I},
\end{aligned}
\end{equation*}
we get
\begin{equation*}
\left|\left(I_{h,T}^{\rm IFE}z\right)^s\right|_{W_\infty^m(T)}\leq \sum_{j\in\mathcal{I}}|N_j(z)||\phi^s_j|_{W_\infty^m(T)}\leq Ch_T^{-m}\sum_{j\in\mathcal{I}}|N_j(z)|\leq Ch_T^{-m}
\end{equation*}
and
\begin{equation*}
|\Psi^s|_{W_\infty^m(T)}\leq Ch_T^{-m}, \quad s=+,-,~m=0,1,2,
\end{equation*}
which implies
\begin{equation*}
|\Psi^s|^2_{H^m(T)}\leq |\Psi^s|^2_{W_\infty^m(T)}|T| \leq Ch_T^{2-2m}, \quad s=+,-,~m=0,1,2.
\end{equation*}

For  the rotated-$Q_1$ element, we construct $\Theta(x)$ as
\begin{equation*}
\Theta=z-I_{h,T}^{\rm IFE}z,  \quad z=\left\{
\begin{aligned}
z^+=(x_1-m_1)^2-\kappa^2_T (x_2-m_2)^2\quad \mbox{ in } T_h^+,\\
0\quad \mbox{ in } T_h^-,
\end{aligned}\right.
\end{equation*}
where $(m_1,m_2)$ is the center of the rectangle $T$. 
Using (\ref{def_d}), we easily verify that the constructed function $\Theta$ satisfies (\ref{def_Theta1}) and (\ref{def_Theta2}). It follows that 
\begin{equation}
\begin{aligned}
&\|z^+\|_{L^\infty(T)}\leq Ch_T^2,~ |z^+|_{W^1_\infty(T)}\leq Ch_T, ~|z^+|_{W_\infty^2(T)}\leq C,~ \|z\|_{L^\infty(T)}\leq Ch_T^2,\\
&|N_j(z)|\leq |e_j|^{-1}\int_{e_j}|z|ds\leq \|z\|_{L^\infty(T)}\leq Ch_T^2,\quad j\in\mathcal{I},
\end{aligned}
\end{equation}
which implies
\begin{equation*}
\left|\left(I_{h,T}^{\rm IFE}z\right)^s\right|_{W_\infty^m(T)}\leq \sum_{j\in\mathcal{I}}|N_j(z)||\phi^s_j|_{W_\infty^m(T)}\leq Ch_T^{-m}\sum_{j\in\mathcal{I}}|N_j(z)|\leq Ch_T^{2-m}
\end{equation*}
and
\begin{equation}\label{new_theta}
\begin{aligned}
|\Theta^s|^2_{H^m(T)}&\leq |\Theta^s|^2_{W_\infty^m(T)}|T|\leq  |\Theta^s|^2_{W_\infty^m(T)}|T|\\
&\leq C\left( \left|\left(z\right)^s\right|^2_{W_\infty^m(T)}+  \left|\left(I_{h,T}^{\rm IFE}z\right)^s\right|^2_{W_\infty^m(T)}\right)|T|\\
&\leq Ch_T^{6-2m}, \quad s=+,-,~m=0,1,2.
 \end{aligned}
\end{equation}
For the Crouzeix-Raviart element, the above inequality (\ref{new_theta}) is also valid since we have defined $\Theta=0$ if $T$ is a triangle.
\end{proof}

\begin{lemma}\label{lema_fenjie}
On each interface element $T\in \mathcal{T}_h^\Gamma$,  for any $v\in \widetilde{H}^2(\Omega)$, the following identity holds:
\begin{equation}\label{fenjie1}
(I_{h} v_E^s)(x)-(I_{h}^{\rm IFE}v)^s(x)=a\Psi^s(x)+\sum_{i=1,2}b_i\Upsilon_i^s(x)+\sum_{i\in\mathcal{I}} g_i \phi^s_i(x)+t\Theta^s(x)~ \forall x\in T,~ s=+,-,
\end{equation}
with 
\begin{equation}\label{fenjie2}
\begin{aligned}
&a=[\![ I_{h} v_E^\pm ]\!](D),~b_1=[\![\beta_c^\pm \nabla (I_{h} v_E^\pm )\cdot \textbf{n}_h]\!](x_p),~b_2=[\![ \nabla (I_{h} v_E^\pm )\cdot \textbf{t}_h]\!](x_p),\\
&g_i=|e_i|^{-1}\left(\int_{e^+_i}(I_h v_E^+-v_E^+)ds+\int_{e^-_i}(I_h v_E^--v_E^-)ds\right),~i\in\mathcal{I},\\
&t=[\![d(I_hv_E^\pm)]\!],
\end{aligned}
\end{equation}
where  $e_i^s=e_i\cap\Omega^s$, $s=+,-$,  and $d(\cdot)$ is defined in (\ref{def_d}). It is easy to see that $g_i=0$ when $e_i^+=\emptyset$ or $e_i^+=e_i$.
\end{lemma}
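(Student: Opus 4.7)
The plan is to prove the identity (\ref{fenjie1}) by matching a complete set of linear functionals in a duality argument. Setting $W^\pm := I_h v_E^\pm - (I_h^{\rm IFE}v)^\pm$ and letting $L^\pm$ denote the right-hand side of (\ref{fenjie1}), both pairs $(W^+,W^-)$ and $(L^+,L^-)$ live in the $2|\mathcal{I}|$-dimensional space $V_h(T)\times V_h(T)$. It therefore suffices to exhibit $2|\mathcal{I}|$ linear functionals which form a unisolvent set and to check that $W^\pm$ and $L^\pm$ agree under each of them.

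The natural choice consists of the $|\mathcal{I}|$ edge-means $N_j$, interpreted for a piecewise function $(p^+,p^-)$ as $|e_j|^{-1}\bigl(\int_{e_j^+} p^+\,ds+\int_{e_j^-} p^-\,ds\bigr)$, together with the jump functionals $[\![\cdot]\!](D)$, $[\![\beta_c^\pm \nabla(\cdot)\cdot\mathbf{n}_h]\!](x_p)$, $[\![\nabla(\cdot)\cdot\mathbf{t}_h]\!](x_p)$, and (for the rotated-$Q_1$ case) $[\![d(\cdot)]\!]$. This gives exactly $|\mathcal{I}|+3$ or $|\mathcal{I}|+4$ functionals, matching the dimension. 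By their defining conditions, the functions $\phi_i$ (Lemma~\ref{lem_uniq}), $\Psi$, $\Upsilon_1$, $\Upsilon_2$, and $\Theta$ (Lemma~\ref{lem_jiest}) form a dual basis to this family: this simultaneously establishes unisolvence and shows that applying each functional to $L^\pm$ picks off exactly the corresponding coefficient in (\ref{fenjie1}).

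The remaining task is to apply each functional to $W^\pm$ and confirm the claimed value. For the jump-type functionals, the contribution from $(I_h^{\rm IFE}v)^\pm$ vanishes: continuity along $\Gamma_h\cap T$ from (\ref{shape2.1}) kills $[\![(I_h^{\rm IFE}v)^\pm]\!](D)$ and $[\![\nabla(I_h^{\rm IFE}v)^\pm\cdot\mathbf{t}_h]\!](x_p)$; condition (\ref{shape2}) kills the weighted normal jump at $x_p$; and the rotated-$Q_1$ condition in Remark~\ref{remark_cc} gives $[\![d((I_h^{\rm IFE}v)^\pm)]\!]=0$. Thus the three (or four) jump functionals applied to $W^\pm$ reduce to the jumps of $I_h v_E^\pm$, which are exactly $a,b_1,b_2$ (and $t$), as defined in (\ref{fenjie2}). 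For the $N_j$ functional, the interpolation identity $N_j(I_h^{\rm IFE}v)=N_j(v)$ combined with $v|_{\Omega^s}=v_E^s$ on $e_j^s$ gives $N_j(W)=N_j(I_h v_E^\pm)_{\text{piecewise}} - N_j(v)$, which upon regrouping is precisely $g_j$.

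The main subtlety is the bookkeeping in the $N_j$ step, which rests on the identification $e_j\cap T_h^\pm = e_j\cap T^\pm = e_j^\pm$. This identity holds because $\Gamma$ and $\Gamma_h$ intersect $\partial T$ at the same two points $D,E$, so the two partitions of the edge $e_j$ coincide. Once this observation is made, the discrete interpolation condition $N_j(I_h^{\rm IFE}v)=N_j(v)$ (which is phrased against the true function $v$ over $e_j$) can be rewritten as an equality of sums of integrals over $e_j^+$ and $e_j^-$ of the polynomial extensions, exactly matching the definition of $g_j$. With this, every functional check is routine and the identity follows.
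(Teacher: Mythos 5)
Your proposal is correct and follows essentially the same route as the paper: the paper defines the candidate combination with coefficients given by the jump functionals and edge means of $w_h=I_hv_E^\pm-(I_h^{\rm IFE}v)^\pm$, shows the difference lies in $S_h(T)$ with vanishing $N_i$, and invokes Lemma~\ref{lem_uniq}, which is exactly your dual-basis/unisolvence argument in $V_h(T)\times V_h(T)$ repackaged. Your functional checks (the jumps of $(I_h^{\rm IFE}v)^\pm$ vanishing, $N_j(I_h^{\rm IFE}v)=N_j(v)$ with $v|_{\Omega^s}=v_E^s$, and the coincidence $e_j\cap T_h^\pm=e_j\cap\Omega^\pm$ since $\Gamma$ and $\Gamma_h$ meet $\partial T$ at the same points) are the same computations the paper performs.
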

\begin{proof}
For simplicity, define a  function $w_h$ such that  
\begin{equation}\label{wh}
w_h|_{T_h^s}=w_h^s~\mbox{ with }~w_h^s=I_{h} v_E^s-(I_{h}^{\rm IFE}v)^s, ~s=+,-.
\end{equation} 
It is obvious that $w_h^s\in V_h(T)$, $s=+,-$.
Define another function
\begin{equation}\label{newadd1}
\begin{aligned}
v_h:= [\![w_h^\pm]\!](D)\Psi(x)&+[\![\beta_c^\pm \nabla w_h^\pm\cdot\textbf{n}_h]\!](x_p)\Upsilon_1(x)\\
&+[\![ \nabla w_h^\pm\cdot\textbf{t}_h]\!](x_p)\Upsilon_2(x)+\sum_{i\in\mathcal{I}} N_i(w_h) \phi_i(x)+[\![d(w_h^\pm)]\!]\Theta(x).
\end{aligned}
\end{equation}
Next, we  prove $w_h=v_h$.
From (\ref{basis}), (\ref{shape1})-(\ref{shape2}) and Remark~\ref{remark_cc}, the IFE basis functions $\phi_i$, $i\in\mathcal{I}$ satisfy the following identities 
\begin{equation}\label{pro_lem_deco1}
\begin{aligned}
&[\![\phi_i^\pm]\!](D)=0, ~[\![\beta_c^\pm\nabla \phi_i^\pm\cdot\textbf{n}_h]\!](x_p) =0, ~[\![\nabla \phi_i^\pm\cdot\textbf{t}_h]\!](x_p) =0, \\
&[\![d(\phi_i^\pm)]\!]=0,~N_j(\phi_i)=\delta_{ij},~ i,j\in\mathcal{I}.
\end{aligned}
\end{equation} 
Combining (\ref{newadd1}), (\ref{pro_lem_deco1}) and (\ref{def_Upsilon1})-(\ref{def_Theta2}), we find
\begin{equation}\label{new_tnc2}
[\![\beta_c^\pm \nabla v_h^\pm\cdot\textbf{n}_h]\!](x_p)=[\![\beta_c^\pm \nabla w_h^\pm\cdot\textbf{n}_h]\!](x_p),~~N_i(v_h)=N_i(w_h),\quad i\in\mathcal{I}
\end{equation}
and
\begin{equation}\label{new_tnc}
[\![v_h^\pm]\!](D)=[\![w_h^\pm]\!](D),~~[\![ \nabla v_h^\pm\cdot\textbf{t}_h]\!](x_p)=[\![ \nabla w_h^\pm\cdot\textbf{t}_h]\!](x_p),~~[\![d(v_h^\pm)]\!]=[\![d(w_h^\pm)]\!].
\end{equation}
It follows from (\ref{new_tnc}) that $v_h-w_h$ is continuous across $\Gamma_h\cap T$, which together with (\ref{new_tnc2}) and (\ref{shape2.1})-(\ref{shape2}), implies  
$$v_h-w_h\in S_h(T)~\mbox{ and }~N_i(v_h-w_h)=0~ \forall i\in\mathcal{I}.$$
Using the unisolvence of IFE shape functions (see Lemma~\ref{lem_uniq}),  we know that the function $v_h-w_h$ is unique and  $v_h-w_h=0$ through a simple  verification. Thus, from (\ref{newadd1}), we have 
\begin{equation}\label{pro_lem_deco2}
\begin{aligned}
w_h=v_h= a\Psi(x)&+b_1\Upsilon_1(x)+b_2\Upsilon_2(x)+\sum_{i\in\mathcal{I}} g_i \phi_i(x)+t\Theta(x),
\end{aligned}
\end{equation}
where
\begin{equation*}
a=[\![w_h^\pm]\!](D),~b_1=[\![\beta_c^\pm \nabla w_h^\pm\cdot\textbf{n}_h]\!](x_p),~b_2=[\![ \nabla w_h^\pm\cdot\textbf{t}_h]\!](x_p),~g_i=N_i(w_h),~t=[\![d(I_hv_E^\pm)]\!].
\end{equation*}
Using the following facts from (\ref{shape2.1})-(\ref{shape2})
\begin{equation*}
[\![ I_{h}^{\rm IFE}v)^\pm ]\!](D)=0, ~[\![ \beta^\pm_c \nabla (I_{h}^{\rm IFE}v)^\pm\cdot\textbf{n}_h ]\!](x_p)=0,~ [\![\nabla (I_{h}^{\rm IFE}v)^\pm\cdot \textbf{t}_h ]\!](x_p)=0,
\end{equation*}
we further have 
\begin{equation*}
\begin{aligned}
&a=[\![w_h^\pm]\!](D)=[\![I_{h} v_E^\pm-(I_{h}^{\rm IFE}v)^\pm]\!](D)=[\![I_{h} v_E^\pm]\!](D)-[\![(I_{h}^{\rm IFE}v)^\pm]\!](D)=[\![I_{h} v_E^\pm]\!](D),\\
&b_1=[\![\beta_c^\pm \nabla w_h^\pm\cdot\textbf{n}_h]\!](x_p)=[\![\beta_c^\pm \nabla (I_{h} v_E^\pm-(I_{h}^{\rm IFE}v)^\pm)\cdot\textbf{n}_h]\!](x_p)=[\![\beta_c^\pm \nabla (I_{h} v_E^\pm)\cdot\textbf{n}_h]\!](x_p),\\
&b_2=[\![ \nabla w_h^\pm\cdot\textbf{t}_h]\!](x_p)=[\![ \nabla (I_{h} v_E^\pm-(I_{h}^{\rm IFE}v)^\pm)\cdot\textbf{t}_h]\!](x_p)=[\![ \nabla (I_{h} v_E^\pm)\cdot\textbf{t}_h]\!](x_p).
\end{aligned}
\end{equation*}
It remains to  consider $g_i$, $i\in \mathcal{I}$. Define 
\begin{equation*}
I_h^{BK}v:=
\left\{
\begin{aligned}
I_hv_E^+&&\mbox{ in } T_h^+,\\
I_hv_E^-&&\mbox{ in } T_h^- ,
\end{aligned}\right.
\end{equation*}
then  we know from (\ref{wh}) that $w_h=I_h^{BK}v-I_h^{\rm IFE}v$. Using the fact that $N_i(v-I_h^{\rm IFE}v)=0$ on interface elements from (\ref{local_IFE_inter}), we obtain
\begin{equation*}
\begin{aligned}
g_i&=N_i(w_h)=N_i(I_h^{BK}v-I_h^{\rm IFE}v)=N_i(I_h^{BK}v-v+v-I_h^{\rm IFE}v)\\
&=N_i(I_h^{BK}v-v)+N_i(v-I_h^{\rm IFE}v)=N_i(I_h^{BK}v-v)\\
&=|e_i|^{-1}\left(\int_{e^+_i}(I_h v_E^+-v_E^+)ds+\int_{e^-_i}(I_h v_E^--v_E^-)ds\right).
\end{aligned}
\end{equation*}
\end{proof}

\subsection{Proof of Theorem~\ref{chazhi_error}}\label{app_theo1}
\begin{proof}
On each interface element $T\in\mathcal{T}_h^\Gamma$, by  the triangle inequality, we have
\begin{equation}\label{pro_main1}
|v_E^s-(I_h^{\rm IFE}v)^s|_{H^m(T)}\leq |v_E^s-I_h v_E^s|_{H^m(T)}+|I_h v_E^s -(I_h^{\rm IFE}v)^s|_{H^m(T)},\quad s=+,-.
\end{equation}
The estimate of the first term is standard
\begin{equation}\label{pro_main2}
| v_E^s-I_hv_E^s|^2_{H^m(T)}\leq Ch_T^{4-2m}| v_E^s|^2_{H^2(T)},\quad m=0,1,2,~ s=+,-.
\end{equation}
For the second term on the right-hand side of (\ref{pro_main1}),  from Lemma~\ref{lema_fenjie}, Lemma~\ref{lem_jiest} and (\ref{bounds_basis1}),  we have
\begin{equation}\label{pro_main3}
\begin{aligned}
&|I_hv_E^s-(I_h^{\rm IFE}v)^s|^2_{H^m(T)}\\
&\leq C\left(a^2|\Psi^s|^2_{H^m(T)}+\sum_{i=1,2}b_i^2|\Upsilon_i^s|^2_{H^m(T)}+\sum_{i\in\mathcal{I}} g_i^2|\phi^s_i|^2_{H^m(T)}+t^2|\Theta^s|^2_{H^m(T)}\right)\\
&\leq Ch_T^{2-2m}\left(a^2+\sum_{i\in\mathcal{I}} g_i^2\right) +Ch_T^{4-2m}\sum_{i=1,2}b_i^2+Ch_T^{6-2m}t^2,\quad m=0,1,2.
\end{aligned}
\end{equation}
Here the constants $t$, $a$, $b_1$, $b_2$  and $g_i$, $i\in\mathcal{I}$ are defined in (\ref{fenjie2}). We now estimate these constants one by one. 
Firstly, (\ref{def_d}) implies 
\begin{equation}\label{est_t}
\begin{aligned}
t^2&=\left|[\![d(I_hv_E^\pm)]\!]\right|^2=\frac{1}{(4\kappa_T^4+4)|T|}|[\![I_hv_E^\pm]\!]|^2_{H^2(T)}\leq Ch_T^{-2}\sum_{s=+,-}|I_hv_E^s|^2_{H^2(T)}\\
&\leq Ch_T^{-2}\sum_{s=+,-}\left(|v_E^s|^2_{H^2(T)}+|v_E^s-I_hv_E^s|^2_{H^2(T)}\right)\leq Ch_T^{-2}\sum_{s=+,-}|v_E^s|^2_{H^2(T)}.
\end{aligned}
\end{equation}
Since $v\in \widetilde{H}^2(\Omega)=0$, we have  $[\![v_E^\pm]\!]^2(D)$, which leads to 
\begin{equation}\label{pro_a}
\begin{aligned}
a^2&=[\![ I_{h} v_E^\pm ]\!]^2(D)=[\![ I_hv_E^\pm-v^\pm_E]\!]^2(D)\leq \|[\![I_hv_E^\pm-v^\pm_E]\!]\|_{L^\infty(T)}^2\\
&\leq C\|I_hv_E^+-v_E^+\|^2_{L^\infty(T)}+C\|I_hv_E^--v_E^-\|^2_{L^\infty(T)}\leq Ch_T^2(|v_E^+|^2_{H^2(T)}+|v_E^-|^2_{H^2(T)}),
\end{aligned}
\end{equation}
where we used Sobolev’s inequality, scaling argument and the standard interpolation error estimate in the last inequality.

%the standard interpolation error estimate is used in the last inequality since $H^2(T)\hookrightarrow C^0(\overline{T})$; see Theorem 4.4.20 in \cite{brenner2008mathematical}.

For the constant $b_1$, using the standard inverse inequality, we have
\begin{equation}\label{pro_b10}
\begin{aligned}
b_1^2&=[\![\beta_c^\pm \nabla (I_{h} v_E^\pm )\cdot \textbf{n}_h]\!]^2(x_p)\leq \|[\![\beta_c^\pm \nabla (I_{h} v_E^\pm )\cdot \textbf{n}_h]\!]\|^2_{L^\infty(T)}\leq Ch_T^{-2} \|[\![\beta_c^\pm \nabla (I_{h} v_E^\pm )\cdot \textbf{n}_h]\!]\|^2_{L^2(T)}\\
&\leq Ch_T^{-2} \|[\![\beta^\pm \nabla (I_{h} v_E^\pm )\cdot \textbf{n}_h]\!]\|^2_{L^2(T)}+Ch_T^{-2} \|[\![(\beta_c^\pm-\beta^\pm)\nabla (I_{h} v_E^\pm )\cdot \textbf{n}_h]\!]\|^2_{L^2(T)}.
\end{aligned}
\end{equation}
From (\ref{shape_condi}), the second term can be estimate as 
\begin{equation*}
\begin{aligned}
h_T^{-2}& \|[\![(\beta_c^\pm-\beta^\pm)\nabla (I_{h} v_E^\pm )\cdot \textbf{n}_h]\!]\|^2_{L^2(T)}\leq h_T^{-2}\sum_{s=+,-}\|\beta_c^s-\beta^s\|^2_{L^\infty(T)}\|\nabla (I_{h} v_E^s )\cdot \textbf{n}_h\|^2_{L^2(T)}\\
&\leq C\sum_{s=+,-}\left(\|\nabla (I_{h} v_E^s )\cdot \textbf{n}_h-\nabla v_E^s \cdot \textbf{n}_h\|^2_{L^2(T)}+\| \nabla v_E^s \cdot \textbf{n}_h\|^2_{L^2(T)}\right)\leq \sum_{s=+,-}|v_E|^2_{H^1(T)}.
\end{aligned}
\end{equation*}
%Using the fact $[\![ \beta^\pm\nabla v_E^\pm\cdot \textbf{n}]\!]=0$, t
For the first term on the right-hand side of (\ref{pro_b10}), using (\ref{error_nt}), we get
\begin{equation*}
\begin{aligned}
 &h_T^{-2}\|[\![\beta^\pm \nabla (I_{h} v_E^\pm )\cdot \textbf{n}_h]\!]\|^2_{L^2(T)} = h_T^{-2}\|[\![\beta^\pm \nabla (I_{h} v_E^\pm -v_E^\pm )\cdot \textbf{n}_h+\beta^\pm \nabla v_E^\pm \cdot (\textbf{n}_h-\textbf{n}+\textbf{n})]\!]\|^2_{L^2(T)}\\
 &\leq Ch_T^{-2}\left(\|[\![\beta^\pm \nabla (I_{h} v_E^\pm -v_E^\pm )\cdot \textbf{n}_h]\!]\|^2_{L^2(T)}+\|\textbf{n}-\textbf{n}_h\|^2_{L^\infty(T)}\|[\![\beta^\pm \nabla v_E^\pm ]\!]\|^2_{L^2(T)}+ \|[\![ \beta^\pm\nabla v_E^\pm\cdot \textbf{n}]\!]\|^2_{L^2(T)}\right)\\
 &\leq C\sum_{s=+,-}\left(|v_E^s|^2_{H^2(T)}+|v_E^s|^2_{H^1(T)}\right)+Ch_T^{-2}\|[\![ \beta^\pm\nabla v_E^\pm\cdot \textbf{n}]\!]\|^2_{L^2(T)}.
 \end{aligned}
\end{equation*}
Combining above three inequalities yields
\begin{equation}\label{pro_b1}
b_1^2\leq  C\sum_{s=+,-}\left(|v_E^s|^2_{H^2(T)}+|v_E^s|^2_{H^1(T)}\right)+Ch_T^{-2}\|[\![ \beta^\pm\nabla v_E^\pm\cdot \textbf{n}]\!]\|^2_{L^2(T)}.
\end{equation}
Analogously, 
\begin{equation}\label{pro_b2}
\begin{aligned}
&b_2^2=[\![ \nabla (I_{h} v_E^\pm )\cdot \textbf{t}_h]\!]^2(x_p)\leq \|[\![ \nabla (I_{h} v_E^\pm )\cdot \textbf{t}_h]\!]\|^2_{L^\infty(T)}\leq Ch_T^{-2}\|[\![ \nabla (I_{h} v_E^\pm )\cdot \textbf{t}_h]\!]\|^2_{L^2(T)}\\
&\leq Ch_T^{-2}\|[\![ \nabla (I_{h} v_E^\pm-v_E^\pm )\cdot \textbf{t}_h+ \nabla v_E^\pm \cdot (\textbf{t}_h-\textbf{t}+\textbf{t})]\!]\|^2_{L^2(T)}\\
&\leq Ch_T^{-2}\left(\|[\![ \nabla (I_{h} v_E^\pm-v_E^\pm )\cdot \textbf{t}_h]\!]\|^2_{L^2(T)}+\|\textbf{t}_h-\textbf{t}\|^2_{L^\infty(T)} \|[\![\nabla v_E^\pm]\!] \|^2_{L^2(T)}+\|[\![\nabla v_E^\pm\cdot\textbf{t}]\!]\|^2_{L^2(T)} \right)\\
&\leq C\sum_{s=+,-}\left(|v_E^s|^2_{H^2(T)}+|v_E^s|^2_{H^1(T)}\right)+Ch_T^{-2}\|[\![\nabla v_E^\pm\cdot\textbf{t}]\!]\|^2_{L^2(T)}.
\end{aligned}
\end{equation}
Finally, the constants $g_i$, $i\in\mathcal{I}$ in (\ref{fenjie2}) can be estimated by the  Cauchy-Schwarz inequality and the standard interpolation error estimate 
\begin{equation}\label{pro_di}
\begin{aligned}
g_i^2&= |e_i|^{-2}\left|\int_{e^+_i}(I_h v_E^+-v_E^+)ds+\int_{e^-_i}(I_h v_E^--v_E^-)ds\right|^2\leq Ch_T^{-1}\sum_{s=+,-}\|I_hv_E^s-v_E^s\|^2_{L^2(e_i)}\\
&\leq C\sum_{s=+,-}\left(h_T^{-2}\|I_hv_E^s-v_E^s\|^2_{L^2(T)}+\|\nabla(I_hv_E^s-v_E^s)\|^2_{L^2(T)}\right)\leq Ch_T^2\sum_{i=+,-}|v_E^s|^2_{H^2(T)}.
\end{aligned}
\end{equation}

We now combine (\ref{pro_main1})-(\ref{pro_a}), (\ref{pro_b1})-(\ref{pro_di}) to obtain the error estimate on interface elements
%
% (\ref{pro_main2}), (\ref{pro_main3}), (\ref{est_t}), (\ref{pro_a}), (\ref{pro_b1}), (\ref{pro_b2}) and (\ref{pro_di}) to  obtain the error estimate on interface elements
\begin{equation*}
\begin{aligned}
h_T^{2(m-1)}|v_E^s-(I_h^{\rm IFE}v)^s|^2_{H^m(T)}&\leq Ch_T^{2}\sum_{s=\pm}\|v_E^s\|^2_{H^2(T)}+C
\left(\|[\![ \beta^\pm\nabla v_E^\pm\cdot \textbf{n}]\!]\|^2_{L^2(T)}+\|[\![\nabla v_E^\pm\cdot\textbf{t}]\!]\|^2_{L^2(T)}\right).\end{aligned}
\end{equation*}
Summing up the estimate over all interface elements  $T\in\mathcal{T}_h^\Gamma$ and using Assumption~(\ref{assumption_delta}), we get
\begin{equation}\label{pro_main_ls}
\begin{aligned}
\sum_{T\in\mathcal{T}^\Gamma_h}h_T^{2(m-1)}|v_E^s&-(I_h^{\rm IFE}v)^s|^2_{W^m_2(T)}\leq Ch_\Gamma^{2}\|v_E^s\|^2_{H^2(\Omega)}+C
\left((\|[\![ \beta^\pm\nabla v_E^\pm\cdot \textbf{n}]\!]\|^2_{L^2(U(\Gamma,h_\Gamma))}\right.\\
&\qquad+\left.\|[\![\nabla v_E^\pm\cdot\textbf{t}]\!]\|^2_{L^2(U(\Gamma,h_\Gamma))}\right),~~s=+,-,~~m=0,1,2.
\end{aligned}
\end{equation}
Since $v\in \widetilde{H}^2(\Omega)$, we know from the definition (\ref{def_H2}) that $[\![\beta^\pm \nabla v_E^\pm \cdot \textbf{n}]\!](x)=0$ and $[\![ v_E^\pm ]\!](x)=0$  for all $x\in \Gamma$, which also implies $[\![\nabla v_E^\pm \cdot \textbf{t}]\!](x)=0$ on $\Gamma$.  Thus, by Lemma~\ref{strip},  and (\ref{ext_beta})-(\ref{new_vari_deri}),
\begin{equation*}
\begin{aligned}
&\left\|[\![\beta^\pm \nabla v_E^\pm\cdot \textbf{n}]\!]\right\|_{L^2(U(\Gamma,h_\Gamma))}^2\leq Ch_\Gamma^2\left|[\![\beta^\pm \nabla v_E^\pm\cdot \textbf{n}]\!]\right|_{H^1(U(\Gamma,h_\Gamma))}^2\leq Ch_\Gamma^2\sum_{s=+,-}\|v_E^s\|^2_{H^2(\Omega)},\\
&\left\|[\![\nabla v_E^\pm\cdot \textbf{t}]\!]\right\|_{L^2(U(\Gamma,h_\Gamma))}^2\leq Ch_\Gamma^2\left|[\![\nabla v_E^\pm\cdot \textbf{t}]\!]\right|_{H^1(U(\Gamma,h_\Gamma))}^2\leq Ch_\Gamma^2\sum_{s=+,-}\|v_E^s\|^2_{H^2(\Omega)}.
\end{aligned}
\end{equation*}
Substituting the above inequalities   into (\ref{pro_main_ls}) and using the extension result (\ref{extension}) we complete the proof.
\end{proof}

\subsection{Proof of Theorem~\ref{theorem_interpolation}}\label{app_theo2}
\begin{proof}
On each non-interface element $T\in\mathcal{T}_h^{non}$,  the following  estimate is standard
\begin{equation}\label{pro_int2a}
|v-I_h^{\rm IFE}v|^2_{H^m(T)}=|v-I_hv|^2_{H^m(T)}\leq Ch_T^{4-2m}|v|^2_{H^2(T)},\quad m=0,1.
\end{equation}
On each interface element $T\in\mathcal{T}_h^\Gamma$,  in view of the relations $T=T^+\cup T^-$ and $T^s=(T^s\cap T_h^+)\cup (T^s\cap T_h^-)$, $s=+,-$, we have
\begin{equation}\label{pro_int3a}
\begin{aligned}
|v-I_h^{\rm IFE}v|^2_{H^m(T)}&= \sum_{s=+,-}|v^s-(I_h^{\rm IFE}v)^s|^2_{H^m(T^s\cap T_h^s)}\\
&+|v^--(I_h^{\rm IFE}v)^+|^2_{H^m(T^-\cap T_h^+)}+|v^+-(I_h^{\rm IFE}v)^-|^2_{H^m(T^+ \cap T_h^-)}.
\end{aligned}
\end{equation}
By the triangle inequality, 
\begin{equation}\label{pro_int4}
\begin{aligned}
|v^--(I_h^{\rm IFE}v)^+|^2_{H^m(T^-\cap T_h^+)}\leq 2|v^--v_E^+|^2_{H^m(T^-\cap T_h^+)}+2|v_E^+-(I_h^{\rm IFE}v)^+|^2_{H^m(T^-\cap T_h^+)},\\
|v^+-(I_h^{\rm IFE}v)^-|^2_{H^m(T^+\cap T_h^-)}\leq 2|v^+-v_E^-|^2_{H^m(T^+\cap T_h^-)}+ 2|v_E^--(I_h^{\rm IFE}v)^-|^2_{H^m(T^+\cap T_h^-)}.
\end{aligned}
\end{equation}
Substituting (\ref{pro_int4}) into (\ref{pro_int3a}) and using the definition (\ref{tri}), we get
\begin{equation}\label{pro_int5}
|v-I_h^{\rm IFE}v|^2_{H^m(T)}\leq  C\sum_{s=+,-}|v^s-(I_h^{\rm IFE}v)^s|^2_{H^m(T)}+C|v_E^+-v_E^-|^2_{H^m(T^\triangle)},~m=0,1.
\end{equation}
It follows from Lemma~\ref{lem_h3} and the fact $[\![v_E^\pm]\!]=0$ on $\Gamma\cap T$ that 
\begin{equation*}
\begin{aligned}
&\|v_E^+-v_E^-\|^2_{L^2(T^\triangle)}\leq Ch_T^4\left|[\![v_E^\pm]\!] \right|^2_{H^1(T^\triangle)}\leq Ch_T^4\sum_{s=+,-}|v_E^s|^2_{H^1(T)},\\
&\|\nabla (v_E^+-v_E^-)\|^2_{L^2(T^\triangle)}\leq C\left(h_T^2\|\nabla [\![v_E^\pm]\!] \|^2_{L^2(\Gamma\cap T)}+h_T^4\left|[\![v_E^\pm]\!] \right|^2_{H^2(T^\triangle)}\right)\\
&\quad\qquad\qquad\qquad\leq Ch_T^2\sum_{s=+,-}\|\nabla v_E^s\|^2_{L^2(\Gamma\cap T)}  +Ch_T^4\sum_{s=+,-}|v_E^s|^2_{H^2(T)},
\end{aligned}
\end{equation*}
which implies
\begin{equation}\label{pro_int6}
|v_E^+-v_E^-|^2_{H^m(T^\triangle)}\leq Ch_T^{4-2m}\sum_{s=+,-}\left(|v_E^s|^2_{H^1(T)}+\|\nabla v_E^s\|^2_{L^2(\Gamma\cap T)}\right),~m=0,1.
\end{equation}
Combining (\ref{pro_int2a}), (\ref{pro_int5}) and (\ref{pro_int6}), we arrive at
\begin{equation*}
\begin{aligned}
&\sum_{T\in\mathcal{T}_h}|v-I_h^{\rm IFE}v|^2_{H^m(T)}\leq Ch^{4-2m}\sum_{T\in\mathcal{T}_h^{non}}|v|^2_{H^2(T)}+C\sum_{T\in\mathcal{T}_h^\Gamma}\sum_{s=+,-}|v^s-(I_h^{\rm IFE}v)^s|^2_{H^m(T)}\\
&\qquad\qquad\qquad+Ch^{4-2m}\sum_{T\in\mathcal{T}_h^\Gamma}\sum_{s=+,-}|v_E^s|^2_{H^1(T)}+Ch^{4-2m}\|\nabla v_E^s\|^2_{L^2(\Gamma)} \quad m=0,1,
\end{aligned}
\end{equation*}
which together with Theorem~\ref{chazhi_error}, the extension result (\ref{extension}) and the following global trace inequality
\begin{equation}\label{pro_lemmain8}
\sum_{s=+,-} \|\nabla v_E^s\|^2_{L^2(\Gamma)}\leq C(\|v_E^+\|^2_{H^2(\Omega^+)}+\|v_E^-\|^2_{H^2(\Omega^-)})
\end{equation}
yields the theorem.\end{proof}

\bibliographystyle{plain}
\bibliography{refer}

\end{document}